\DeclareMathOperator*{\esssup}{ess ~sup}         %
\newcommand{\Om}{0,L}
\newcommand{\R}{\mathbb{R}}
\newcommand{\dt}{\,\textup{d}t}
\newcommand{\ds}{\,\textup{d}s}
\newcommand{\dx}{\,\textup{d}x}
\newcommand{\dy}{\,\textup{d}y}
\newcommand{\eps}{\varepsilon}
\renewcommand{\l}{\left}
\renewcommand{\r}{\right}
\numberwithin{equation}{section}
\newcommand{\bbf}{\boldsymbol{f}}
\newcommand\avg[1]{\{\!\!\{#1\}\!\!\}}
\newcommand\jump[1]{\llbracket#1\rrbracket}
\providecommand{\cref@override@label@type}[1]{}
\crefname{hypothesis}{Hypothesis}{Hypotheses}
\title{A free boundary model for transport induced neurite growth}
\author{Greta Marino$^{\dagger,}$\thanks{ Universit\"{a}t Augsburg, Institut f\"ur Mathematik, Universit\"{a}tsstra\ss e 12a, 86159 Augsburg, Germany
  (\email{greta.marino@uni-a.de, jan-f.pietschmann@uni-a.de}).}
\and Jan-Frederik Pietschmann$^{*,}$\thanks{Centre for Advanced Analytics and Predictive Sciences (CAAPS), University of Augsburg,
Universit\"{a}tsstr. 12a, 86159 Augsburg, Germany.}
\and Max Winkler\thanks{Technische Universit\"{a}t Chemnitz, Fakult\"at f\"ur Mathematik, Reichenhainer Stra\ss e 41, 09126 Chemnitz, Germany
  (\email{max.winkler@mathematik.tu-chemnitz.de}).}}
\begin{document}

\maketitle

% REQUIRED
\begin{abstract}
We introduce a free boundary model to study the effect of vesicle transport onto neurite growth. 
It consists of systems of drift-diffusion equations describing the evolution of the density of antero- and retrograde vesicles in each neurite coupled to reservoirs located at the soma and the growth cones of the neurites, respectively. 
The model allows for a change of neurite length as a function of the vesicle concentration in the growth cones.
After establishing existence and uniqueness for the time-dependent problem, we briefly comment on possible types of stationary solutions.
Finally, we provide numerical studies on biologically relevant scales using a finite volume scheme. We illustrate the capability of the model to reproduce cycles of extension and retraction.
\end{abstract}

% REQUIRED
\begin{keywords}
neurite growth, vesicle transport, free boundary problems, finite volumes, partial differential equations
\end{keywords}
\vskip-.5em
% REQUIRED
\begin{MSCcodes}
92-08, 92C20, 35R35, 35K45, 35A05
\end{MSCcodes}

\section{Introduction}
Mature neurons are highly polarized cells featuring functionally distinct compartments, the axons and the dendrites. {Axons are ``cables'' that have the ability to transmit electrical signals to other neurons and can extend up to a length of one meter in humans. Dendrites form complex tree-like structures and act as recipients for axons of other neurons.} This polarity is established during the maturing proceess as initially, newborn neurons  feature several undifferentiated extensions of similar length called neurites that are highly dynamic \cite{Cooper2013,Hatanaka2012}.
Eventually, one of these neurites is selected to become the axon. {This is often called neurite outgrowth.}
{The understanding of this process is still incomplete, despite progress in characterizing the role of molecular mechanisms as well as influence of intra- and extracellular signaling molecules, see \cite{takanoNeuronalPolarization2015} for more details and further references. \\
In this work, we focus on a single aspect of this process, namely the fact that the }actual growth or shrinkage of neurites is due to the insertion or retraction of vesicles (i.e., circular structures composed of lipid membranes) at the outer tips  of the neurites (growth cones). 
The vesicles themselves are produced in the cell body (soma) and then form complexes with motor proteins that allow for active transport along microtubules. The direction of transport is determined by the type of motor protein: kinesin results in anterograde transport (into the growth cones) while dynein motors move vesicles retrogradely to the soma. Both kinesins and dyneins are present on vesicles during their transport along microtubules, but only one of them is usually active at any given time \cite{enclada_stable_2011, twelvetrees_dynamic_2016}, see Figure \ref{fig:DevelopingNeuron} for a sketch.
%Thus, anterogradely transported vesicles provide a pool of dyneins that can mediate the retrograde transport.
The actual increase of the surface area of the plasma membrane is then due to the insertion of vesicles into the growth cone (exocytosis). Retraction, on the other hand, is accompanied by the removal of membrane material from  the growth cone through endocytosis \cite{Pfenninger2009, pfenninger_regulation_2003, tojima_exocytic_2015}.
{Clearly, the (dis-)assembly of microtubules during growth and retraction is important, yet we neglect this effect in the present study in order to not further complicate the model and as we are primarily interested in the role of vesicle transport. Addition of microtubule dynamics is postponed to future work.}
\begin{figure}
	\centering
	\includegraphics[width=0.35\textwidth]{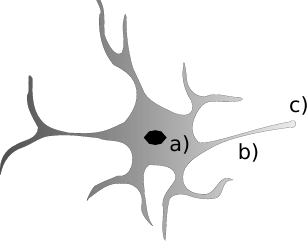}
	\caption{%\bb{(Updated Figure)} 
	Sketch of a developing neuron. Here a) represents the cell nucleus/soma where vesicles are produced, b) a neurite and c) a  growth cone, i.e., the location where vesicles are inserted/removed into the cell membrane.
    }
	\label{fig:DevelopingNeuron}
\end{figure}

%The nascent axon shows a higher number of organelles compared to the other neurites due to a {polarization} of intracellular transport to provide sufficient material for extension \cite{bradke_neuronal_1997, schelski_neuronal_2017}. The net flow of {vesicle}s into a neurite, thus, has to be regulated depending on changes in neurite length but it is not known how intracellular transport is adjusted to differences in the demand for {vesicle}s in growing or retracting neurites. 
\subsection{Relation to existing work}
While there are different models for the underlying biochemical processes of selecting the neurite which eventually becomes the axon (see also \cite{Oliveri2022} for a recent review), mathematical models examining the role of vesicle transport in this process are relatively scarce. 
{On the other hand, there are several models for molecular motor based transport, also in axons, \cite{Smith2001ModelsOM,Friedman2005,Friedman2006,Newby2010,Bressloff2015_democracy}. All these models feature linear transport terms which do not take into account size exclusion or finite volume effects. 
Our starting point is a model with non-linear transport terms proposed in \cite{Bressloff2016_exclusion} which, again, focuses on transport in a grown axon.} In particular in \cite{Bressloff2016_exclusion}, a limited transport capacity inside the neurites is taken into account by size exclusion effects and antero- and retrogradely moving particles are modelled separately. {We will use this approach as a basis for the transport within the neurites in our model.}
%Furthermore, the authors distinguish between bound motor-vesicles complexes and \emph{empty} motor complexes. 
%
In \cite{Humpert2021_transport} a similar approach is taken, yet on a microscopic particle level. Furthermore, \cite{Humpert2021_transport} extends the model by coupling two copies of it to pools representing the amount of vesicles present at the soma and growth cones, respectively. 
The aim of this paper is to introduce a macroscopic model in the spirit of both \cite{Humpert2021_transport,Bressloff2016_exclusion}, yet additionally allowing the length of the respective neurites to change. Different to \cite{Bressloff2016_exclusion} (see also \cite{Burger2010cross}), our model will have linear diffusion but non-linear transport terms. Such a model can also be justified as limit of a discrete lattice model, see \cite{KourbaneHoussene2018_exact,Bruna2022}. We are able to show that the solution stays within a given interval (usually taken to be $[0,1]$) so that the size exclusion property is preserved. Then, these equations which model transport inside the neurons are, as in \cite{Humpert2021_transport}, coupled to ordinary differential equations for the evolution of the vesicle concentration at soma and tip, respectively. One of the main novelties is then to add a mechanism which allows for growth or shrinkage of the neurites depending on how many vesicles are present in the growth cones. {Such free boundary models for neuron development have previously mostly been studies in the context of microtubule assembly, see \cite{McLean2004,diehl2014one,graham2006dynamics}. These models focus on a single neurite in which transport of microtubules is modelled again by a linear diffusion advection equation on a domain of varying length. This is then coupled to an ODE at one end of the domain accounting for the extension/retraction due to the microtubules. This coupling is sometimes performed via Dirichlet condition. Closer to our approach is the coupling through flux (Robin) type boundary condition as in \cite{Bressloff2015_democracy}. However, in this work, the authors only assume a linear relation for the boundary terms contrary to our study.}

\subsection{Contribution and outline}
We make the following contributions
%\begin{itemize}
\,

\noindent $\bullet$ Based on \cite{Humpert2021_transport,Bressloff2016_exclusion}, we introduce a macroscopic model for vesicle transport in developing neuron cells that includes multiple neurites, coupled with ODEs for the vesicle concentration in soma and growth cones. { We use a non-linear transport mechanism to include finite size effects extending paradigm used in most of the previous models.}

\noindent $\bullet$ We add a mechanism that allows for a change of neurite length depending on the respective vesicle concentration, which renders the model a free boundary problem.

\noindent $\bullet$ We rigorously prove  existence and uniqueness of solutions, including box constraints corresponding to size exclusion effects due to the finite volume of vesicles.

%\noindent $\bullet$ We briefly discuss on  stationary states.

\noindent $\bullet$ We provide a finite volume discretization that preserves the box constraints.

\noindent $\bullet$ We perform a scaling of the model to biological reasonable regimes and then give some numerical experiments illustrating different behaviours of the model, in particular cycles of expansion and retraction as observed in experiments.
%\end{itemize}

\medskip

The  paper is organised as follows. In Section~\ref{sect2} we present our model in detail. Section \ref{sect3} contains some preliminaries and is then devoted to  weak solutions, while Section~\ref{sect4} contains a brief discussion on (constant) stationary solutions. Section~\ref{sect5} provides a  finite volume scheme, a non-dimensionalization together with the introduction of biologically relevant  scales. Section \ref{sect6} is devoted to the numerical studies. Finally, Section~\ref{sect7} provides a brief conclusion and outlook.

\section{Mathematical model}
\label{sect2}

In this section we present a mathematical model for the growth process based on the principles stated in the introduction. For the reader's convenience, we will focus on the case of a two neurites connected to the soma, pointing out that the generalization to multiple neurites is straightforward. For $j=1,2$, the unknowns of our model read as follows:

%    \begin{itemize}
        \noindent $\bullet$ $L_j(t)$  denotes the length of the respective neurite at time $t$;
        
        \noindent $\bullet$ $f_{+,j}(t,x)$ and $f_{-,j}(t,x)$  denote the density of motor-protein complexes in the neurite $j$ that move towards the growth cone (anterograde direction) and towards the soma (retrograde direction), respectively;

%        \item $\bbf_{j} = (f_{+,j},f_{-,j})^\top$, $j=1,2$;
        
        \noindent $\bullet$ $\Lambda_{\text{som}}(t)$ is the amount of vesicles present in the soma at time $t$;
        
        \noindent $\bullet$ $\Lambda_j(t)$  is the amount of vesicles present in the tip of each neurite at time $t$.
%    \end{itemize}
    
\,
    
\begin{figure}[h]
	\centering
    \def\svgwidth{\textwidth}
	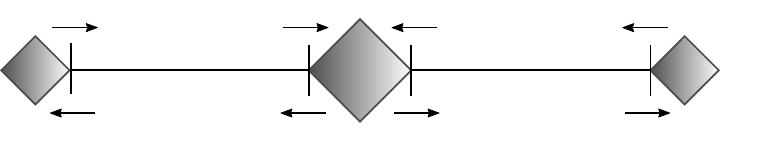
	\caption{{Sketch of the model neuron}: it consists of two neurites modelled by two intervals $(0, L_1(t))$ and $(0, L_2(t))$. The squares correspond to pools where vesicles can be stored. More precisely, the pool in the middle corresponds to the soma while the others stand for the corresponding growth cones. The interaction between neurites and pools is realised via boundary fluxes and the parameters governing their respective strength are displayed along with arrows of the transport direction. For an easy visualisation, $(0, L_1(t))$ is illustrated as a mirrored copy of $(0, L_2(t))$.}
%\textbf{Sketch of the discrete model neuron that is used for the numerical simulations}}
	\label{fig:DiscreteModel}
\end{figure}

The complete model consists of equations governing the dynamics inside each neurite, coupled with ODEs at the soma and growth cones, respectively, as well as with equations accounting for the change of the neurites lengths, see Figure \ref{fig:DiscreteModel} for an illustration of the couplings. We will discuss each component separately.

\medskip
    
\noindent 1. \emph{Dynamics within the neurites.} Let $v_0>0$  be the velocity of vesicles as they move along neurites and let  $\rho_j = \rho_j(t,x) := f_{+,j}  + f_{-,j}$ be the total vesicle density,  $j= 1, 2$. We define the fluxes of antero- and retrogradely moving vesicle-motor complexes as
    \begin{equation}
    \begin{split}
    \label{eq:model}
    J_{+,j}&:= v_0\,f_{+,j}\,(1-  \rho_j)- D_T\,\partial_x f_{+,j} , \quad     J_{-,j}:= -v_0\,f_{-,j}\,(1-  \rho_j)- D_T\,\partial_x f_{-,j},
    \end{split}   
    \end{equation}
respectively, {where $D_T > 0$ is a positive diffusion constant.
%while $v_0 > 0$ denotes the average speed of vesicles. 
Let us emphasize again that compared to earlier models, \cite{Smith2001ModelsOM,Friedman2005,Friedman2006,Newby2010,Bressloff2015_democracy}, we include a non-linear transport term to account for finite size effects.} We assume additionally that the complexes can (randomly, possibly via dissociation) change their direction with a given rate $\lambda \ge 0$. We obtain the following drift-diffusion-reaction equations, a copy of which holds true in each neurite separately,
    \begin{align} \label{eq:model_cont}
    \begin{aligned}
    \partial_t f_{+,j}&= -\partial_x J_{+,j} + \lambda\, (f_{-,j} - f_{+,j}), \\
    \partial_t f_{-,j}&= -\partial_x J_{-,j} + \lambda\, (f_{+,j} - f_{-,j}),
    \end{aligned}
\quad \text{in } (0, T) \times (0, L_j(t)).
    \end{align}
Here, $L_j(t)$ is the current length of the domain and $T>0$ is a fixed final time. 
{Note that the constants $v_0$, $D_T$ and $\lambda$ do not depent on $j$ as they are related to the characteristics of the transport of vesicle-motor protein complexes which are the same in every neurite.}
\medskip

\noindent 2. \emph{Coupling to soma and pools.} We assume that all neurites are connected to the soma at the point $x=0$. There, we have the following effects:
%    \begin{itemize}

        \noindent $\bullet$ Retrograde vesicles leave the neurite and enter the soma with rate $\beta_{-,j}(\Lambda_{\text{som}})\, f_{-,j}$. {Here, the function $\beta_{-,j}$ allows for a control of incoming vesicles in terms of the available quantity in the soma. The intuition is that the soma is less likely to allow for incoming vesicles when it already contains a larger number of them.}
        
        \noindent $\bullet$ Anterograde vesicles can leave the soma and enter the lattice  
        with rate $\alpha_{+,j}(\Lambda_{\text{som}})\,g_{+,j}(f_{+,j}, f_{-,j})$ if there is enough space, i.e., if $\rho_j< 1$. {This is ensured by assuming that the non-negative function $g_{+,j}$ satisfies $g_{+,j}(f_{+,j}, f_{-,j}) = 0$ whenever $\rho_j = f_{+,j} + f_{-,j}=1$. The additional factor $\alpha_{+,j}(\Lambda_{\text{som}})$ reflects that the number of vesicles entering the neurite depends on the amount which is available within the soma. In particular, we ask for  $\alpha_{+,j}(0)=0.$}
        
%    \end{itemize}

At the point $x= L_j(t)$ the neurite is connected to its respective pool and we have:

%    \begin{itemize}
        \noindent $\bullet$ Anterograde vesicles leave the lattice and enter the pool with rate $\beta_{+,j}(\Lambda_j)\,f_{+,j}$.
        
        \noindent $\bullet$ Retrograde particles move from the pool into the neurite, once again only if space in the domain is available, with rate $\alpha_{-,j}(\Lambda_j)\,g_{-,j}(f_{+,j},f_{-,j})$. 
    %\end{itemize}
    {Here, the functions $\beta_{+,j}$ and $\alpha_{-,j}$ serve the same purpose as $\beta_{-,j}$ and $\alpha_{+,j}$ previously, yet with pool instead of soma.}
 Figure \ref{fig:DiscreteModel} provides a sketch of this situation.    This behaviour is implemented by imposing the following flux boundary conditions (for each neurite):
%
%\greta{it should be checked if the vesicles enter/leave the domains with the same rate}
    \begin{align}
     J_{+,j}(t,0) &= \alpha_{+,j}(\Lambda_{\text{som}}(t))\,g_{+,j}(\bbf_{j}(t,0)), \nonumber \\
    -J_{-,j}(t,0)  &= \beta_{-,j}(\Lambda_{\text{som}}(t))\,f_{-,j}(t,0), \nonumber \\
    J_{+,j}(t, L_j(t))- L'_j(t)\, f_{+,j}(t, L_j(t))  &= \beta_{+,j}(\Lambda_j(t))\,f_{+,j}(t, L_j(t)), \nonumber \\ 
    -J_{-,j}(t, L_j(t))+ L_j'(t)\, f_{-,j}(t, L_j(t)) &= \alpha_{-,j}(\Lambda_j(t))\,g_{-,j}{\bbf_{j}(t,L(t))}),     \label{boundary}
    \end{align}
     $j=1,2$,
for suitable functions $\alpha_{i,j},\,\beta_{i,j}$, and $g_{i,j}$, ${i= +,-}, j=1,2$,  whose properties will be specified later, and with the shortened notation $\bbf_j(\cdot \,, \cdot):= (f_{+,j}(\cdot \,, \cdot), f_{-,j}(\cdot \,, \cdot))$,  $j= 1, 2$.
{The additional terms on the left hand side of the boundary conditions at $L_j(t)$ in \eqref{boundary} account for the mass flux of vesicles that occurs when the length of the neurite changes. They are especially important in order to keep track of the total mass in the system, see also \cite{Portegies2010, Breden2021} for similar constructions.}
\medskip
    
\noindent 3. \emph{Dynamics of the free boundary.} We assume that the length of each neurite $L_j$ satisfies the following ordinary differential equation
    \begin{equation}
        \label{free-boundary}
        L'_j(t) = h_j(\Lambda_j(t), L_j(t)),  
    \end{equation}
 where $h_j$, $j= 1,2$, are smooth functions to be specified. We think of $h_j$ as functions that change sign at a critical concentration of $\Lambda_j$ (i.e., switch between growth or shrinkage), which may depend on the current length of the neurite itself (e.g. in order to stop shrinkage at a minimal length).

\medskip

\noindent 4. \emph{Dynamics in soma and growth cones.}  Finally, we  describe the change of number of vesicles in the
soma and the respective neurite growth cones, due to vesicles entering and leaving the pools. In addition, a production term is added at the soma, while for the growth cones we add terms that model the consumption or production of vesicles due to growth or shrinkage of the neurite, respectively. We obtain
     \begin{equation}
    \label{ode}
    \begin{aligned}
    \Lambda'_{\text{som}}(t) &=  \sum_{j=1,2}  \bigl( \beta_{-,j}(\Lambda_{\text{som}}(t))\,f_{-,j}(t,0) - \alpha_{+,j}(\Lambda_{\text{som}}(t))\,g_{+,j}(\bbf_{j}(t,0)) \bigr) + \gamma_{\text{prod}}(t), \\ %\quad &&\text{at } x= 0\\
     \Lambda'_{1}(t) &=  \beta_{+,1}(\Lambda_1(t))\,f_{+,1}(t, L_1(t))- \alpha_{-,1}(\Lambda_1(t))\, g_{-,1}(\bbf_{1}(t, L_1(t))) \\
     & \qquad  -\chi\, h_1(\Lambda_1(t), L_1(t)) , \\
    %&& \text{at } x= L_1(t)\\
    %\bb{\beta \fL - \alpha g(\fR, \fL, \Lambda_1)}, \\
    %\Lambda_2'(t) &= \bb{\beta \fL - \alpha g(\fR, \fL, \Lambda_2)}, \\
    \Lambda'_{2}(t) &= \beta_{+,2}(\Lambda_2(t))\,f_{+,2}(t, L_2(t))- \alpha_{-,2}(\Lambda_2(t))\,g_{-,2}(\bbf_{2}(t, L_2(t))) \\
    & \qquad  -\chi \,h_2(\Lambda_2(t), L_2(t)), %&& \text{at } x= L_2(t).
    \end{aligned}
    \end{equation}
    {where $\chi>0$ is a given parameter that has the units vesicles / length and describes the amount of vesicles needed for one unit of neurite length while $\gamma_{\rm prod}$ accounts for the amount of vesicles that are produced in the soma.}

\begin{remark} Note that, except for the influence of the growth term $\gamma_{\text{prod}}(t)$, the total mass is preserved. 
% We have  
% \begin{align*}
% \sum_{j=1,2} \left(\int_0^{L_j(t)} \rho_j(t,x)\;dx + \Lambda_j(t)\right) + \Lambda_{\text{som}}(t) = m_0 + \int_0^t \gamma_{\text{prod}}(s)\;ds,
% \end{align*}
% where, for given initial conditions $\rho_j^0 = f_{+,j}^0 + f_{-,j}^0$, $\Lambda_j^0$, $\Lambda_{\text{som}}^0$, $L_j^0$, we define the total initial mass as 
% \begin{align*}
% m_0 := \sum_{j=1,2} \left(\int_0^{L_j^0} \rho_j^0(x)\;dx + \Lambda_j^0\right) + \Lambda_{\text{som}}^0.
% \end{align*}
% \color{black}
% As figured out with Jan: When inserting the ODEs into the mass balance equation $d/dt\ m(t)=0$, the terms $c_j\,h_j(\Lambda_j(t), L_j(t))$ coming from the ODEs for $\Lambda_j$ will not cancel out. That's why we should add these terms to what we call ''mass``. The unit of $c_j$ should be [vesicles / mm]. I suggest to reformulate this remark as follows:
%
It is defined by the following quantity
\begin{equation*}
m(t) = \sum_{j=1,2}\left(\int_0^{L_j(t)} \rho_j(t,x)\dx + \Lambda_j(t) + \chi\,L_j(t)\right) + \Lambda_{\text{som}}(t),
\end{equation*}
where we emphasize that also the material of which the neurites are made of need to be taken into account which is done via the terms $\chi\,L_j(t)$.
%is constant over time provided that $\gamma_{\text{prod}}\equiv 0$. Note, that the vesicles in the neurites, the soma and the growth cones as well as the material the neurites are made of, contribute to the total mass. Taking into account the external source $\gamma_{\text{prod}}$ yields
Then, a formal calculation yields the following equation of the evolution of the total mass
\begin{equation*}
m(t) = m(0) + \int_0^t\gamma_{\text{prod}}(s)\,\mathrm ds.
\end{equation*}

\end{remark}

\color{black}

\section{Existence of weak solutions}
\label{sect3}

The aim of this section is to provide existence of a unique weak solution to the model \eqref{eq:model}--\eqref{ode}. Let us first give some preliminaries.
%define our notion of weak solution. 
%We point out that in the rest of the paper we will use the shortened expression  $\bbf_{j} = (f_{+,j},f_{-,j})$, $j=1,2$.

\subsection{Preliminaries}
Let $L>0$ and let $1 \le p< \infty$. We denote by $L^p(0,L)$ and $W^{1,p}(0,L)$ the usual Lebesgue and Sobolev spaces.
% equipped with the  norms
%     \[
%     \|u\|^p_{L^p(0,L)}= \int_0^L |u|^p \dx \,  \text{ and } \, \|u\|^p_{W^{1,p}(0,L)}= \int_0^L |\partial_x u|^p \dx+ \int_0^L |u|^p \dx,
%     \]
% respectively, and $p= \infty$ the norm of $L^\infty(\Om)$ is given by
% $
%     \|u\|_{L^\infty(0,L)}= \esssup_{(0,L)} |u|
% $.
For $p= 2$ we  write $H^1(\Om)$ instead of $W^{1,2}(\Om)$. Furthermore,  $H^1(\Om)'$ is the dual space of $H^1(\Om)$.
%, g all bounded, linear functionals $F\colon H^1(\Om) \to \R$. 

%On the boundary $\rand$ we use the zero-dimensional Hausdorff (surface) measure denoted by $\sigma_x$. Then in a natural way we can define the Lebesgue spaces $L^s(\rand)$, $1 \le s \le \infty$, with corresponding norms
 %   \[
  %  \|u\|_{L^s(\rand)}= \l(\intor |u|^s d \sigma_x\r)^{1/s}, \ 1 \le s< \infty, \qquad \|u\|_{L^\infty(\rand)}= \esssup_{\rand} |u|.
   % \]
%It is well known that there exists a unique linear, continuous map $\gamma\colon W^{1,p} \to L^{\infty}(\rand)$ known as the trace map such that $\gamma(u)= u \bigl|_{\rand}$ for all $u \in W^{1,p}(\Om) \cap C(\close)$. For the sake of notational simplicity we drop the use of $\gamma$, but it is understood that all the restrictions of the Sobolev functions $u \in W^{1,p}(\Om)$ on the boundary are defined in the sense of traces. 
%\mw{New version (1D):}

It is well known (see e.g. \cite{Adams}) that there exists a unique linear, continuous map $\Gamma\colon W^{1,p}(\Om)  \to \R$ known as the trace map such that $\Gamma(u)= u(0)$ for all $u \in W^{1,p}(\Om) \cap C([0,L])$. In addition, let us recall the following trace estimate \cite[Theorem 1.6.6]{BrennerScott2008}      \begin{equation}
\label{eq:trace_theorem}    
    |u(0)| \le C_{\text{e}}\,\|u\|_{L^2(\Om)}^{1/2}\,\|u\|_{H^1(\Om)}^{1/2}.
        \end{equation}
% together with the Poincar\'e and Young's inequalities one can conclude that for each $u\in H^1(\Omega)$ the estimate
%\begin{equation}
%    |u(0)| \le C_{\text{e}}(\varepsilon)\,\|u\|_{L^2(\Omega)} + \varepsilon\,\|\partial_y u\|_{L^2(\Omega)},\qquad \varepsilon>0,
%\end{equation}
%cf. 
%, where $C_{\text{e}}>0$ is the embedding constant. 
%
%
%Furthermore, the Sobolev embedding theorem guarantees the existence of a linear, continuous map $i\colon W^{1,p}(\Om) \to L^\infty(\Om)$, see for instance \cite{Adams}.
%
Let $T>0$ and let $(B, \|\cdot\|_B)$ be a Banach space. For every $1 \le r\le \infty$ we denote by $L^r(0,T; B)$ the Bochner space of all measurable functions $u\colon [0,T] \to B$ with finite norm
    $$
    \|u\|_{L^r(0,T; B)}:= 
    \begin{cases}
        \displaystyle \Big(\int_0^T \|u(t)\|_B^r \; \dt\Big)^{1/r}, &\text{if}\ r<\infty,\\
        \displaystyle \esssup_{0 \le t \le T} \|u(t)\|_B, &\text{if}\ r=\infty.
    \end{cases}
    $$
Finally,  $C([0, T]; B)$ contains all continuous functions $u\colon [0, T] \to B$ such that
    \[
    \|u\|_{C([0, T]; B)}:= \max_{0 \le t \le T} \|u(t)\|_B< \infty.
    \]
We refer to \cite{Evans} as a reference for Bochner spaces. 
For every $a \in \R$ we set $a^{\pm}:= \max\{\pm a, 0\}$ and for $u \in W^{1,p}(\Om)$ we define $u^\pm(\cdot):= u(\cdot)^\pm$ and will use the fact that $u^\pm \in W^{1,p}(0,L)$.

%We aim to analyze  the coupled ODE-PDE model \eqref{eq:model}--\eqref{free-boundary}. At first we show the well-posedness of the model, and to this end we introduce some assumptions on the nonlinearities.

%\begin{remark}
%\label{rem:g}

%We observe that hypotheses \emph{(H1)} and \emph{(H2)} imply the bounds $|\nabla g(s, t)| \le L_g$ and $|\nabla h(s, t)| \le L_h$, respectively, for every $(s, t) \in \R^2$. 
%\end{remark}

%We set $\bbf_j:= (f_{+,j}, f_{-,j})$ and say that $\bbf_j \in (L^2((0, T); H^1(\Omega_j(t))))^2$ with $\partial_t \bbf_j \in (L^2((0, T); (H^1(\Omega_j(t)))^{\prime}))^2$ is a weak solution to \eqref{eq:model}--\eqref{boundary} if
 %   \begin{equation}
  %      \label{weak}
   %     \begin{split}
    %        \int_{\Om_j(t)} \partial_t f_{+,j} \varphi_+ \dx 
     %       &= \int_{\Om_j(t)} \bigl(v_0 f_{+,j}(1- \phi \rho_j)- D_T \partial_x f_{+,j} \bigr) \partial_x \varphi_+ + \lambda (f_{-,j}- f_{+,j}) \varphi_+ \dx \\
      %      & \qquad+ \int_{\partial \Om_j(t)} \bigl(\beta_+ f_{+,j}- \alpha_+(\Lambda_{\text{som}}) g(f_{+,j}, f_{-,j}) \bigr)\varphi_+ \; d\sigma_x, \\
       %     \int_{\Om_j(t)} \partial_t f_{-,j} \varphi_- \dx 
        %    &= -\int_{\Om_j(t)} \bigl(v_0 f_{-,j}(1- \phi \rho_j)+ D_T \partial_x \fL \bigr) \partial_x \varphi_- + \lambda (f_{+,j}- f_{-,j}) \varphi_- \dx \\
         %   & \qquad-\int_{\partial \Om_j(t)}\bigl(\alpha_- g(f_{+,j}, f_{-,j})- \beta_- f_{-,j}\bigr)\varphi_- \; d\sigma_x
    %    \end{split}
%    \end{equation}
%hold for every $\varphi_+, \varphi_- \in  H^1(\Omega_j(t))$, $j= 1, 2$.
\subsection{Transformation for a fixed reference domain}
%\label{subsect:proof_existence}
{Before we give our definition of weak solutions, state the necessary assumptions and our main theorem, we transform \eqref{weak2} into an equivalent system set on a fixed reference domain. This facilitates the proofs and also the spaces that we need to work in.} To this end, we make the following change of variables
    \[
    y= y(t, x)=: \frac{x}{L(t)} \quad \longleftrightarrow \quad x=  L(t) y.
    \]
Then we define the  functions
    $   \bar f_i(t, y)= f_i(t, x)= f_i(t, L(t) y)$, 
and observe that
%    \begin{equation*}
 %   \begin{split}
  %      \partial_t \bar f_i(t, y)&= \partial_t f_i+ L'(t)\,y\, \partial_x f_i(t, L(t) y)  \quad \text{as well as} \quad
   %     \partial_y \bar f_i(t, y)= L(t) \,\partial_x f_i(t, L(t) y),
    %\end{split}
    %\end{equation*}
%from which
    \begin{equation}
        \label{chang-var}
        \begin{split}
        \partial_x f_i=  \frac{1}{L(t)}\, \partial_y \bar f_i,  \quad \partial_t f_i= \partial_t \bar f_i- L'(t)\,y\, \partial_x f_i= \partial_t \bar f_i- \frac{L'(t)}{L(t)}\, y\, \partial_y \bar f_i, \quad \dx= L(t) \dy.
        %, \ d\sigma_x= L(t) d\sigma_y.
        \end{split}
    \end{equation}
Using \eqref{chang-var}, taking into account that, by the product rule, $y\, \partial_y \bar f_+= \partial_y(y\, \bar f_+)- \bar f_+$ and rearranging, the first equation of \eqref{eq:model_cont} reads as
    \begin{align}
        \partial_t \bar f_+&= -\frac{1}{L^2(t)}\, \partial_y \big(L(t)\, v_0\, \bar f_+\,(1- \bar \rho)- D_T\, \partial_y \bar f_+ - L'(t)\,L(t)\, y\, \bar f_+\big) - \frac{L'(t)}{L(t)}\, \bar f_++ \lambda\,(\bar f_- - \bar f_+) \nonumber \\
        &= -\frac{1}{L^2(t)}\, \partial_y \bar J_+- \frac{L'(t)}{L(t)}\, \bar f_++ \lambda\,(\bar f_- - \bar f_+), 
        \label{eq:discr1}
    \end{align}
    and with similar arguments
    \begin{equation*}
        \partial_t \bar f_- = -\frac1{L^2(t)}\,\partial_y \bar J_- - \frac{L'(t)}{L(t)}\, \bar f_- +\lambda\,(\bar f_+ - \bar f_-),
    \end{equation*}
    where the fluxes are defined by
    \begin{align*}
    \bar J_+(t,y) &= \phantom{-}L(t)\,v_0\,\bar f_+(t,y)\,(1- \bar \rho(t,y))- D_T\, \partial_y \bar f_+(t,y) - L'(t)\,L(t)\,y\,\bar f_+(t,y), \\
    \bar J_-(t,y) &= -L(t)\,v_0\,\bar f_-(t,y)\,(1- \bar \rho(t,y))- D_T\, \partial_y \bar f_-(t,y) - L'(t)\,L(t)\,y\,\bar f_-(t,y).
    \end{align*}
Note that the fluxes $J_+$ and $\bar J_+$ are related to each other via
    \begin{align*}
        & J_+(t, y\,L(t))- L'(t)\, y\, f_+(t, y\,L(t))  \\
        &\qquad= v_0 \,f_+(t, y\,L(t))\, (1- \rho(t, y\,L(t)))- D_T\, \partial_x f_+(t, y\,L(t))- L'(t)\,y\,f_+(t, y\,L(t))  \\
        &\qquad= v_0 \,\bar f_+(t,y)\,(1- \bar \rho(t, y))- \frac{D_T}{L(t)}\, \partial_y \bar f_+(t,y)- L'(t) \,y\,\bar f_+(t,y) \\
        &\qquad=\frac{1}{L(t)}\, \big(L(t)\, v_0\, \bar f_+(t,y)\,(1- \bar \rho(t,y))
        - D_T\, \partial_y \bar f_+(t,y)- L'(t)\,y\,\bar f_+(t,y)\big) \\
        &\qquad= \frac{1}{L(t)} \,\bar J_+(t,y),
    \end{align*}
    and a similar relation can be deduced for $J_-$ and $\bar J_-$. The boundary conditions \eqref{boundary} in the reference configuration then read
    \begin{equation}
        \label{boundary_ref_domain}
        \begin{aligned}
            \bar J_+(t,0) &= L(t)\,\alpha_+(\Lambda_{\text{som}}(t))\,g_+(\bar \bbf(t,0)),  \\
            \bar J_+(t,1) &= L(t)\,\beta_+(\Lambda(t))\,\bar f_+(t, 1), \\
            -\bar J_-(t,0) &= L(t)\,\alpha_-(\Lambda_{\text{som}}(t))\,g_-(\bar \bbf(t,0)),  \\
            -\bar J_-(t,1) &= L(t)\,\beta_-(\Lambda(t))\,\bar f_+(t, 1).            
        \end{aligned}
    \end{equation}
%    
%\textcolor{red}{Check (3.13) - vertauscht?}
%
The ODE \eqref{free-boundary} remains unchanged, while for  \eqref{ode} quantities are evaluated at $y=1$ instead of $x=L_j(t)$, which results in 
     \begin{equation}
    \label{ode_transformed}
    \begin{aligned}
    \Lambda'_{\text{som}}(t) &=  \sum_{j=1,2}  \bigl( \beta_{-,j}(\Lambda_{\text{som}}(t))\,\bar f_{-,j}(t,0) - \alpha_{+,j}(\Lambda_{\text{som}}(t))\,g_{+,j}(\bar \bbf_{j}(t,0)) \bigr) + \gamma_{\text{prod}}(t), \\ %\quad &&\text{at } x= 0\\
     \Lambda'_{1}(t) &=  \beta_{+,1}(\Lambda_1(t))\,\bar f_{+,1}(t, 1)- \alpha_{-,1}(\Lambda_1(t))\, g_{-,1}(\bar \bbf_{1}(t, 1))
     -\chi\, h_1(\Lambda_1(t), L_1(t)) , \\
    %&& \text{at } x= 1\\
    %\bb{\beta \fL - \alpha g(\fR, \fL, \Lambda_1)}, \\
    %\Lambda_2'(t) &= \bb{\beta \fL - \alpha g(\fR, \fL, \Lambda_2)}, \\
    \Lambda'_{2}(t) &= \beta_{+,2}(\Lambda_2(t))\,\bar f_{+,2}(t, 1)- \alpha_{-,2}(\Lambda_2(t))\,g_{-,2}(\bar \bbf_{2}(t, 1))
     -\chi\, h_2(\Lambda_2(t), L_2(t)), %&& \text{at } x= 1.
    \end{aligned}
    \end{equation}
    for $t\in (0,T)$.
%\color{blue}
%    
 %   
  %  {\color{blue} This is the old version:
%Moreover, the following holds for the third expression in \eqref{boundary}
 %   \begin{align*}
  %      & J_+(t, L(t))- L'(t) f_+(t, L(t))  \\
   %     &= v_0 f_+(t, L(t)) (1- \rho(t, L(t)))- D_T \partial_x f_+(t, L(t))- L'(t) f_+(t, L(t))  \\
    %    &= v_0 \bar f_+(t,1)(1- \bar \rho(t, 1))- \frac{D_T}{L(t)} \partial_y \bar f_+(t,1)- L'(t) \bar f_+(t,1) \\
     %   &=\frac{1}{L(t)} \big(L(t) v_0 \bar f_+(t,1)(1- \bar \rho(t,1))- D_T \partial_y \bar f_+(t,1)- L'(t)L(t) \bar f_+(t,1)\big) \\
      %  &= \frac{1}{L(t)} \bar J_+(t,1),
    %\end{align*}
%which implies that the corresponding boundary condition reads as
  % \[
   % \bar J_+(t,1)= L(t) \beta_+(\Lambda(t)) \bar f_+(t,1).
   % \]
%A similar argument holds for the equations involving $\bar f_-$.}
\color{black}

\subsection{Notion of weak solution and existence result}

We now define the notion of weak solution to our problem. Whenever not differently specified, we assume $i \in \{+,-\}$ as well as $j \in \{1,2\}$, $k \in \{1,2, \text{som}\}$, while $C>0$ denotes a constant that may change from line to line, but always depends only on the data. 
From now on we always write $f_{i,j}$ instead of $\bar f_{i,j}$ as we always work with the equations on the reference interval.

% Summing up the previous equations and omitting the bar symbols gives

%     \begin{equation}
%         \label{weak5}
%         \begin{split}
%              &\sum_{i= +,-}\int_0^1 \partial_t  f_i \,\varphi_i \dy+ \frac{D_T}{L^2(t)} \,\sum_{i= +,-}  \int_0^ 1 \partial_y  f_i \, \partial_y \varphi_i \dy \\
%             &= \int_0^1 \frac{v_0}{L(t)} f_+ (1-\rho) \partial_y \varphi_+ - \frac{v_0}{L(t)} f_-(1-\rho) \partial_y \varphi_- \dy \\
%             & \quad - \frac{L'(t)}{L(t)}   \sum_{i= +,-} \int_0^1    y f_i \partial_y \varphi_i +  f_i \varphi_i \dy+ \int_0^1 \lambda [(f_- - f_+) \varphi_+ (f_+- f_-)\varphi_-]  \dy \\
%             & \quad +\frac{1}{L(t)} \big( - \beta_{+}(\Lambda(t)) \, f_{+}(t, 1)\,\varphi_+(1) + \alpha_{+}(\Lambda_{\text{som}}(t))\, g_{+}(\bbf(t, 0))\,\varphi_+(0)  \\
%             & \quad + \alpha_{-}(\Lambda(t))\, g_{-}(\bbf(t, 1))\,\varphi_-(1) - \beta_{-}(\Lambda_{\text{som}}(t)) \, f_{-}(t, 0)\,\varphi_-(0)\big).
%             %\qquad +  \l(\beta_+(\Lambda)  f_+ - \alpha_+(\Lambda_{\text{som}}) g_+( f_+,  f_-)\r) \varphi_+ + \l(\beta_-  f_- - \alpha_- g_-( f_+,  f_-) \r) \varphi_- \; d\sigma_y.
%         \end{split}
%     \end{equation}

\begin{definition}
\label{def-1}
We say that $(\bbf_1, \bbf_2, \Lambda_{\emph{som}}, \Lambda_1, \Lambda_2, L_1, L_2)$ is a weak solution to \eqref{eq:discr1}--\eqref{ode_transformed}, \eqref{free-boundary} if 
    
%    \begin{itemize}
       \noindent (a) $0\le f_{i,j} \le 1$ as well as $\rho_j:= f_{+,j}+ f_{-,j} \le 1$ a.e. in\ $(0,T)\times(0,1)$;
       
       \noindent (b) $  f_{i,j} \in L^2(0,T; H^1(0, 1))$ with $\partial_t f_{i,j} \in L^2(0,T; H^1(0, 1)')$;
       
       \noindent (c) $\bbf_j$ solves \eqref{eq:discr1}--\eqref{boundary_ref_domain} in the following weak sense
    \begin{equation}\label{weak2}
    \begin{aligned}
        \int_0^1 \partial_t f_+\, \varphi_+\,\mathrm dy 
       &=  \int_0^1 \frac{1}{L^2(t)} \Big[L(t)\,v_0\,f_+\,(1- \rho) - D_T\,\partial_y f_+ - L'(t)\,L(t)\,y\,f_+\Big]\,\partial_y \varphi_+ \\
       &\quad + \Big(\lambda(f_- - f_+) -\frac{L'(t)}{L(t)}\,f_+\Big) \,\varphi_+ \,\mathrm dy \\
        & \quad + \frac{1}{L(t)}\,\alpha_+(\Lambda_{\rm som}(t))\,g_+(\bbf(t,0))- \frac{1}{L(t)} \,\beta_+(\Lambda(t))\, f_+(t, 1) \,\varphi_+(1),  \\
        \int_0^1 \partial_t f_-\,\varphi_-\, \mathrm dy 
       &=  \int_0^1 \frac{1}{L^2(t)} \Big[-L(t)\,v_0\,f_-\,(1- \rho)- D_T\, \partial_y f_- - L'(t)\,L(t)\,y\, f_-\Big]\,\partial_y \varphi_- \\
       &\quad + \Big(\lambda(f_+ - f_-)-\frac{L'(t)}{L(t)} f_-\Big)\, \varphi_- \,\mathrm dy  \\ 
       & \quad -\frac1{L(t)}\,\beta_-(\Lambda_{\rm som}(t))\,f_-(t,0)\,\varphi_-(0) +  \frac{1}{L(t)} \,\alpha_-(\Lambda(t))\,g_-(\bbf(t,1))\,\varphi_-(1), 
       \end{aligned}
    \end{equation}
    for every $\varphi_+, \varphi_- \in  H^1(0, 1)$ and almost all $t\in(0,T)$.

%    \noindent (d) $L_j^0, \Lambda_k^0>0$, and $f_{ij}^0 \in L^2(0, L_j^0)$ such that $0 \le \rho_j^0 \le 1$ a.e. in $(0, L_j^0)$;
    
  \noindent      (d) $L_j(0)= L_j^0$, $\Lambda_k(0)= \Lambda_k^0$, and $\bbf_j(0, y)= \bbf^0_j(y)$ for almost all $y\in(0,1)$, for suitable $L_j^0, \Lambda_k^0$, and $f_{i,j}^0$;
        
 \noindent       (e)
        $\Lambda_k \in {C^1([0, T])}$ solves \eqref{ode_transformed};
        
 \noindent       (f) $L_j \in {C^1([0, T])}$ solves \eqref{free-boundary}.
    
%    \end{itemize}
\end{definition}
\vskip1em
We next state the assumptions on the data and non-linearities which read as follows\\
%
%Let $\mathcal{L}>0$ denote a generic Lipschitz constant, which will be made  explicit whenever it is the case. 
%We make the following assumptions. 

%    \begin{itemize}
    
        \noindent (H$_0$)  
        $\Lambda_k^0>0$ and  $L_j^0 \ge L_{\text{min}, j}$, where  $L_{\text{min}, j}>0$ is given.

         \noindent (H$_1$) 
        For $f_{+,j}^0, f_{-,j}^0 \in L^2(0,1)$ it holds $ f_{+,j}^0, f_{-,j}^0 \ge 0$ and $0 \le \rho_j^0 \le 1$ a.e. in $(0,1)$, where $\rho_j^0:= f_{+,j}^0+ f_{-,j}^0$.
        
        \noindent (H$_2$)
        The nonlinearities $g_{i,j}\colon \R^2 \to \R_+$, are   Lipschitz continuous and such that $g_{i,j}(s,t)= 0$ whenever $s+t = 1$ as well as $g_{-,j}(s,0) = g_{+,j}(0,s) = 0$ for all $0 \le s \le 1$.
        
        \noindent (H$_3$)  The functions $h_j\colon \R_+ \times [ L_{\text{min}, j},+\infty)  \to \R$ are such that
        
      %      \begin{itemize}
                \noindent \, \, (i) {there exist non-negative functions $K_{h_j} \in L^\infty((0,\infty))$ such that 
                $$
                |h_j(t,a) - h_j(t,b)| \le K_{h_j}(t)|a-b|,
                $$
                for all $a,\,b \in [ L_{\text{min}, j},+\infty)$;}
                
                \noindent \, \, (ii) 
                %$h_j(s, \cdot)$ are  increasing for all $s \in \R$ and there exists $(s^*,t^*)$ such that $h_j(s^*,t^*) = 0$. %\textcolor{magenta}{Greta: Check implications analysis}
            %
       % \noindent \, \, (iii) 
       $h_j(s, L_{\text{min}, j})\ge 0$ for every $s\ge 0$.
        
% Questa hp (iii) serve affinché la (ii) del Deimling sia soddisfatta
        
%        \end{itemize}

        \noindent (H$_4$) {The functions $\alpha_{i,j}\colon \R_+ \to \R_+$ are increasing and  Lipschitz continuous. Moreover, $\alpha_{-,j}(t) \ge 0$ for all $t>0$ and $\alpha_{-,j}(0)= 0$.}
        
        \noindent (H$_5$) The functions $\beta_{i,j}\colon \R_+ \to \R_+$ are nonnegative and  Lipschitz continuous. Moreover, there exists $\Lambda_{j, \text{max}}>0$ such that $\beta_{+,j}(\Lambda_{j, \text{max}})= 0$.

        %
        %\color{ma} The functions $\beta_{-,j} \colon \R_+ \to \R_+$ are nonnegative and Lipschitz continuous.
        \color{black}
        %where \textcolor{red}{$\beta_{+,1}(\Lambda) \ge 0$, $\beta_{+,1}(\Lambda_{max})=0$, $\alpha \ge 0$, $\alpha(0)=0$}
        %
        
        \noindent (H$_6$) The parameters satisfy $  v_0, D_T, \chi >0$ and $ \lambda \ge 0$.

        \noindent (H$_7$) The function $\gamma_{\text{prod}} \colon \R_+ \to \R_+$ is such that $\lim_{t\to\infty} \gamma_{\text{prod}}(t) =0$.

 %   \end{itemize}

\begin{remark}[Interpretation of the assumptions]
{Let us briefly discuss the meaning of the assumptions in terms of our application. Assumption (H$_0$) states that we start with a predefined number of neurites with length above a fixed minimal length and that all pools as well as the soma are non-empty. (H$_1$) is necessary as $f_{+,j}^0, f_{-,j}^0$ model densities and must therefore be non-negative and as we assume that there is a maximal density (due to the limited space in the neurites) normalized to $1$. In (H$_2$), the regularity is needed for the analysis and only a mild restriction. The remaining requirements are necessary to ensure that all densities remain between $0$ and $1$. (H$_3$) ensures that there is a lower bound for the length of the neurites meaning that neurites cannot vanish as it is observed in practice. (H$_4$)  ensures that vesicles can only enter neurites if there are some available in growth cone or soma, respectively, while (H$_5$) allows the pools to decrease the rate of entering vesicles when they become too crowded. Finally, (H$_6$) states that diffusion, transport and reaction effects are all present at all times (yet with possible different strengths) and (H$_7$) finally postulates the production of vesicles within the soma.}
{We point out that assumption (H$_7$) is only needed to guarantee existence of stationary solutions.}
    
\end{remark}

\color{black}

\medskip

Then we can state our  existence result.

\begin{theorem}
\label{thm:existence}
Let the assumptions \emph{(H$_0$)}--\emph{(H$_6$)} hold.
%and denote by $L_j^0>0$ the initial neurite lengths. Then, given initial data $\Lambda_k^0 \ge 0$ as well as $f_{+,j}^0, f_{-,j}^0 \in L^2(0,L_j^0)$ satisfying $0 \le f^0_{+,j} + f^0_{-,j} \le 1$ a.e., 
Then, for every $T>0$ there exists a unique weak solution 
$(\bbf_1, \bbf_2, \Lambda_{\emph{som}}, \Lambda_1, \Lambda_2, L_1, L_2)$ to \eqref{eq:discr1}--\eqref{ode_transformed}, \eqref{free-boundary} in the sense of Definition \ref{def-1}.
\end{theorem}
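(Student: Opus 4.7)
\medskip
\noindent\textbf{Proof plan for Theorem \ref{thm:existence}.}

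The strategy is to decouple the free boundary, the ODEs, and the PDEs, and to close the argument via a fixed point. First I would fix the moving domains by the change of variables $y=x/L_j(t)$ and set $\tilde f_{i,j}(t,y):=f_{i,j}(t,yL_j(t))$. This turns each neurite into $(0,1)$ at the cost of extra advection $\tfrac{y L_j'(t)}{L_j(t)}\partial_y\tilde f_{i,j}$, rescaled diffusion $D_T/L_j(t)^2$, and rescaled fluxes at $y=0,1$. Hypothesis \emph{(H$_3$)}(iii) together with \emph{(H$_3$)}(i)--(ii) is used to guarantee $L_j(t)\ge\ell_j$ whenever $L_j$ solves \eqref{free-boundary} starting from $L_j^0\ge\ell_j$: since $h_j(s,\ell_j)>0$ and $h_j(s,\cdot)$ is non-decreasing, the half-line $\{L_j\ge\ell_j\}$ is positively invariant. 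In particular the rescaled coefficients remain uniformly bounded, and the admissible set
\[
\mathcal{K}_T:=\bigl\{(L_1,L_2,\Lambda_1,\Lambda_2,\Lambda_{\text{som}})\in C([0,T];\R^5):\ \ell_j\le L_j\le M,\ 0\le\Lambda_k\le M\bigr\}
\]
is convex and closed for $M$ chosen large via the mass identity in the Remark.

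Next, for frozen data $(\hat L_j,\hat\Lambda_k)\in\mathcal{K}_T$ I would solve the transformed PDE system for $\bbf_j$. Because of the quadratic term $v_0 f_{i,j}(1-\rho_j)$, the inner solve is itself non-linear; I would run a Galerkin approximation in $H^1(0,1)$ combined with a Schauder fixed point on the transport coefficient $(1-\rho_j)$, truncated to $[0,1]$ to keep the drift bounded. Standard parabolic a priori bounds (testing with $\tilde f_{i,j}$ and using \eqref{eq:trace_theorem} to absorb boundary terms) yield $\tilde f_{i,j}\in L^2(0,T;H^1)\cap L^\infty(0,T;L^2)$ with $\partial_t\tilde f_{i,j}\in L^2(0,T;H^1(0,1)')$, and compactness in $L^2$ via Aubin--Lions gives a limit. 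The box constraints are the crucial qualitative property and are proved by a Stampacchia argument: testing the $f_{+,j}$ equation with $-f_{+,j}^-$ (and likewise for $f_{-,j}$) yields $f_{i,j}\ge0$ thanks to \emph{(H$_2$)}, since $g_{+,j}(0,s)=g_{-,j}(s,0)=0$ switches off the incoming flux at the zero set. Testing the equation for $\rho_j=f_{+,j}+f_{-,j}$ with $(\rho_j-1)^+$ proves $\rho_j\le 1$: the transport flux contains the factor $(1-\rho_j)$ which vanishes where $\rho_j=1$, and the boundary terms involving $g_{i,j}$ vanish there by $g_{i,j}(s,t)=0$ for $s+t=1$, while the outgoing $\beta$-fluxes have the correct sign thanks to \emph{(H$_5$)}. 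This removes the truncation a posteriori. With $0\le\bbf_j\le 1$ in hand, the traces $f_{i,j}(t,0),f_{i,j}(t,L_j(t))$ lie in $L^2(0,T)$ and, using \emph{(H$_4$)},\,\emph{(H$_5$)}, the right-hand sides of \eqref{ode} and \eqref{free-boundary} become Lipschitz functions so that the ODE system defines unique $C^1$ curves $\Lambda_k,L_j$.

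Composing the PDE solve with the ODE solve gives a map $\mathcal{F}:\mathcal{K}_T\to\mathcal{K}_T$, and I would close the argument by a Banach contraction for small $T=T_0$: taking two inputs $(\hat L_j^1,\hat\Lambda_k^1),(\hat L_j^2,\hat\Lambda_k^2)$, subtracting the corresponding PDE and ODE systems, pulling everything back to the reference interval $(0,1)$, and running the standard energy-plus-Gronwall computation yields
\[
\|\mathcal{F}(u^1)-\mathcal{F}(u^2)\|_{C([0,T_0])}\le CT_0\,\|u^1-u^2\|_{C([0,T_0])},
\]
where the Lipschitz constants of $h_j,\alpha_{i,j},\beta_{i,j},g_{i,j}$ and the boundedness of $\bbf_j$ enter through \emph{(H$_2$)}--\emph{(H$_5$)} and the trace estimate \eqref{eq:trace_theorem}. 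For $T_0$ small enough this is a strict contraction, giving a unique local solution; the solution is then extended globally by concatenation because the a priori bounds $\ell_j\le L_j\le M$, $0\le\Lambda_k\le M$, $0\le\bbf_j\le 1$ do not depend on the sub-interval chosen. Uniqueness on $[0,T]$ follows by the same energy-Gronwall estimate applied to two putative solutions on the whole interval.

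The main obstacle I anticipate is controlling the domain-deformation terms that appear after the change of variables: the factor $L_j'(t)/L_j(t)$ enters the PDE as an extra advection coefficient whose Lipschitz dependence on $\hat L_j$ is required to close the contraction. Because $L_j'$ is only a bounded continuous function a priori, some care is needed either to estimate $L_j$ in $W^{1,\infty}$ (using that $h_j$ is Lipschitz and $\Lambda_j,L_j$ are bounded) or to work with $L_j\in C^{0,1}$ throughout the fixed-point, so that the rescaled PDE has coefficients with the regularity demanded by the parabolic theory. Once this point is settled, the remaining steps are standard.
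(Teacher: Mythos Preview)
Your plan is sound and amounts to the same circle of ideas as the paper, but with the outer fixed point run on the ODE variables $(L_j,\Lambda_k)$ rather than on the PDE variables $\bbf_j$. The paper fixes $\hat\bbf$, solves the ODEs for $(\boldsymbol\Lambda,\boldsymbol L)$ via Cauchy--Lipschitz, then solves the PDE on the domain determined by $\boldsymbol L$, and closes with a Banach contraction on $\bbf$ in $L^2((0,T);H^1)$; you reverse the roles. Both routes need the same three Lipschitz dependences (ODE on traces, $L$ on $\Lambda$, PDE on $(\boldsymbol\Lambda,\boldsymbol L)$), so neither is genuinely cheaper. Your inner PDE solve via Galerkin--Schauder with truncation is a standard variant of the paper's Banach-with-truncation argument, and your Stampacchia tests $-f_{+,j}^-$ and $(\rho_j-1)^+$ are the unregularised analogues of the paper's $\eta_\varepsilon$-approximations. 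One small correction: for $f_{i,j}\ge 0$ what is actually used from (H$_2$) is that $g_{i,j}\ge 0$, not the vanishing condition $g_{+,j}(0,s)=0$; the latter is needed only in the paper's treatment of the transport term on the set where $f_{+,j}$ is small.

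One point where the paper's ordering buys something concrete: by solving the ODE \emph{before} the PDE, the paper has $L_j\in C^1$ in hand, so the deformation coefficient $L_j'(t)/L_j(t)$ entering the transformed PDE is a genuine bounded continuous function and its Lipschitz dependence on the input data follows from Lemmas~\ref{lemma:B1}--\ref{lemma:B2}. In your ordering you freeze $\hat L_j\in C([0,T])$ and then need $\hat L_j'$; the clean fix is not to pass to $C^{0,1}$ as you suggest (that would force the contraction in a stronger norm and require extra regularity of the traces), but simply to \emph{define} the frozen advection coefficient as $h_j(\hat\Lambda_j,\hat L_j)/\hat L_j$ in the transformed PDE. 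This depends only on the $C^0$ data, is Lipschitz in $(\hat\Lambda_j,\hat L_j)$ by (H$_3$), is bounded thanks to $\hat L_j\ge\ell_j$, and coincides with $L_j'/L_j$ at the fixed point. With this substitution the obstacle you flag disappears and the $C^0$-contraction closes exactly as in the paper.
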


%In what follows we give some preliminary material and state the necessary assumptions of the data and the parameters, leaving  the proof of  Theorem \ref{thm:existence} to Subsection~\ref{subsect:proof_existence}.

% \medskip

% We pursue the following objectives:

% \begin{itemize}
%     \item Existence of solutions \textcolor{magenta}{$\surd$}
%     \item (Understanding as generalized grad flow)
%     \item Long time behaviour
%     \item Stationary states and their stability
%     \item (Quantitative statements about $L(t)$?)
%     \item (Extension: Our cross-diff.... But why from modelling perspective? Also, $D_T \to 0$, but regularity....)
%     \item Numerical simulations
% \end{itemize}

%\subsection{Assumptions on the parameters}
%\label{sec:assumptions}

\subsection{Proof of Theorem \ref{thm:existence}}
The proof of Theorem \ref{thm:existence} is based on a fixed point argument applied to an operator obtained by concatenating linearized versions of \eqref{weak2}, \eqref{free-boundary}, and \eqref{ode_transformed}.
Let us briefly sketch our strategy before we provide the corresponding rigorous results. 
We work in the Banach space 
    \[
    X= \prod_{j=1,2} L^2(0,T; H^1(0, 1))^2 % \textcolor{red}{\times C^1([0,T])^2}
    \]
%\times (L^2((0,T); H^1(0, L_2(t))))^2$ 
endowed with the norm
    \[
    \|(\bbf_1, \bbf_2
    %, \boldsymbol{L}
    )\|^2_X= \sum_{j=1,2} \sum_{i=+,-} \|f_{i,j}\|^2_{L^2(0,T; H^1(0, 1))}
    %+ \sum_{j=1,2} |L_j|^2
    .
    \]
%where $\boldsymbol{L}:= (L_1, L_2)$. 
{Given $(\widehat \bbf_1, \widehat \bbf_2
%, \widehat{\boldsymbol{L}}
) \in X$, }  let  $\boldsymbol{\Lambda}:=(\Lambda_{\text{som}}, \Lambda_1, \Lambda_2) \in C^1([0,T])^3$
%solve \eqref{ode}, that is,
%with respect to fixed ; i.e., the vector 
be the unique solution to {the ODE system}
    \begin{equation}
    \label{linear-ode}
    \begin{aligned}
    \Lambda_{\text{som}}'(t) &=  \sum_{j=1,2}  \bigl( \beta_{-,j}(\Lambda_{\text{som}}(t)) \widehat f_{-,j}(t,0) - \alpha_{+,j}(\Lambda_{\text{som}}(t))\, g_{+,j}(\widehat \bbf_{j}(t,0)) \bigr) + \gamma_{\text{prod}}(t), \\ %\quad &&\text{at } x= 0\\
    \Lambda_{1}'(t) &=   \beta_{+,1}(\Lambda_1(t))\,\widehat f_{+,1}(t, 1)- \alpha_{-,1}(\Lambda_1(t))\,g_{-,1}(\widehat \bbf_{1}(t, 1))  - \chi\, h_1(\Lambda_1(t), L_1(t)), \\
    \Lambda_{2}'(t) &= \beta_{+,2}(\Lambda_2(t))\,\widehat f_{+,2}(t, 1)- \alpha_{-,2}(\Lambda_2(t))\,g_{-,2}(\widehat \bbf_{2}(t, 1))  - \chi\,h_2(\Lambda_2(t), L_2(t)). %&& \text{at } x= L_2(t).
    \end{aligned}
    \end{equation}
We denote the mapping $(\widehat \bbf_1, \widehat \bbf_2
%, \widehat{\boldsymbol{L}}
) \mapsto \boldsymbol{\Lambda}$ by $\mathcal{B}_1$.     
This $\boldsymbol{\Lambda}$ is now substituted into  \eqref{free-boundary}, that is, we are looking for the unique solution $\boldsymbol{L}=(L_1, L_2) \in C^1([0,T])^2$ to {the  ODE problems}
            \begin{equation}
              \label{free-bdry2}
                L_j'(t)= h_j(\Lambda_j(t), L_j(t)),
%                \qquad \bb{\Longrightarrow \bar L_i(t)}
            \end{equation}
and the corresponding solution operator is denoted by $\mathcal{B}_2$. 
Finally, these solutions $\boldsymbol{\Lambda}$ and  $\boldsymbol{L}$  are substituted into  \eqref{weak2}, and  we look for the unique solution 
 $\bbf_j \in L^2(0, T; H^1(0, 1))^2$, with  $\partial_t \bbf_j \in L^2(0, T; H^1(0, 1)^{\prime})^2$,  to { the (still non-linear) PDE problem}
    \begin{align}
        \label{weak2.1}
           % \label{weak2}
        %    \begin{split}
           & \int_0^{1} \partial_t f_{+,j} \,\varphi_+ \dy = \int_0^{1} \frac{1}{L(t)^2}\left[L(t)\,v_0\,f_{+,j}\,(1-  \rho_j)- D_T\,\partial_y f_{+,j} - L'(t)\,L(t)\,y\,f_+ \right] \partial_y \varphi_+ \\
           &\quad +\left(\lambda\, (f_{-,j}- f_{+,j}) - \frac{L'(t)}{L(t)}\,f_{+,k}\right)\, \varphi_+ \dy 
           \nonumber \\
           & + \frac{1}{L(t)}\left[\alpha_{+,j}(\Lambda_{\text{som}})\,g_{+,j}(\bbf_{j}(t, 0))\,\varphi_+(0) - \beta_{+,j}(\Lambda_j)\,f_{+,j}(t, 1)\,\varphi_+(1)\right] ,\nonumber    
    \end{align}
    \begin{align}
     \label{weak2.2} 
            & \int_0^{ 1} \partial_t f_{-,j}\,\varphi_- \dy =  \int_0^{ 1} - \frac{1}{L(t)^2}\left[L(t)\,v_0\,f_{-,j}(1-  \rho_j)+ D_T\,\partial_y f_{-,j} - L'(t)\,L(t)\,y\,f_-\right]\, \partial_y \varphi_-
            \\
            &\quad +\left(\lambda\, (f_{+,j}- f_{-,j}) - \frac{L'(t)}{L(t)}\,f_{-,k}\right)\, \varphi_- \dy \nonumber \\
            &-\frac{1}{L(t)}\left[\beta_{-,j}(\Lambda_{\text{som}}(t)) \,f_{-,j}(t, 0)\,\varphi_-(0) 
            - \alpha_{-,j}(\Lambda_j)\,g_{-,j}(\bbf_{j}(t, 1))\,\varphi_-(1)\right],\nonumber
    \end{align}
for every $\varphi_+, \varphi_- \in  H^1(0, 1)$.
%
%\medskip
 %   
%Our strategy will be as follows.
%
% \begin{itemize}
%	\noindent (a)  System \eqref{linear-ode} possesses a unique solution $\boldsymbol{\Lambda}:=(\Lambda_{\text{som}},\Lambda_1,\Lambda_2) \in C([0, T])^3$, for fixed $(\widehat \bbf_1, \widehat \bbf_2) \in X$. Denote the mapping $(\widehat \bbf_1, \widehat \bbf_2) \mapsto \boldsymbol{\Lambda}$ by $\mathcal{B}_1$.
	%
%	\noindent (b) System \eqref{free-bdry2} possesses a unique solution $\boldsymbol{L}:=(L_1,L_2) \in C([0, T])^2$, for fixed $\boldsymbol{\Lambda} \in C([0, T])^3$. Denote the mapping $\boldsymbol{\Lambda} \mapsto \boldsymbol{L}$ by $\mathcal{B}_2$.
	%
	%\noindent (c) The PDE system \eqref{weak2.1}--\eqref{weak2.2}  possesses a unique solution $(\bbf_1, \bbf_2) \in X$ satisfying $0 \le f_{i,j} \le 1$ and $0 \le \rho_j \le 1$, for fixed  $\boldsymbol{L} \in C([0, T])^2 $. 
 We call the resulting solution operator $(\boldsymbol{\Lambda}, \boldsymbol{L}) \mapsto (\bbf_1, \bbf_2
 %, \boldsymbol{L}
 )$  $\mathcal{B}_3$.
 %, noticing that $(\boldsymbol{\Lambda}, \boldsymbol{L}) \mapsto (\bbf_1, \bbf_2)$ is the solution operator to the previous problem while $(\boldsymbol{\Lambda}, \boldsymbol{L}) \mapsto \boldsymbol{L}$ acts as the identity. 
	%
Then, given an appropriate subset $\mathcal{K} \subset X$, we define the (fixed point) operator $\mathcal{B} \colon \mathcal{K} \to X $ as
	$$
	\mathcal{B}(\widehat \bbf_1, \widehat \bbf_2
 %, \widehat{\boldsymbol{L}}
 )= \mathcal{B}_3 \bigl(\mathcal{B}_1(\widehat \bbf_1, \widehat \bbf_2
 %, \widehat{\boldsymbol{L}}
 ), \mathcal{B}_2(\mathcal{B}_1(\widehat \bbf_1, \widehat \bbf_2
 %, \widehat{\boldsymbol{L}})
 )\bigr)= (\bbf_1, \bbf_2
 %, {\boldsymbol{L}}
 ).
	$$
%	which maps $(\widehat \bbf_1, \widehat \bbf_2) \in \mathcal{K}$ to $(\bbf_1, \bbf_2)$. 
We show that $\mathcal{B}$ is self-mapping and contractive, so that existence is a consequence of the Banach's fixed point theorem.

%\end{itemize}
%
Let us begin with system \eqref{linear-ode}.
\begin{lemma} 
\label{lemma:lambda}

Let $(\widehat \bbf_1, \widehat \bbf_2
%, \widehat{\boldsymbol{L}}
) \in X$,
%and $(\Lambda_{\emph{som}}^0, \Lambda_1^0, \Lambda_2^0) \in \R^3$. Then, 
then, there exists a unique  $\boldsymbol{\Lambda}= (\Lambda_{\emph{som}}, \Lambda_1, \Lambda_2) \in C^1([0, T])^3$ that solves \eqref{linear-ode} with initial conditions
    \begin{equation}
        \label{in-cond-Lambda}
        \Lambda_k(0)= \Lambda_k^{0}, \quad k= \emph{som}, 1, 2.
    \end{equation} 
\end{lemma}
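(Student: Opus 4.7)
The plan is to recast \eqref{linear-ode} as a first-order system $\boldsymbol{\Lambda}'(t) = \boldsymbol{F}(t, \boldsymbol{\Lambda}(t))$ with initial condition \eqref{in-cond-Lambda}, and to apply a Picard--Lindel\"of argument in its Carath\'eodory version. Here $\boldsymbol{F} \colon [0,T] \times \R^3 \to \R^3$ collects the three right-hand sides of \eqref{linear-ode}; its time-dependence enters only through the boundary traces $\widehat{f}_{i,j}(t, 0)$, $\widehat{f}_{i,j}(t, L_j(t))$, the production term $\gamma(t)$, and the length $L_j(t)$ (understood as prescribed alongside $\widehat{\bbf}$), while its dependence on $\boldsymbol{\Lambda}$ enters only through the nonlinearities $\alpha_{\pm,j}, \beta_{\pm,j}, h_j$.

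The first step is to verify the Carath\'eodory hypotheses. By the trace estimate \eqref{eq:trace_theorem} applied pointwise in $t$ and integrated in time, the boundary traces of $\widehat{\bbf}_j \in L^2((0,T); H^1(0, L_j(t)))^2$ at $x = 0$ and $x = L_j(t)$ belong to $L^2(0,T)$. Combined with the Lipschitz continuity of $\alpha_{\pm,j}, \beta_{\pm,j}, g_{i,j}, h_j$ from \emph{(H$_2$)--(H$_5$)} and the bounds on $\gamma$ from \emph{(H$_7$)}, this yields that (i) $\boldsymbol{F}(t, \cdot)$ is Lipschitz on bounded sets with Lipschitz constant $K(t) \in L^2(0,T)$; (ii) $\boldsymbol{F}(\cdot, \boldsymbol{\Lambda}) \in L^2((0, T); \R^3)$ for each fixed $\boldsymbol{\Lambda}$; and (iii) a sublinear growth estimate $|\boldsymbol{F}(t, \boldsymbol{\Lambda})| \le m(t)(1 + |\boldsymbol{\Lambda}|)$ holds with $m \in L^2(0, T)$. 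Carath\'eodory's theorem then produces a unique absolutely continuous local solution, and the sublinear growth together with Gronwall excludes finite-time blow-up, extending the solution to the whole interval $[0, T]$.

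Next I would check that $\boldsymbol{\Lambda}(t) \in \R_+^3$, so that the nonlinearities $\alpha_{\pm,j}, \beta_{\pm,j}$ are always evaluated in their domain. At the boundary $\Lambda_j = 0$, the assumption $\alpha_{-,j}(0) = 0$ from \emph{(H$_4$)} together with $\beta_{+,j}, g_{-,j} \ge 0$ and $\widehat{f}_{+,j} \ge 0$ (propagated from \emph{(H$_1$)} through the PDE step of the fixed point) leaves only $-c_j h_j(0, L_j)$ as a potentially negative term; by the Lipschitz bound on $h_j$ in \emph{(H$_3$)}, a standard Gronwall comparison propagates the strict positivity $\Lambda_j^0 > 0$ of \emph{(H$_0$)} to positive times. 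An analogous comparison, using $\gamma \ge 0$ and the Lipschitz structure, treats $\Lambda_{\text{som}}$.

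Finally, the $C^1$ regularity claimed in the statement goes slightly beyond what the preceding argument delivers directly (namely $\boldsymbol{\Lambda} \in H^1(0, T)^3$), as it requires continuity of the boundary traces in $t$. I would obtain this by exploiting the additional time regularity $\partial_t \widehat{\bbf}_j \in L^2((0, T); H^1(0, L_j(t))')$ encoded in the fixed-point class, via an Aubin--Lions-type argument combined with the interpolation \eqref{eq:trace_theorem}, which makes the right-hand side $\boldsymbol{F}(\cdot, \boldsymbol{\Lambda}(\cdot))$ continuous on $[0, T]$. The main obstacle I expect is precisely this regularity upgrade: carefully defining the trace on the moving interface $x = L_j(t)$ and transferring the resulting $C([0, T])$ regularity of the right-hand side back to $\boldsymbol{\Lambda}$ to conclude $\boldsymbol{\Lambda} \in C^1([0, T])^3$.
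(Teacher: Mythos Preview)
Your core approach --- cast \eqref{linear-ode} as $\boldsymbol{\Lambda}' = \boldsymbol{F}(t,\boldsymbol{\Lambda})$ and invoke a Picard--Lindel\"of type theorem using that $\boldsymbol{F}$ is Lipschitz in $\boldsymbol{\Lambda}$ --- is exactly what the paper does; its proof is a single sentence citing the Cauchy--Lipschitz theorem together with (H$_4$)--(H$_5$). Your use of the Carath\'eodory version is a genuine refinement: since $(\widehat\bbf_1,\widehat\bbf_2)$ lies only in $X$, the boundary traces entering $\boldsymbol{F}$ are a priori only in $L^2(0,T)$, not continuous, and the paper glosses over this point.

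Two remarks on the extra steps you add. First, the positivity paragraph is not part of the lemma and your argument for it has a gap: at this stage $\widehat\bbf$ is an \emph{arbitrary} element of $X$, so you cannot assume $\widehat f_{+,j}\ge 0$ ``propagated through the PDE step''; that nonnegativity is only available after the fixed point is closed. Second, your diagnosis of the $C^1$ claim is correct --- with mere $X$-regularity the right-hand side is only $L^2$ in time, yielding $\boldsymbol{\Lambda}\in H^1(0,T)^3$ rather than $C^1$ --- but your proposed fix via $\partial_t\widehat\bbf_j \in L^2((0,T);H^1{}')$ is not available for a generic element of $X$, which carries no time-derivative information. This is really an imprecision in the lemma as stated (the paper simply does not address it); the honest conclusion from your argument is $\boldsymbol{\Lambda}\in W^{1,1}(0,T)^3$ (or $H^1$), which is all that is actually used downstream.
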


\begin{proof}

This result is an application of the Cauchy-Lipschitz theorem, since the right-hand sides of \eqref{linear-ode} are Lipschitz continuous with respect to $\Lambda_k$ thanks to hypotheses (H$_4$) and (H$_5$).
\end{proof}

\begin{lemma}
\label{lemma:B1}
Let  $\mathcal{B}_1\colon X \to C([0, T])^3$ be the operator that maps $(\widehat \bbf_1, \widehat \bbf_2
%, \widehat{\boldsymbol{L}}
) \in X$ to the solution $\boldsymbol{\Lambda}$ to \eqref{linear-ode}. Then, $\mathcal{B}_1$ is Lipschitz continuous.
%
%there exists a constant $C_1>0$ depending only on the data such that
 %   \begin{equation}
  %  \|\boldsymbol{\Lambda}^{(1)}- \boldsymbol{\Lambda}^{(2)}\|_{C^1([0, \infty))^3} \le  C_1 \|(\widehat \bbf_1^{(1)}, \widehat \bbf_2^{(1)})- (\widehat \bbf_1^{(2)}, \widehat \bbf_2^{(2)})\|_X.
   % \end{equation}
\end{lemma}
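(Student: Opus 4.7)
I would establish the Lipschitz estimate by a Gronwall argument on the difference of two copies of \eqref{linear-ode}, with the boundary trace values controlled through the trace inequality \eqref{eq:trace_theorem}. The Lipschitz constant will depend on an \emph{a priori} bound for the inputs in $X$, so that what is actually shown is Lipschitz continuity on bounded subsets of $X$, which is what is needed for the subsequent fixed-point argument.

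\textbf{Execution.} Fix two inputs $(\widehat{\bbf}_1^{(a)}, \widehat{\bbf}_2^{(a)}) \in X$, $a = 1, 2$, with associated solutions $\boldsymbol{\Lambda}^{(a)} = \mathcal{B}_1(\widehat{\bbf}_1^{(a)}, \widehat{\bbf}_2^{(a)})$ given by Lemma \ref{lemma:lambda}, and write $\delta u := u^{(1)} - u^{(2)}$. Subtract the two copies of \eqref{linear-ode} and telescope each product in the standard way, e.g.
\begin{align*}
\beta_{-,j}(\Lambda_{\text{som}}^{(1)})\,\widehat{f}_{-,j}^{(1)}(\cdot,0) - \beta_{-,j}(\Lambda_{\text{som}}^{(2)})\,\widehat{f}_{-,j}^{(2)}(\cdot,0)
&= \bigl(\beta_{-,j}(\Lambda_{\text{som}}^{(1)}) - \beta_{-,j}(\Lambda_{\text{som}}^{(2)})\bigr)\,\widehat{f}_{-,j}^{(1)}(\cdot,0) \\
&\quad + \beta_{-,j}(\Lambda_{\text{som}}^{(2)})\,\delta \widehat{f}_{-,j}(\cdot,0),
\end{align*}
and likewise for the $\alpha_{i,j} g_{i,j}$ and $h_j$ contributions. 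By (H$_2$)--(H$_5$) together with a uniform $L^\infty(0,T)$ bound on $\boldsymbol{\Lambda}^{(a)}$ (obtained by applying Gronwall to \eqref{linear-ode} itself and using only a bound on $\|\widehat{\bbf}^{(a)}\|_X$ via the trace inequality), the Lipschitz and boundedness constants of $\alpha_{i,j}, \beta_{i,j}, g_{i,j}, h_j$ can be taken uniform in $a$, yielding
\begin{align*}
|\delta \boldsymbol{\Lambda}(t)| \le C \int_0^t |\delta \boldsymbol{\Lambda}(s)|\,\phi(s)\,ds + C \sum_{i,j}\int_0^t \bigl( |\delta \widehat{f}_{i,j}(s,0)| + |\delta \widehat{f}_{i,j}(s, L_j(s))| \bigr)\,ds,
\end{align*}
where $\phi(s) := 1 + \sum_{i,j} \bigl(|\widehat{f}_{i,j}^{(1)}(s,0)| + |\widehat{f}_{i,j}^{(1)}(s, L_j(s))|\bigr)$. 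Applying Cauchy--Schwarz in $s$ and the trace estimate \eqref{eq:trace_theorem} slice-by-slice, both $\|\widehat{f}_{i,j}^{(1)}(\cdot,\xi)\|_{L^2(0,T)}$ and $\|\delta \widehat{f}_{i,j}(\cdot,\xi)\|_{L^2(0,T)}$, for $\xi \in \{0, L_j(\cdot)\}$, are controlled by the corresponding $L^2((0,T); H^1(0, L_j(t)))$ norms. Hence $\phi \in L^1(0,T)$ with norm bounded in terms of $\|\widehat{\bbf}^{(1)}\|_X$, and the inhomogeneous term is bounded by $C \sqrt{T}\,\|\delta \widehat{\bbf}\|_X$. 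Gronwall's lemma then gives $\|\delta \boldsymbol{\Lambda}\|_{C([0,T])^3} \le C\,\|\delta \widehat{\bbf}\|_X$, which is the desired Lipschitz estimate.

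\textbf{Main obstacle.} The only subtle point is the mismatch between the pointwise-in-time traces $\widehat{f}_{i,j}(t,0)$ and $\widehat{f}_{i,j}(t, L_j(t))$ appearing in the ODE and the purely Bochner $L^2((0,T); H^1)$ control provided by the $X$-norm. The trace inequality \eqref{eq:trace_theorem} together with Cauchy--Schwarz in time bridges exactly this gap; the fact that $L_j(t) \ge \ell_j > 0$ uniformly (via (H$_0$) and (H$_3$)(iii)) ensures that the constant in \eqref{eq:trace_theorem} can be chosen independent of $t \in [0,T]$, so no additional care is required for the moving endpoint at this stage of the argument.
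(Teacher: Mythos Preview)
Your argument is essentially the same as the paper's: subtract the two copies of \eqref{linear-ode}, use the Lipschitz hypotheses (H$_2$)--(H$_5$) on $\alpha_{i,j},\beta_{i,j},g_{i,j},h_j$, control the boundary traces via \eqref{eq:trace_theorem}, and close with Gronwall to obtain \eqref{lip-1}. The paper writes the differential inequality directly as $|(\delta\Lambda_j)'(t)|\le C\,|\delta\Lambda_j(t)| + C\,\sum_i |\delta\widehat f_{i,j}(t,L_j(t))|$ without making the weight $\phi$ explicit, whereas you track carefully that the coefficient in front of $|\delta\boldsymbol\Lambda|$ carries a factor $|\widehat f_{i,j}^{(1)}|$ and hence the Lipschitz constant depends on an a priori bound for the inputs; this is a genuine refinement, since the paper ultimately only applies the lemma on $\mathcal K$ where $0\le f_{i,j}\le 1$ and the traces are uniformly bounded anyway.
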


\begin{proof}

From Lemma \ref{lemma:lambda}, $\mathcal{B}_1$  is well defined. Let now $(\widehat \bbf_1^{(1)},\widehat \bbf_2^{(1)}
%, \widehat{\boldsymbol{L}}^{(1)}
),$ 
  $(\widehat \bbf_1^{(2)}, \widehat \bbf_2^{(2)}
  %, \widehat{\boldsymbol{L}}^{(2)}
  ) \in X$ and let $\boldsymbol{\Lambda}^{(1)}=: \mathcal{B}_1(\widehat \bbf_1^{(1)}, \widehat \bbf_2^{(1)}
  %, \widehat{\boldsymbol{L}}^{(1)}
  ) $ and $\boldsymbol{\Lambda}^{(2)}=: \mathcal{B}_1(\widehat \bbf_1^{(2)}, \widehat \bbf_2^{(2)}
  %, \widehat{\boldsymbol{L}}^{(2)}
  )$ be solutions to \eqref{linear-ode} satisfying the same initial condition \eqref{in-cond-Lambda}. We fix $t \in [0, T]$ and  consider 
    \begin{equation*}
        \begin{split}
        (\Lambda_{j}^{(a)})'(t)&=   \beta_{+,j}(\Lambda_j^{(a)}(t)) \,\widehat f_{+,j}^{(a)}(t, 1)- \alpha_{-,j}(\Lambda_j^{(a)}(t)) \,g_{-,j}(\widehat \bbf_{j}^{(a)}(t, 1)) \\
        & \qquad - \chi \, h_j(\Lambda_j^{(a)}(t), 1),
        \end{split}
            \end{equation*}
%{as well as }
 %   \begin{equation}
  %      \label{eq2}
   % \begin{split}
    %(\Lambda_{j}^{(2)})'(t) &=   \beta_{+,j}(\Lambda_j^{(2)}(t)) \widehat f_{+,j}^{(2)}(t, L_j(t))- \alpha_{-,j}(\Lambda_j^{(2)}(t)) g_{-,j}(\widehat \bbf_{j}^{(2)}(t, L_j(t)))- \chi h_j(\Lambda_j^{(2)}(t), L_j(t)). \\
   % \end{split}
    %\end{equation}
$a=1,2$. Taking the difference of the two equations, setting $\delta \Lambda_j:= \Lambda_j^{(1)}- \Lambda_j^{(2)}$, exploiting hypotheses (H$_2$)--(H$_5$) and summarizing the constants give
    \begin{equation*}
    \begin{split}
        & |(\delta \Lambda_j)'(t)| \le C
        |\delta \Lambda_j(t)| + C \l(|(\widehat f_{+,j}^{(1)}- \widehat f_{+,j}^{(2)})(t, 1)|+ |(\widehat f_{-,j}^{(1)}- \widehat f_{-,j}^{(2)})(t, 1)| \r),
%        & |\Lambda_j^{(1)}(s)- \Lambda_j^{(2)}(s)| \\
 %       &\le \int_0^s \beta_{+, j} |\widehat f_{+,j}^{(1)}- \widehat f_{+,j}^{(2)}|+ \alpha_{-, j} |g_{-,j}(\widehat f_{+,j}^{(1)}, \widehat f_{-,j}^{(1)})- g_{-,j}(\widehat f_{+,j}^{(2)}, \widehat f_{-,j}^{(2)})| \; dt \\
  %      & \le s \beta_{+, j} \|\widehat f_{+,j}^{(1)}- \widehat f_{+,j}^{(2)}\|_{L^2((0,T); L^2(\Om_j(t)))}+ s L_{g} \alpha_{-, j} \|\widehat{\boldsymbol{f}}_j^{(1)}- \widehat{\boldsymbol{f}}_j^{(2)} \|_{L^2((0, T); H^1(\Omega_j(t)))} \\
   %     & \le T \max\{\beta_{+, j}, L_{g} \alpha_{-, j}\} \|\widehat{\boldsymbol{f}}_j^{(1)}- \widehat{\boldsymbol{f}}_j^{(2)} \|_{L^2((0, T); H^1(\Omega_j(t)))}.
    \end{split}
    \end{equation*}
while the trace inequality  \eqref{eq:trace_theorem} and a Gronwall argument imply 
    \begin{align*}
    |\Lambda_j^{(1)}(t)- \Lambda_j^{(2)}(t)| & \le C\, \|\widehat{\boldsymbol{f}}_j^{(1)}- \widehat{\boldsymbol{f}}_j^{(2)} \|_{L^2(0, T; H^1(0, 1))^2} .
    %\\
    %& \le C \Big(\|\widehat{\boldsymbol{f}}_j^{(1)}- \widehat{\boldsymbol{f}}_j^{(2)} \|_{L^2((0, T); H^1(0, L_j(t)))^2}+ |\widehat L_j^{(1)}- \widehat L_j^{(2)}| \Big).
    \end{align*}
%where we have taken into account the fact that the equations satisfy the same initial condition. 
 %   
A similar argument holds for the  equation for $\Lambda_{\text{som}}$, and we eventually have
    \begin{equation}
    \label{lip-1}
    \|\boldsymbol{\Lambda}^{(1)}- \boldsymbol{\Lambda}^{(2)}\|_{C([0, T])^3} \le C \,\|(\widehat \bbf_1^{(1)}, \widehat \bbf_2^{(1)}
    %, \widehat{\boldsymbol{L}}^{(1)}
    )- (\widehat \bbf_1^{(2)}, \widehat \bbf_2^{(2)}
    %, \widehat{\boldsymbol{L}}^{(2)}
    )\|_X.
    \end{equation}
%Choosing $T>0$ in \eqref{lip-1} small enough in such a way that $T C_1< 1$ implies that $\mathcal{B}_1$ is a contraction.
\end{proof}

%Let now $(\Lambda_{\text{som}}, \Lambda_1,  \Lambda_2)$ be the unique solution to \eqref{linear-ode} obtained from Lemma \ref{lemma:lambda}. 
%We fix $j \in \{1, 2\}$ and consider equation \eqref{free-bdry2}. 

We next show the following existence result for equation \eqref{free-bdry2}.

\begin{lemma}
\label{lemma:L}

Let $\boldsymbol{\Lambda} \in C^1([0, T])^3$ be the unique solution to \eqref{linear-ode}. Then, there exists a unique
%\newline
$\boldsymbol{L}= (L_1, L_2) \in  C^1([0, T])^2$ that solves \eqref{free-bdry2} with initial condition $   L_j(0)= L_j^{(0)}$.
Furthermore, $\text{for all } t \in (0, T)$  it holds
    \begin{align}
    & L_{\emph{min}, j}  \le L_j(t) \le L_j^{(0)}+ T \,\|h_j\|_{L^{\infty}(\R^2)}, \label{dis-L} \\
      &  \frac{|L_j'(t)|}{L_j(t)}  \le \frac{\|h_j\|_{L^{\infty}(\R^2)}}{L_{\emph{min}, j}}. \label{rem:L}
\end{align}

\end{lemma}

\begin{proof}

The existence and uniqueness follow as before.
{The lower bound in \eqref{dis-L} can be deduced by applying Theorem \ref{thm:invariant_deimling} (\cite[Theorem 5.1]{Deimling}) in the appendix with $X=\R$, $\Omega = [L_{\text{min}, j},\infty)$ and $f=h_j$. Assumption (A1) in the theorem is satisfied as, due to (H$_3)$, the choice $\omega = K_{h_j}$ fulfils the requirements. For (A2), we note that the unit outward normal of the set $[L_{\text{min}, j},\infty)$ at $L_{\text{min},j}$ is $-1$ and that $h_j(s, L_{\text{min}, j})\ge 0$ for every $s\ge 0$ (again by (H$_3$)). This yields $(h_j(s, L_{\text{min}, j}),-1) = -h_j(s, L_{\text{min}, j}) \le 0$ as needed.  }
In order to prove the upper bound in \eqref{dis-L}, we fix $t \in (0,T)$, integrate \eqref{free-bdry2}, and use (H$_3$)-(i) to have
\begin{equation*}
    L_j(t)= L_j^{(0)}+ \int_0^t h_j(\Lambda_j(s), L_j(s)) \,\ds \le L_j^{(0)}+ T\, \|h_j\|_{L^{\infty}(\R^2)}.
    \end{equation*}
In addition, we observe that \eqref{free-bdry2} gives $|L_j'(t)| \le \|h_j\|_{L^{\infty}(\R^2)}$. Then, the fact that $L_j(t) \ge L_{\text{min}, j}$ allows us to conclude \eqref{rem:L}.
\end{proof}

\begin{lemma}
\label{lemma:B2}

The operator $\mathcal{B}_2 \colon C([0, T])^3 \to C([0, T])^2$ that maps $\boldsymbol{\Lambda} $ to the solution $\boldsymbol{L}$ to \eqref{free-bdry2} is Lipschitz continuous in the sense of
 \begin{equation}
        \label{lip-2}
        \|\boldsymbol{L}^{(1)}- \boldsymbol{L}^{(2)}\|_{C([0, T])^2} \le T\,
        \max_{j= 1,2}\bigl\{L_{h_j}\, e^{2\,T\,L_{h_j}} \bigr\}
         \|\boldsymbol{\Lambda}^{(1)}- \boldsymbol{\Lambda}^{(2)}\|_{C([0, T])^3},
    \end{equation}
where $L_{h_j}$ is the Lipschitz constant of $h_j$. If $T \,\max_{j= 1,2}\bigl\{L_{h_j}\, e^{2\,T\,L_{h_j}} \bigr\}< 1$, then $\mathcal{B}_2$ is contractive.
\end{lemma}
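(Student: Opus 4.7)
The plan is to establish \eqref{lip-2} by a direct Gronwall argument exploiting only the Lipschitz assumption (H$_3$)-(i) on $h_j$. The operator $\mathcal{B}_2$ is already well defined by the previous lemma, so all that remains is to estimate the difference of two outputs in terms of two inputs sharing the common initial datum $L_j^{(0)}$.

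First I would fix $\boldsymbol{\Lambda}^{(1)}, \boldsymbol{\Lambda}^{(2)} \in C([0,T])^3$, denote by $\boldsymbol{L}^{(1)}, \boldsymbol{L}^{(2)}$ the corresponding solutions of \eqref{free-bdry2}, and set $\delta L_j := L_j^{(1)} - L_j^{(2)}$, $\delta \Lambda_j := \Lambda_j^{(1)} - \Lambda_j^{(2)}$. Since both $L_j^{(1)}$ and $L_j^{(2)}$ start from $L_j^{(0)}$, integrating \eqref{free-bdry2} in time gives
\begin{equation*}
\delta L_j(t) = \int_0^t \bigl[h_j(\Lambda_j^{(1)}(s), L_j^{(1)}(s)) - h_j(\Lambda_j^{(2)}(s), L_j^{(2)}(s))\bigr] \ds.
\end{equation*}
Applying (H$_3$)-(i) (with Lipschitz constant $L_{h_j}$ in both arguments) and taking absolute values, I would deduce
\begin{equation*}
|\delta L_j(t)| \le L_{h_j}\int_0^t |\delta \Lambda_j(s)|\ds + L_{h_j}\int_0^t |\delta L_j(s)|\ds \le L_{h_j} T \,\|\boldsymbol{\Lambda}^{(1)}-\boldsymbol{\Lambda}^{(2)}\|_{C([0,T])^3} + L_{h_j}\int_0^t |\delta L_j(s)|\ds.
\end{equation*}

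Next I would apply the integral form of Gronwall's inequality to the function $t \mapsto |\delta L_j(t)|$, obtaining
\begin{equation*}
|\delta L_j(t)| \le L_{h_j} T\, e^{L_{h_j}T}\, \|\boldsymbol{\Lambda}^{(1)}-\boldsymbol{\Lambda}^{(2)}\|_{C([0,T])^3} \qquad \text{for all } t \in [0,T],
\end{equation*}
which is bounded above by $L_{h_j} T\, e^{2L_{h_j}T} \|\boldsymbol{\Lambda}^{(1)}-\boldsymbol{\Lambda}^{(2)}\|_{C([0,T])^3}$. Taking the supremum over $t$ and the maximum over $j=1,2$ yields the claimed estimate \eqref{lip-2}. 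The contraction statement then follows trivially whenever $T\max_{j}\{L_{h_j}e^{2TL_{h_j}}\} < 1$.

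I do not expect any significant obstacle in this proof: it is essentially a textbook Gronwall estimate for an ODE whose right-hand side is globally Lipschitz in both variables. The only minor care needed is to apply the Lipschitz bound consistently on $\R \times [\ell_j, \infty)$, which is where (H$_3$)-(i) is stated, and to note that the solutions stay in this set by the previous lemma.
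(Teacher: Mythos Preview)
Your proposal is correct and follows exactly the approach the paper indicates: the paper omits the proof, stating only that it ``works as for Lemma~\ref{lemma:B1}'', i.e., take differences, apply the Lipschitz bound on $h_j$, and conclude via Gronwall. Your argument does precisely this (and in fact yields the slightly sharper constant $T L_{h_j} e^{T L_{h_j}}$, which you then relax to match the stated $T L_{h_j} e^{2T L_{h_j}}$).
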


\begin{proof}

The proof works as for Lemma \ref{lemma:B1} so we omit it.
%
%Thanks to  Lemma \ref{lemma:L},  $\mathcal{B}_2$ is well defined. Next, fix $\boldsymbol{\Lambda}^{(1)}, \boldsymbol{\Lambda}^{(2)} \in C([0, T])^3$ and set $\boldsymbol{L}^{(1)}=: \mathcal{B}_2(\boldsymbol{\Lambda}^{(1)})$ as well as $\boldsymbol{L}^{(2)}=: \mathcal{B}_2(\boldsymbol{\Lambda}^{(2)})$, where they satisfy the same initial condition. Let  $t \in [0, T]$. Then we consider
 %   \[
  %      (L_j^{(1)})'(t)= h_j(\Lambda_j^{(1)}(t), L_j^{(1)}(t)) \quad \text{as well as} \quad
   %     (L_j^{(2)})'(t)= h_j(\Lambda_j^{(2)}(t), L_j^{(2)}(t)).
    %\]
%Taking the difference between the previous equations and exploiting an argument as in the proof of Lemma \ref{lemma:B1} gives 
%and exploiting the Lipschitz continuity of $h_j$ give
 %   \[
  %   |(L_j^{(1)})'(t)- (L_j^{(2)})'(t)| \le L_{h_j} |\Lambda_j^{(1)}- \Lambda_j^{(2)}|+ L_{h_j} |L_j^{(1)}- L_j^{(2)}|. 
   % \]
%A Gronwall argument implies that
 %   \begin{equation*}
  %  \begin{split}
   %    |L_j^{(1)}(t)- L_j^{(2)}(t)|
       %& \le \int_0^t L_{h_j} |\Lambda_j^{(1)}- \Lambda_j^{(2)}| e^{L_{h_j} (t-s)} ds \\
    %   & \le T L_{h_j} e^{2T L_{h_j}}  \|\Lambda_j^{(1)}- \Lambda_j^{(2)}\|_{L^\infty([0,T])}.
    %\end{split}
    %\end{equation*}
%Then, it is easily seen that \eqref{lip-2} follows.
\end{proof}

We next investigate the existence of solutions to system \eqref{weak2.1}--\eqref{weak2.2}.

\begin{theorem} 
\label{lemma:syst}

Let $\boldsymbol{\Lambda}$ and $ \boldsymbol{L} $ be the unique solution to \eqref{linear-ode} and \eqref{free-bdry2}, respectively. 
Then, there exists a unique solution $(\bbf_1, \bbf_2) \in X$ to \eqref{weak2.1}--\eqref{weak2.2} such that $f_{i,j}(t,y)\in [0,1]$ for a.e. $y \in (0, 1)$ and $t \in [0, T]$.
\end{theorem}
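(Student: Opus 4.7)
The plan is to combine a change of variables that fixes the moving interval with a Galerkin-plus-truncation scheme, and then to recover the box constraints by Stampacchia-type arguments tailored to the structural form of the fluxes and boundary conditions, so that the truncation is a posteriori inactive.

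First, I would remove the moving boundary by setting $y := x/L_j(t) \in (0,1)$ and $\tilde f_{i,j}(t,y) := f_{i,j}(t, L_j(t)y)$. Using the uniform bounds \eqref{dis-L}--\eqref{rem:L} and (H$_3$), the rescaled equations form a uniformly parabolic system on the fixed interval $(0,1)$, with bounded time-continuous coefficients depending on $L_j$, $L_j'$, $\Lambda_k$, plus an extra linear drift $\frac{L_j'(t)}{L_j(t)} y\,\partial_y \tilde f_{i,j}$. The boundary conditions transform accordingly and remain of nonlinear Robin type. On this fixed domain I would then truncate the interior nonlinearity to $\tau(f_{i,j})(1 - \tau(\rho_j))$, where $\tau(s) := \min\{1,\max\{s,0\}\}$, and extend $g_{i,j}$ to globally Lipschitz bounded functions that agree with the original ones on $[0,1]^2$. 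The resulting system is globally Lipschitz in $(f_{+,j}, f_{-,j})$, so existence and uniqueness in $(L^2((0,T); H^1(0,1)))^2$ with $\partial_t \in (L^2((0,T); H^1(0,1)'))^2$ follow from a standard Galerkin scheme and a Banach fixed point on a short time interval extended by uniform a priori bounds, with the trace inequality \eqref{eq:trace_theorem} used to absorb boundary contributions into the energy estimates.

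The central point is to upgrade this truncated solution to the box constraints, after which the truncation is inactive and the solution satisfies the original problem. For non-negativity I would test each equation with $-(f_{i,j})^-$: on $\{f_{i,j}\le 0\}$ the truncated interior drift $\tau(f_{i,j})(1-\tau(\rho_j))$ vanishes, while the boundary contributions are nonpositive thanks to $g_{-,j}(s,0)=g_{+,j}(0,s)=0$ from (H$_2$) and the nonnegativity of $\alpha_{i,j},\beta_{i,j}$ in (H$_4$)--(H$_5$); Gronwall then forces $(f_{i,j})^- \equiv 0$. For the saturation bound $\rho_j \le 1$ I would sum the two equations to obtain the equation for $\rho_j$, whose nonlinear convective flux $v_0(f_{+,j}-f_{-,j})(1-\rho_j)$ vanishes on $\{\rho_j=1\}$, and test with $(\rho_j - 1)^+$; the structural assumption $g_{i,j}(s,t)=0$ on $\{s+t=1\}$ in (H$_2$) kills the inflow boundary terms on the saturated set, and a second Gronwall argument yields $(\rho_j-1)^+ \equiv 0$. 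Combined with non-negativity this gives $0 \le f_{i,j} \le \rho_j \le 1$ almost everywhere, so the truncation was indeed inactive and the pair $(\bbf_1,\bbf_2)$ solves \eqref{weak2.1}--\eqref{weak2.2}.

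Uniqueness would follow by taking the difference of two solutions, testing the transformed equations with those differences, using that $s\mapsto s(1-\rho)$ is Lipschitz on $[0,1]$ now that both solutions are known to satisfy the box constraints, bounding the boundary terms via \eqref{eq:trace_theorem} and Young's inequality, and closing by Gronwall. The main obstacle is precisely the compatibility of the boundary conditions with the saturation constraint $\rho_j \le 1$: the Stampacchia argument for $(\rho_j-1)^+$ works only because (H$_2$) forces $g_{i,j}$ to vanish on $\{s+t=1\}$, ensuring no influx at a saturated end-point; without this structural design of the boundary fluxes, box preservation and hence the entire fixed-point programme would break down.
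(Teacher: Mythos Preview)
Your proposal is correct and follows essentially the same route as the paper: change of variables to the fixed interval, truncation of the convective nonlinearity, fixed-point existence for the truncated system, Stampacchia-type tests to recover $f_{i,j}\ge 0$ and $\rho_j\le 1$, and a Gronwall argument for uniqueness. The only cosmetic differences are that the paper truncates the whole product $(f_\pm(1-\rho))_{\mathrm{tr}}$ rather than each factor, and it carries out the box-constraint step with a $C^{1,1}$ regularisation $\eta_\eps$ of the positive part (testing with $-\eta_\eps'(-f_+)$ and $\eta_\eps'(\rho-1)$ and sending $\eps\to 0$) instead of using $-(f_+)^-$ and $(\rho-1)^+$ directly; both variants rely on the same structural hypotheses (H$_2$), (H$_4$), (H$_5$) at the boundary and lead to the same conclusion.
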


\begin{proof}

To simplify the notation, in this proof we will drop the use of the $j$-index
%. That is, we write $\bbf$ instead of $\bbf_j$, and the same applies to all the other terms. 
%and write  $\bbf=(f_+, f_-) $ and $L(t)$ instead of $ \bbf_j= (f_{+,j}, f_{-,j})$ and $L_j(t)$, respectively.
and, for the reader's convenience, we split  the proof in several steps.

\medskip

    %{Maybe... }
%We aim to solve equation \eqref{weak5} by means of the Leray-Schauder fixed point theorem. We first consider a regularized, linearized version of it. More precisely, for every $\eps>0$ and for fixed ${\bf z}= (z_+, z_-) \in (L^2((0, T); L^{\infty}(0, 1)))^2$ we aim to find a solution ${\bf w}= (w_+, w_-) \in (L^2((0, T); H^1((0, 1)))^2 $ to the following linear problem
 %   \begin{equation}
  %      \label{LM}
   %     a({\bf w}, {\bvarphi})= l_{\delta, {\bf z}}, \quad \forall \, {\bvarphi} \in (L^2((0,T); H^1(0,1)))^2,
    %\end{equation}
%where
 %   \[
  %  a({\bf w}, {\bvarphi})= \frac{D_T}{L_j(t)^2} \sum_{i= +,-}  \int_0^1 \partial_y w_i \cdot \partial_y \varphi_i \dy+ \eps \sum_{i= +, -} \int_0^1 w_i \varphi_i+ \partial_y w_i \cdot \partial \varphi_i \dy
 %   \]
%and
 %   \[
  %  \begin{split}
   % & l_{\delta, {\bf z}}= \delta \sum_{i= +,-} \int_0^1 \l(v_0 z_i (1- \phi \rho)- \frac{L_j'(t)}{L_j(t)} z_i y \r) \partial_y \varphi_i - \l(\partial_t z_i+ \frac{L_j'(t)}{L_j(t)} z_i \r) \varphi_i \dy \\
   % & \qquad + \delta \lambda \int_0^1 (z_- - z_+) \varphi_+ + (z_+ - z_-) \varphi_- \dy \\
%    & \qquad -\frac{\delta}{L_i(t)} \l(\beta_+ z_+ \varphi_+(1)+ \alpha_+ g(z_+, z_-) \varphi_+(0)- \alpha_- g(z_+, z_-) \varphi_-(1)- \beta_- z_- \varphi_-(0) \r).
%    \end{split}
 %    \]
%
\underline{Step 1: Approximation by truncation}.
Given a generic function $a$ we   introduce the truncation 
    \begin{equation}
       \label{truncation}
        a_{\text{tr}}=
        \begin{cases}
            a & \text{if } 0 \le a \le 1, \\
           0 & \text{otherwise}.
        \end{cases}
    \end{equation}
{We apply this to the non-linear transport terms $f_{\pm}\,(1-\rho)$ in \eqref{weak2} which yields (after summing up)}
    \begin{equation}
       \label{trunc}
        \begin{split}
            &\sum_{i= +,-}\int_0^1 \partial_t  f_i \,\varphi_i \dy+ \frac{D_T}{L^2(t)} \,\sum_{i= +,-}  \int_0^ 1 \partial_y  f_i \, \partial_y \varphi_i \dy \\
            &= \int_0^1 \frac{v_0}{L(t)} (f_+ \,(1-\rho))_{\text{tr}} \, \partial_y \varphi_+ - \frac{v_0}{L(t)}\, (f_-\,(1-\rho))_{\text{tr}} \, \partial_y \varphi_- \dy \\
            & \quad - \frac{L'(t)}{L(t)}\,   \sum_{i= +,-} \int_0^1    \left(y \,f_i \,\partial_y \varphi_i +  f_i\, \varphi_i \right)\dy+ \int_0^1 \lambda \,[(f_- - f_+)\, \varphi_+ + (f_+- f_-)\,\varphi_-]  \dy \\
            & \quad +\frac{1}{L(t)} \Big(-  \beta_{+}(\Lambda(t)) \, f_{+}(t, 1)\,\varphi_+(1) + \alpha_{+}(\Lambda_{\text{som}}(t))\, g_{+}(\bbf(t, 0))\,\varphi_+(0)  \\
            & \quad\qquad +\alpha_{-}(\Lambda(t))\, g_{-}(\bbf(t, 1))\,\varphi_-(1) - \beta_{-}(\Lambda_{\text{som}}(t)) \, f_{-}(t, 0)\,\varphi_-(0)\Big).
        \end{split}
    \end{equation}
We solve \eqref{trunc} by means  of  the Banach fixed point theorem. We follow  \cite{Jan-chemotaxis},  pointing out that a similar approach has been used also in \cite{Greta-Jan-Alois}, yet in a different context.

Let us set $Y:= (L^{\infty}((0,T); L^2(0,1)))^2$ and introduce the following nonempty, closed set
    \[
    \mathcal M= \l\{{\bbf}= (f_+, f_-) \in Y: \, \|{\bbf}\|_{Y} \le C_{\mathcal M}\r\},
    \]
with $T, C_{\mathcal M}>0$ to be specified. Then we define the mapping $\Phi\colon \mathcal M \to Y$ such that $\Phi(\widetilde {\bbf})= {\bbf}$
where, for fixed $\widetilde \bbf \in \mathcal{M}$, $\bbf$ solves the following linearized equation (cf. \cite[Chapter III]{Lieberman})
    \begin{equation}
        \label{weak5-linear}
        \begin{split}
            &\sum_{i= +,-}\int_0^1 \partial_t  f_i \,\varphi_i \dy+ \frac{D_T}{L^2(t)} \,\sum_{i= +,-}  \int_0^ 1 \partial_y  f_i \, \partial_y \varphi_i \dy \\
            &= \int_0^1 \frac{v_0}{L(t)} \,(\widetilde f_+\, (1-\widetilde \rho))_{\text{tr}} \, \partial_y \varphi_+ - \frac{v_0}{L(t)} \,(\widetilde f_-\,(1-\widetilde \rho))_{\text{tr}} \, \partial_y \varphi_- \dy \\
            & \quad - \frac{L'(t)}{L(t)}  \, \sum_{i= +,-} \int_0^1    y\, f_i \,\partial_y \varphi_i +  f_i \,\varphi_i \dy+ \int_0^1 \lambda\, [(f_- - f_+)\, \varphi_+ +(f_+- f_-)\,\varphi_-]  \dy \\
            & \quad +\frac{1}{L(t)}\, \Big(-\beta_{+}(\Lambda(t)) \, f_{+}(t, 1)\,\varphi_+(1) + \alpha_{+}(\Lambda_{\text{som}}(t))\, g_{+}(\bbf(t, 0))\,\varphi_+(0)  \\
            & \quad\qquad +\alpha_{-}(\Lambda(t))\, g_{-}(\bbf(t, 1))\,\varphi_-(1) - \beta_{-}(\Lambda_{\text{som}}(t)) \, f_{-}(t, 0)\,\varphi_-(0)\Big).
        \end{split}
    \end{equation}

\medskip

\noindent \underline{Step 2:  $\Phi$ is self-mapping}.  
We show that
   \begin{equation}
      \label{sm-1}
        \|{\bbf}\|_Y \le C_{\mathcal M}.
    \end{equation}
We choose $\varphi_i= f_i$ in \eqref{weak5-linear} and estimate the several terms appearing in the resulting equation separately. 
From \eqref{dis-L}, on the left-hand side we have
    \begin{equation*}
    \begin{split}
        & \frac{1}{2} \frac{\textup{d}}{\dt} \sum_{i= +,-} \int_0^1 |f_i|^2  \dy+ \frac{D_T}{L^2(t)} \sum_{i= +,-}  \int_0^ 1 |\partial_y  f_i|^2 \dy \\
        &  \ge \frac{1}{2} \frac{\textup{d}}{\dt} \sum_{i= +,-} \int_0^1 |f_i|^2  \dy+ \frac{D_T}{(L^{(0)}+ T\, \|h\|_{L^\infty(\R)})^2} \sum_{i= +,-}  \int_0^ 1 |\partial_y  f_i|^2 \dy.
        \end{split}
    \end{equation*}
On the right-hand side we first use equation \eqref{rem:L} along with  Young's inequality for some $\eps_1>0$ and the fact that $y \in (0,1)$ to achieve
    \begin{equation*}
        \sum_{i= +,-} \int_0^1 \frac{L'(t)}{L(t)} \,y\,f_i\,\partial_y f_i \dy \le \sum_{i= +,-} \l(\eps_1 \,\|\partial_y f_i\|_{L^2(0,1)}^2+ \frac{\|h\|_{L^\infty(\R)}^2}{2\,\eps_1 \,L_{\text{min}}^2} \|f_i\|_{L^2(0,1)}^2 \r),
    \end{equation*}
while \eqref{rem:L} once again gives
    \[
    -\frac{L'(t)}{L(t)} \sum_{i=+,-} \int_0^1 f_i^2 \dy \le \frac{\|h\|_{L^\infty(\R^2)}}{L_{\text{min}}} \sum_{i=+,-} \|f_i\|_{L^2(0,1)}^2.
    \]
 On the other hand, \eqref{dis-L},  Young's inequality for some $\eps_2>0$ and \eqref{truncation} give
    \begin{equation*}
     \pm\int_0^1 \frac{v_0}{L(t)}  \bigl(\widetilde f_i \,(1- \widetilde \rho)\bigr)_{\text{tr}}\, \partial_y f_i \dy \le C  + \eps_2 \|\partial_y f_i\|_{L^2(0,1)}^2.
    \end{equation*}
We further observe that
    \begin{equation*}
        \lambda \int_0^1 (f_- - f_+)\, f_+ + (f_+ - f_-)\,f_- \dy= -\lambda \int_0^1 (f_+ + f_-)^2 \dy \le 0.
    \end{equation*}
Finally we estimate the boundary terms. We use hypotheses (H$_2$), (H$_4$), (H$_5$), and equation \eqref{dis-L} together with  Young's inequality with some $\eps_3, \dots, \eps_6>0$ and the trace inequality \eqref{eq:trace_theorem} to achieve
    \begin{equation*}
    \begin{split}
     \frac{1}{L(t)}\, \beta_+(\Lambda(t)) \,f_+^2(t, 1)  & \le C \,\|f_+\|_{L^2(0,1)}^2+ \eps_3 \, \|\partial_y f_+\|_{L^2(0,1)}^2, \\
       \frac{1}{L(t)} \,\alpha_{+}(\Lambda_{\text{som}}(t))\,g_{+}(\widetilde \bbf_{+}(t, 0))\,f_+(t, 0)  & 
         \le C\,\|f_+\|_{L^2(0,1)}^2+ \eps_4 \, \|\partial f_+\|^2_{L^2(0,1)}, \\
     \frac{1}{L(t)} \,\alpha_{-}(\Lambda(t))\,g_{-}( \widetilde \bbf_{+}(t, 1))\,f_-(t,1) & \le C \, \|f_-\|_{L^2(0,1)}^2 + \eps_5\,\|\partial_y f_-\|^2_{L^2(0,1)}, \\
      \frac{1}{L(t)} \, \beta_{-}(\Lambda_{\text{som}}(t)) \,f_{-}^2(t, 0) & \le C \,
        \|f_-\|_{L^2(0,1)}^2
        + \eps_6\, \|\partial_y f_-\|_{L^2(0,1)}^2.
    \end{split}
    \end{equation*}
%where $C>0$ is a generic constant depending only on the data and on $C_{\text{e}}$.
%    \begin{equation*}
 %       \begin{split}
  %      & \int_{\partial(0,1)} \l(\beta_+  f_+ -\alpha_+(\Lambda_{\text{som}}) g_+( f_+,  f_-)\r) f_+ \; d\sigma_y \\
   %     & \le  \beta_+ \|f_+\|_{L^2(\partial (0,1))}^2+ \frac{1}{2} \|\alpha_+(\Lambda_{\text{som}})\|_{L^\infty(\R)}^2  \|g_+(\tilde f_+, \tilde f_-)\|_{L^{\infty}(\partial (0,1))}^2+ \frac{1}{2} \|f_+\|_{L^2(\partial (0,1))}^2  \\
    %    & \le C_{\text{tr}}^2 \l(\beta_+ + \frac{1}{2}\r)\|f_+\|_{L^2(0,1)}^2+ \frac{1}{2} \|\alpha_+(\Lambda_{\text{som}})\|_{L^\infty(\R)}^2  \|g_+(\tilde f_+, \tilde f_-)\|_{L^{\infty}(\partial (0,1))}^2,
     %   \end{split}
%    \end{equation*}
%as well as 
 %   \[
  %  \begin{split}
   % & \int_{\partial(0,1)} \l(\beta_-  f_- - \alpha_- g_-( f_+,  f_-)\r) f_- \; d\sigma_y \\
%    & \qquad \le C_{\text{tr}}^2 \l(\beta_-+ \frac{1}{2}\r) \|f_-\|_{L^2(0,1)}^2+
 %   \alpha_-^2  \|g_-(\tilde f_+, \tilde f_-)\|_{L^{\infty}(\partial (0,1))}^2,
  %  \end{split}
   % \]
We choose $\eps_{\kappa}$, $\kappa=1, \dots, 6$, in such a way that all the terms of the form $\|\partial_y f_i\|_{L^2(0,1)}$ can be absorbed on the left-hand side of  \eqref{weak5-linear}, which  simplifies to 
%    \[
 %   \eps_1= \eps_2= \frac{D_T}{4(L^{(0)}+ T \|h\|_{L^\infty(\R)})^2}
  %  \]
%and from the previous calculations we see that 
%Then equation simplifies to
    \begin{equation*}
%        \label{lin}
         \frac{\textup{d}}{\dt} \sum_{i= +,-}  \int_0^1 |f_i|^2  \dy \le C \sum_{i= +,-} \|f_i\|_{L^2(0,1)}^2+ C.
    \end{equation*}
%where $C_1$  and $C_2$ depend only on the data.
%    \[
 %   \begin{split} 
  % \bar C&= \bar C(T, D_T, \|h\|_{L^\infty(\R)}), \\
   % C_1&= C_1\l(\|h\|_{L^\infty(\R)}, L_{\text{min}}, L^{(0)}, T,  \|\beta_+\|_{L^\infty(\R)}, \beta_-, C_{\text{tr}}, D_T\r) \\
    %\text{and } C_2&= C_2\l(\rr{C_{\mathcal{M}}}, v_0, 
    %D_T, L^{(0)}, L_{\text{min}}, T, \|h\|_{L^\infty(\R^2)}, \|\alpha_+\|_{L^\infty(\R)}, \|\alpha_-\|_{L^\infty(\R)},  \|g_{+}\|_{L^{\infty}(\R^2)}, \|g_{-}\|_{L^{\infty}(\R^2)}\r).
    %\end{split}
    %\]
We then use a Gronwall argument  to infer
%    \begin{equation*}
 %   \begin{split}
  %  \sum_{i= +,-} \|f_i(t, \cdot)\|_{L^2(0,1)}^2 &  \le C\,e^{C_1 T}\, \l(C_2\, T+ \sum_{i= +, -} \|f_i^0\|_{L^2(0,1)}^2 \r),
   % \end{split}
    %\end{equation*}
%where $C>0$ does not depend on $\boldsymbol{f}$. Since the latter holds for every $t \in (0, T)$, it follows that
    \[
    \sup_{t \in (0, T)} \|f_i(t, \cdot)\|_{L^2(0,1)}^2 \le C=: C_{\mathcal M}^2.  
    \]
%where $\widetilde C $ depends on the data but not on $\boldsymbol{f}$. 
This implies that \eqref{sm-1} is satisfied and therefore $\Phi$ is self-mapping.

\medskip

\noindent \underline{Step 3: $\Phi$ is a contraction}.
Let $\widetilde {\bbf}_1, \widetilde {\bbf}_2 \in \mathcal M$ and let ${\bbf_1}=: \Phi(\widetilde {\bbf}_1)$ and ${\bbf_2}=:  \Phi(\widetilde {\bbf}_2)$ be two solutions to \eqref{weak5-linear} with the same initial datum $\bbf^0$. We then consider the difference of the  corresponding equations and  choose $\varphi_i= f_{i, 1}- f_{i, 2}$.
Reasoning as in Step 2
%(this time we also take into account the Lipschitz continuity of $g_+$ and $g_-$)
and exploiting the Lipschitz continuity of the functions 
    \begin{equation}
        \label{a+}
      \R^2 \ni (a,b)  \mapsto (a\,(1- a- b))_{\text{tr}} \quad \text{and} \quad  \R^2 \ni (a,b)  \mapsto (b\,(1- a- b))_{\text{tr}}
    \end{equation}
     we get
    \begin{equation*}
%        \label{linear3}
        \begin{split}
          &  \frac{\textup{d}}{\dt} \sum_{i= +,-}  \|f_{i, 1}- f_{i, 2}\|_{L^2(0,1)}^2 \le  C  \sum_{i= +,-} \Big(\|f_{i, 1}- f_{i, 2}\|_{L^2(0,1)}^2+\|\widetilde f_{i, 1}- \widetilde f_{i, 2}\|_{L^2(0,1)}^2\Big).
        \end{split}
    \end{equation*}
%where  $\widetilde C, \bar C $ depend  on the data. 
%    \[
 %   \begin{split}
  %      C_1&= C_1(\|h\|_{L^\infty(\R)}, L_{\text{min}}, D_T, \lambda, C_{\text{tr}}, \|\alpha_+(\Lambda_{\text{som}}) \|_{L^{\infty}(\R)}, \alpha_-, \beta_+, \beta_-, L_g), \\
   %     C_2&= C_2(v_0, D_T, \|\alpha_+(\Lambda_{\text{som}}) \|_{L^{\infty}(\R)}, \alpha_-, C_{\text{tr}}, L_g, L_{\text{min}}).
%    \end{split}
 %   \]
% $$
 %(\tilde f_+, \tilde f_-) \mapsto (\tilde f_+(1-\tilde \rho))_{tr}
  %$$
  %then
  %$$
  %|(\tilde f_{+,1}(1-\tilde \rho_1))_{tr} - (\tilde f_{+,2}(1-\tilde \rho_2))_{tr}| \le L ((f_{+,1}-f_{+,2})) + (f_{-,1}-f_{-,2}))
  %$$
%\begin{align*}
%((\tilde f_{+,1}(1-\tilde \rho_1))_{tr} - (\tilde f_{+,2}(1-\tilde \rho_2))_{tr})\partial_y(f_{+,1}-f_{+,2}) 
%\le L(((f_{+,1}-f_{+,2})) + (f_{-,1}-f_{-,2})))(\partial_y(f_{+,1}-f_{+,2}))\\ 
%\le L^2 ((f_{+,1}-f_{+,2}))^2+ ((f_{+,1}-f_{+,2}))^2 +  2(\partial_y(f_{+,1}-f_{+,2}))^2   
% \end{align*}
Again by means of a  Gronwall argument  we have
%    \[
 %   \begin{split}
  %     & \sum_{i= +,-} \|(f_{i, 1}- f_{i, 2})(t, \cdot)\|_{L^2(0,1)}^2 
       %+  \sum_{i= +,-} \int_0^t \| \partial_y  (f_{i, 1}- f_{i, 2})\|_{L^2(0,1)}^2 \textup{d}s \\
        %& \le C_2\,e^{C_1T} \sum_{i= +,-} \int_0^T \|\widetilde f_{i, 1}- \widetilde f_{i, 2}\|_{L^2(0,1)}^2 \textup{d}s \\
   %      \le C_2\,T\, e^{C_1T} \sum_{i= +,-}  \|\widetilde f_{i, 1}- \widetilde f_{i, 2}\|_{L^{\infty}((0,T); L^2(0,1))}^2,
    %\end{split}
    %\]
%for all $t \in (0,T)$, and in turn
    \[
     %  \begin{split}
        \sum_{i= +,-} \|(f_{i, 1}- f_{i, 2})(t, \cdot)\|_{L^2(0,1)}^2 
       %+  \sum_{i= +,-} \int_0^t \| \partial_y  (f_{i, 1}- f_{i, 2})\|_{L^2(0,1)}^2 \textup{d}s \\
        %& \le C_2\,e^{C_1T} \sum_{i= +,-} \int_0^T \|\widetilde f_{i, 1}- \widetilde f_{i, 2}\|_{L^2(0,1)}^2 \textup{d}s \\
         \le C\,T\, e^{CT} \sum_{i= +,-}  \|\widetilde f_{i, 1}- \widetilde f_{i, 2}\|_{L^{\infty}(0,T; L^2(0,1))}^2,
 %   \end{split}
    \]
and then $\Phi$ is a contraction if  $T>0$ is small enough so that $ C\, T\, e^{C\, T}< 1$. Then Banach's fixed point theorem applies and we obtain a solution $\bbf \in (L^{\infty}(0,T; L^2(0,1)))^2 $ to \eqref{trunc}. 
A  standard regularity theory then gives  $\bbf \in (L^2(0,T; H^1(0,1)))^2$, with $\partial_t \bbf \in (L^2(0,T; H^1(0,1)')^2$.

\medskip

\underline{Step 4: Box constraints}. We  show that such $\bbf$ obtained in Step 4  is actually a solution to \eqref{weak2}, because it satisfies the box constraint $ f_+, f_- \ge 0$ and $\rho \le 1$.

We start by showing that $f_+ \ge 0$, and to this end we consider only the terms involving the $\varphi_+$-functions in \eqref{trunc}, that is, 
%In order to show this property first for $f_+$, we start  from equation \eqref{weak4.1} where we drop the bar symbols and replace $\bar f_+(1-\bar \rho)$ with $f_+(1- \rho)_{\text{tr}}$. This gives
    \begin{equation}
        \label{eq-rho}
        \begin{split}
        &\int_0^1 \partial_t  f_+\,\varphi_+ \dy+ \frac{D_T}{L^2(t)}\, \int_0^1 \partial_y  f_+ \, \partial_y \varphi_+ \dy \\
            &=  \int_0^1 \Big( \frac{v_0}{L(t)} \, \bigl(f_+ \,(1- \rho)\bigr)_{\text{tr}} - \frac{L'(t)}{L(t)}\, y\, f_+ \Big) \partial_y \varphi_+ + \Big(\lambda\,( f_- -  f_+)- \frac{L'(t)}{L(t)} \,f_+ \Big)\,\varphi_+ \dy \\
            & \qquad +\frac{1}{L(t)}\, \Big(-  \beta_{+}(\Lambda(t))\,f_{+}(t, 1) \, \varphi_+(1)+ \alpha_{+}(\Lambda_{\text{som}}(t))\,g_{+}( \bbf(t, 0))\,\varphi_+(0)\Big).
            % &\int_0^1 \partial_t \rho \varphi \dy+ \frac{D_T}{L^2(t)}  \int_0^ 1 \partial_y  \rho  \partial_y \varphi \dy \\
        %    &= \int_0^1 \frac{L'(t)}{L(t)} y \partial_y \rho \varphi \;dy+ \int_0^1 \frac{v_0}{L(t)}(f_+- f_-)(1-  \rho_{\text{tr}}) \partial_y \varphi \;dy  \\
         %   & \qquad+  \bigl(\beta_{+}(\Lambda)  f_{+}(t, 1)-\alpha_{-}(\Lambda) g_{-}( f_{+}(t, 1),  f_{-}(t, 1))\bigr) \varphi(1) \\
          %  & \qquad + \bigl( \beta_{-}  f_{-}(t, 0) - \alpha_{+}(\Lambda_{\text{som}}) g_{+}( f_{+}(t, 0),  f_{-}(t,0)) \bigr)\varphi(0).
%            & \qquad + \int_{\partial(0,1)} \l(\beta_+  f_+ + \beta_-  f_- - \alpha_+(\Lambda_{\text{som}}) g_+(f_+, f_-) + \alpha_- g_-(f_+, f_-) \r) \varphi \;d\sigma_y.
        \end{split}
    \end{equation}
For every $\eps>0$ we consider the  function $\eta_\eps \in W^{2, \infty}(\R)$ given by
    \begin{equation}
     \label{der-eta}
%        \label{eq:eta}
    \eta_\eps(u)=
%    \eta_\eps(s)= \frac{s^2}{4\eps} \quad \text{for all } s \in B_{2\eps}(0),
    \begin{cases}
        0 & \text{if } u \le 0, \\
       \displaystyle \frac{u^2}{4\eps} & \text{if } 0< u \le 2\eps, \\
      u- \eps & \text{if } u > 2\eps,
    \end{cases}
    \qquad \text{with} \quad
   \eta''_\eps(u)= %\frac{1}{2\eps}
   \begin{cases}
       0 & \text{if } u \le 0, \\
      \displaystyle \frac{1}{2\eps} & \text{if } 0< u \le 2\eps, \\
      0 & \text{if } u > 2\eps.
  \end{cases}
    \end{equation}
% Note that $\eta_\eps$ is a regularization of $u^+$. Furthermore,  we have
%     \begin{equation}
%         \label{der-eta}
%         \eta'_\eps(u)= 
%         %\frac{s}{2\eps}
%  \begin{cases}
%     0 & \text{if } u \le 0, \\
%    \displaystyle \frac{u}{2\eps} & \text{if } 0< u \le 2\eps, \\
%   1 & \text{if } u > 2\eps,
%  \end{cases}
%     \quad \text{as well as} \quad
%    \eta''_\eps(u)= %\frac{1}{2\eps}
%    \begin{cases}
%        0 & \text{if } u \le 0, \\
%       \displaystyle \frac{1}{2\eps} & \text{if } 0< u \le 2\eps, \\
%       0 & \text{if } u > 2\eps.
%   \end{cases}
%     \end{equation}
%
%
Next we choose $\varphi_+= -\eta_\eps'(-f_+)$   and observe that by the chain rule we have $\partial_y \varphi_+= \eta_\eps''(-f_+)\,\partial_y f_+$. Using such $\varphi_+$ in \eqref{eq-rho} gives
    \begin{equation*}
%        \label{eq-rho1}
        \begin{split}
        & -\int_0^1 \partial_t f_+\, \eta_\eps'(-f_+) \dy+ \frac{D_T}{L(t)^2}  \int_0^ 1 \eta_\eps''(-f_+)\, |\partial_y  f_+|^2 \dy \\
            & =  \int_0^1 \Big(\frac{v_0}{L(t)} \, \bigl(f_+\, (1- \rho)\bigr)_{\text{tr}} - \frac{L'(t)}{L(t)}\, y\, f_+ \Big)  \,\eta''_\eps(-f_+) \,\partial_y f_+  
            - \Big(\lambda\,(f_- - f_+) - \frac{L'(t)}{L(t)} \,f_+ \Big)\, \eta'_\eps(-f_+) \dy \\
            & \qquad + \frac{1}{L(t)} \big( \beta_+(\Lambda(t))\, f_+(t,1)\, \eta_\eps'(-f_+(t, 1)) -\alpha_+(\Lambda_{\text{som}}(t)) \,g_+(\bbf(t,0))\,\eta_\eps'(-f_+(t,0))\big).
%            & \qquad - \int_{\partial(0,1)} \l(\beta_+  f_+ + \beta_-  f_- - \alpha_+(\Lambda_{\text{som}})g_+(f_+, f_-) + \alpha_- g_-(f_+, f_-) \r) \eta_\eps'(-\rho) \; d\sigma_y.   
        \end{split}
    \end{equation*}
Thanks to   Young's inequality with a suitable $\kappa>0$ and to \eqref{truncation} we have
    \begin{equation*}
   \begin{split}
    &\int_0^1 \frac{v_0}{L(t)}\, \bigl(f_+\,(1-  \rho)\bigr)_{\text{tr}}\, \eta_\eps''(-f_+)\,\partial_y f_+ \dy \\
      &\qquad \le \kappa \int_0^1 \eta_\eps''(-f_+)\,|\partial_y f_+|^2 \dy+ \frac{1}{4\,\kappa} \int_0^1 \eta_\eps''(-f_+) \,\Big(\frac{v_0}{L(t)}\Big)^2 \,\bigl(f_+\, (1- \rho)\bigr)_{\text{tr}}^2 \dy,
    \end{split} 
    \end{equation*}
as well as
    \begin{align*}
       &  -\int_0^1 \frac{L'(t)}{L(t)}\, y\, f_+ \eta_\eps''(-f_+)\, \partial_y f_+ \dy \\
      & \qquad   \le \kappa \int_0^1 \eta_\eps''(-f_+)\, |\partial_y f_+|^2 \dy+ \frac{1}{\kappa} \,\Big(\frac{L'(t)}{L(t)}\Big)^2 \int_0^1 \eta_\eps''(-f_+)\, f_+^2 \dy,
    \end{align*}
where $\kappa$ depends on the lower and upper bounds on $L(t)$, see Lemma \ref{lemma:L}.
Choosing $\kappa$ sufficiently small and taking into account that the term involving $\alpha_+$ is non-negative we obtain
    \begin{equation}
       \label{eq-rho2}
      \begin{split}
           & \frac{\textup{d}}{\dt} \int_0^1 \eta_\eps(-f_+) \dy = -\int_0^1 \partial_t f_+ \,\eta_\eps'(-f_+) \dy \\
          % & \le -C  \int_0^ 1 \eta_\eps''(-f_+)\,|\partial_y  f_+|^2 \dy +C_1 \int_0^1 \eta_\eps''(-f_+) \Big(\frac{v_0}{L(t)}\Big)^2 \bigl(f_+\, (1- \rho)\bigr)_{\text{tr}}^2 \dy \\
          % & \qquad -\int_0^1 \l(\lambda(f_- - f_+)+ \frac{L'(t)}{L(t)}\,y\,\partial_y f_+ \r) \eta'_\eps(-f_+) \dy + \beta_+(\Lambda)\,f_+(t,1)\,\eta_\eps'(-f_+(t, 1)) \\
           & \le C \int_0^1  \, \eta_\eps''(-f_+) \Big[\Big(\frac{v_0}{L(t)}\Big)^2\, \bigl(f_+\, (1- \rho)\bigr)_{\text{tr}}^2+ \Big(\frac{L'(t)}{L(t)}\Big)^2\, f_+^2\Big]  -\l(\lambda\,(f_- - f_+)- \frac{L'(t)}{L(t)}\, f_+ \r) \eta'_\eps(-f_+) \dy \\
           & \qquad + \frac{1}{L(t)}\, \beta_+(\Lambda(t))\,f_+(t,1)\,\eta_\eps'(-f_+(t, 1)).
           %-\int_0^1 \frac{L'(t)}{L(t)} y \partial_y \rho \eta'_\eps(-\rho) \;dy \\
           %& \qquad + C_2 \int_0^1 \eta_\eps''(-\rho) \l(\frac{v_0}{L(t)}\r)^2 (f_+- f_-)^2 \dy \\
            %& \qquad - \int_{\partial(0,1)} \l(\beta_+  f_+ + \beta_-  f_- - \alpha_+(\Lambda_{\text{som}}) g_+(f_+, f_-) + \alpha_- g_-(f_+, f_-) \r) \eta_\eps'(-\rho) \; d\sigma_y.   
       \end{split}
   \end{equation}
%  where the last step follows from the fact that $\displaystyle -C  \int_0^ 1 \eta_\eps''(-f_+)\,|\partial_y  f_+|^2 \dy \le 0$.
 % 
To gain some sign information on the right-hand side of \eqref{eq-rho2}, we introduce the set
    \begin{equation}
        \label{Om-eps}
    \Omega_\eps:= \{y \in (0,1): \, 0 < f_+(t, y), f_-(t, y) \le 2\eps\}.
    \end{equation}
We first use \eqref{dis-L}, \eqref{der-eta}, and the Lipschitz continuity of  \eqref{a+}
%the fact that, according to \eqref{truncation} and \eqref{Om-eps}, if $ 0 \le f_+ \le 2\eps$ then $0 \le \bigl(f_+\bigr)_{\text{tr}}$ too, 
to have
%First of all we use the fact that  $0 \le f_+, f_- \le 2\eps$ in $\Om_\eps$ and \eqref{dis-L} to have
%\color{magenta}
%$$
%g_+: (f_+,f_-) \mapsto (f_+(1-f_+-f_-))
%$$
%$$
%|g_+(f_+,f_-) - g_+(0,f_-)| \le L |f_+|
%$$
    \begin{equation*}
\begin{split}
   &  \int_{\Omega_\eps} \eta_\eps''(-f_+)\, \Big(\frac{v_0}{L(t)}\Big)^2 \,\bigl(f_+\,(1-\rho)\bigr)_{\text{tr}}^2\; \dy \\
   &= \int_{\Omega_\eps} \eta_\eps''(-f_+)\, \Big(\frac{v_0}{L(t)}\Big)^2 \,\bigl(f_+\,(1-\rho)\bigr)_{\text{tr}} -(0 \cdot (1-\rho)\bigr)_{\text{tr}}\bigr) ^2  \dy 
     \le \frac{v_0^2}{L_{\text{min}}^2}\,2 \eps\, |\Omega_\eps|=: \tilde c_1\,\eps,
    \end{split}
    \end{equation*}
%    \[
 %   \int_{\Om_\eps} \eta_\eps''(-f_+) \l(\frac{v_0}{L(t)}\r)^2 \bigl(f_+\bigr)_{\text{tr}}^2 \dy \le \frac{v_0^2}{L^2_{\text{min}}}\,2 \eps\, |\Om_\eps|=: \tilde c_1\,\eps.
  %  \]
as well as
    \begin{align*}
        \int_{\Omega_\eps} \eta_\eps''(-f_+)\, \Big(\frac{L'(t)}{L(t)}\Big)^2\, f_+^2 \dy \le \frac{\|h\|_{L^\infty(\R^2)}^2}{L_{\text{min}}^2}\, |\Omega_\eps|\, 2 \eps:= \tilde c_2\, \eps,
    \end{align*}
and
    \begin{align*}
        \frac{L'(t)}{L(t)} \int_{\Omega_\eps} f_+ \,\eta_\eps'(\-f_+) \dy \le \frac{\|h\|_{L^\infty(\R^2)}^2}{L_{\text{min}}^2}\, |\Omega_\eps|\, 2 \eps= \tilde c_2\, \eps.
    \end{align*}
On the other hand, from \eqref{Om-eps} it follows that $-2\eps< f_+- f_- \le 2\eps$, which implies 
    \begin{equation*}
    -\int_{\Omega_\eps} \lambda\,(f_- - f_+) \dy \le 2\eps\,\lambda\,|\Omega_\eps|=: \tilde c_3\,\eps.
    \end{equation*}
%We first observe that 
 %   \[
  %  \int_{\Om_\eps} \rho \eta_\eps'(-\rho) \;dy \le 2\eps |\Om_\eps| \to 0 \qquad \text{as } \eps \to 0,
   % \]
%while for the second integral we use \eqref{dis-L} and \eqref{rem:L} to achieve
    %\[
%    \begin{split}
 %        \int_0^1 \eta_\eps''(-\rho) \biggl[\l(\frac{v_0}{L(t)}\r)^2 (f_+- f_-)^2+ \l(\frac{L'(t)}{L(t)}\r)^2 \rho^2\biggr] \, dy 
  %       \le \l[\l(\frac{v_0}{L_{\text{min}}}\r)^2 + \frac{\|h\|_{L^\infty(\R)}^2}{L_{\text{min}}^2} \r] 2\eps |\Om_\eps| \to 0.
%    \end{split}
%    \int_{\Om_\eps} \rho^2 \eta_\eps''(\rho-1) \;dy \le \frac{(2\eps)^2}{2\eps}  |\Om_\eps| \to 0 \qquad \text{as } \eps \to 0.
   % \]
%We further observe that $\rho \in \Om_\eps$ implies that $0 \le f_+, f_- \le 2\eps$, which gives
 %   \[
  %  \int_{\Om_\eps} \eta_\eps''(\rho-1) v_0^2 (f_+- f_-)^2 \;dy \le v_0^2 \frac{(4\eps)^2}{2\eps} |\Om_\eps| \to 0 \qquad \text{as } \eps \to 0.
   % \]
Concerning the boundary term, we note that by definition the function $\eta_\eps'$ is nonzero only if its argument is nonnegative, which in any case gives
    \[
    \frac{1}{L(t)}\, \beta_+(\Lambda)\,f_+(t,1)\, \eta_\eps'(-f_+(t, 1)) \le 0.
    \]
Summarizing, from \eqref{eq-rho2} we have
    \begin{align*}
        \frac{\textup{d}}{\textup{d} t} \int_0^1 (f_+)_- \dy= \lim_{\eps \to 0} \frac{\textup{d}}{\textup{d} t} \int_{\Omega_\eps} \eta_\eps(-f_+) \dy \le 0,
    \end{align*}
which implies
    \begin{align*}
        \int_0^1 (f_+)_- \dy \le \int_0^1 (f_+^0)_- \dy= 0,
    \end{align*}
where the last equality holds thanks to assumption (H$_1$). It follows that $f_+ \ge 0$.
%    
 %   \begin{equation*}
  %  \frac{\textup{d}}{\dt} \int_0^1 \eta_\eps(-f_+) \dy \le C \int_0^1 \eta_\eps(-f_+)\dy+  C \eps.
   % \end{equation*}
%with $C, \widetilde C>0$ depending only on the data.
%A Gronwall argument then gives
 %   \begin{equation*}
  %  \int_0^1 \eta_\eps(-f_+(t, \cdot)) \dy \le e^t \l( C \eps\, t+ \int_0^1 \eta_\eps(-f_+^0)\dy\r).
   % \end{equation*}
%Passing to the limit as $\eps \to 0$ gives
 %   \begin{equation*}
  %  0 \le \int_0^1 (f_+)^- \dy \le e^t \int_0^1 (f_+^0)^- \dy= 0,
   % \end{equation*}
%where the last passage holds thanks to (H$_1$). Then $f_+ \ge 0$ as well.
%
The proof that $f_- \ge 0$ works in a similar way, starting again from equation \eqref{trunc} and taking into account only the terms involving the $\varphi_-$-functions. 

\medskip

We now show that  $\rho \le 1$ and to this aim
%Having shown the nonnegativity of $f_+, f_-$ implies that in order to prove the upper bound it suffices to show that $\rho \le 1$. 
start from  \eqref{trunc} with $\varphi_i:= \varphi$, for $i=+,-$. This gives
    \begin{equation*}
%        \label{trunc}
        \begin{split}
             & \int_0^1 \partial_t  \rho\,\varphi \dy+ \frac{D_T}{L^2(t)}  \int_0^ 1 \partial_y  \rho\,\partial_y \varphi \dy \\
            &= -\frac{L'(t)}{L(t)} \int_0^1 y \,\rho\, \partial_y  \varphi + \rho\, \varphi \dy+ \frac{v_0}{L(t)} \int_0^1    \l(\bigl(f_+\,(1-  \rho)\bigr)_{\text{tr}} - \bigl(f_-\,(1-  \rho)\bigr)_{\text{tr}}\r) \partial_y \varphi \dy \\
            &\quad - \frac{1}{L(t)}\, \Big[ \big(\beta_{+}(\Lambda(t)) \,f_{+}(t, 1)-\alpha_{-}(\Lambda(t))\,g_{-}(\bbf(t, 1))\big) \,\varphi(1) \\
            & \quad \quad  +\big(\beta_{-}(\Lambda_{\textup{som}}(t))\,  f_{-}(t, 0)-  \alpha_{+}(\Lambda_{\text{som}}(t))\, g_{+}(\bbf(t, 0))\big) \, \varphi(0)\Big].
        \end{split}
    \end{equation*}
We  choose $\varphi= \eta_\eps'(\rho-1)$,   $\widetilde \Omega_\eps= \{y \in (0,1): \, 1 \le \rho(t, y)  \le 1+ 2\eps\}$, and reason as before exploiting hypothesis (H$_2$). Then  the limit as $\eps \to 0$ entails
    \[
    0 \le \int_0^1 (\rho-1)^+ \dy \le \int_0^1 (\rho^0-1)^+ \dy= 0,
    \]
thanks again to (H$_1$). It follows that  $\rho \le 1$, then the claim is proved. 

\medskip

\underline{Step 5: Conclusion}.
Using the original notation, we have shown the existence of a solution $\bar \bbf$ to \eqref{weak2} such that $\bar f_+, \bar f_- \ge 0$ and $\bar \rho \le 1$ for a.e. $y \in (0,1)$, $t \in [0, T]$. This implies that $\bbf$ is a weak solution to \eqref{weak2.1}--\eqref{weak2.2} such that $f_+, f_- \ge 0$ and $\rho \le 1$ for a.e. $x \in (0, L(t))$, $t \in [0, T]$. The system is completely solved. 
\end{proof}

\begin{lemma}
\label{lemma:B3}

The operator  $\mathcal{B}_3 \colon C([0, T])^3 \times C([0, T])^2 \to X$, which maps a pair %be the operator 
%defined as $\mathcal{B}_3(\boldsymbol{\Lambda}, \boldsymbol{L})= (\bbf_1, \bbf_2
%, \boldsymbol{L}
%)$, where 
$(\boldsymbol{\Lambda}, \boldsymbol{L})$ to the unique solution $(\bbf_1, \bbf_2)$ of \eqref{weak2.1}--\eqref{weak2.2},
%is the solution operator to the previous problem while $(\boldsymbol{\Lambda}, \boldsymbol{L}) \mapsto \boldsymbol{L}$ acts as the identity.  Then, $\mathcal{B}_3$ 
is Lipschitz continuous. 
\end{lemma}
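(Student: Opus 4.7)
The plan is to mimic the contraction argument carried out in Step~4 of the proof of Theorem~\ref{lemma:syst}, but now with the perturbation coming from the pair $(\boldsymbol{\Lambda}, \boldsymbol{L})$ rather than from the inner fixed-point iteration. Fix two inputs $(\boldsymbol{\Lambda}^{(k)}, \boldsymbol{L}^{(k)})$, $k=1,2$, in $C([0,T])^3\times C([0,T])^2$ with images $(\bbf_1^{(k)}, \bbf_2^{(k)}) := \mathcal{B}_3(\boldsymbol{\Lambda}^{(k)}, \boldsymbol{L}^{(k)})$. Since each solution lives on its own moving interval, I would first pull both pairs back to the reference domain $(0,1)$ via $y = x/L_j^{(k)}(t)$ exactly as in Step~1 of Theorem~\ref{lemma:syst}; it suffices to estimate $\delta \bar f_{i,j} := \bar f_{i,j}^{(1)} - \bar f_{i,j}^{(2)}$ in $(L^\infty((0,T);L^2(0,1)) \cap L^2((0,T);H^1(0,1)))^2$ and then transport the bound back to $X$ using the uniform two-sided bound $\ell_j \le L_j^{(k)}(t) \le L_j^{(0)}+T\|h_j\|_{L^\infty}$.

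Next, subtract the two versions of the transformed equation \eqref{weak4.1} neurite by neurite and test with $\varphi_i = \delta \bar f_{i,j}$. The right-hand side splits into a part linear in $\delta \bar f_{i,j}$ and a forcing part depending on $\delta L_j := L_j^{(1)}-L_j^{(2)}$, $\delta L_j' = (L_j^{(1)})'-(L_j^{(2)})'$ and $\delta \boldsymbol{\Lambda}$. Here I would exploit the box constraints $f_{i,j}^{(k)}\in[0,1]$ proved in Theorem~\ref{lemma:syst}, the Lipschitz continuity of $\alpha_{i,j},\beta_{i,j},g_{i,j}$ from (H$_2$)--(H$_5$), the uniform estimates \eqref{dis-L}--\eqref{rem:L}, the elementary identities
\[
\tfrac{1}{(L_j^{(1)})^2} - \tfrac{1}{(L_j^{(2)})^2} = \tfrac{(L_j^{(2)}-L_j^{(1)})(L_j^{(1)}+L_j^{(2)})}{(L_j^{(1)} L_j^{(2)})^2}, \qquad \tfrac{1}{L_j^{(1)}} - \tfrac{1}{L_j^{(2)}} = \tfrac{L_j^{(2)}-L_j^{(1)}}{L_j^{(1)} L_j^{(2)}},
\]
together with the Lipschitz continuity of $(a,b)\mapsto a(1-a-b)$ and $(a,b)\mapsto b(1-a-b)$ on $[0,1]^2$, the trace estimate \eqref{eq:trace_theorem}, and Young's inequality with suitable parameters to absorb every occurrence of $\|\partial_y \delta \bar f_{i,j}\|_{L^2(0,1)}^2$ into the left-hand diffusive term $\tfrac{D_T}{(L_j^{(1)})^2}\|\partial_y \delta \bar f_{i,j}\|_{L^2(0,1)}^2$. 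This will produce a differential inequality of the form
\[
\tfrac{d}{dt}\sum_{i,j}\|\delta \bar f_{i,j}\|_{L^2(0,1)}^2 + c\sum_{i,j}\|\partial_y \delta \bar f_{i,j}\|_{L^2(0,1)}^2 \le C\sum_{i,j}\|\delta \bar f_{i,j}\|_{L^2(0,1)}^2 + C\|\delta\boldsymbol{\Lambda}\|_{C([0,T])^3}^2 + C\|\delta\boldsymbol{L}\|_{C([0,T])^2}^2,
\]
where the dependence on $\delta L_j'$ is reabsorbed by writing $\delta L_j' = h_j(\Lambda_j^{(1)},L_j^{(1)}) - h_j(\Lambda_j^{(2)},L_j^{(2)})$ and invoking the Lipschitz continuity of $h_j$ from (H$_3$). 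A Gronwall argument in time, followed by time integration to recover the $L^2((0,T);H^1(0,1))$-part from the absorbed gradient terms, yields the desired Lipschitz bound on $\mathcal{B}_3$ after the inverse change of variables.

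The main obstacle I anticipate is the careful handling of the terms in which the two coefficients $1/L_j^{(k)}(t)$, $1/(L_j^{(k)})^2(t)$, and $y(L_j^{(k)})'(t)/L_j^{(k)}(t)$ differ between the two solutions: these generate cross terms of the type $(\delta L_j)\,\partial_y \bar f_{i,j}^{(2)}$ and $(\delta L_j')\,\partial_y \bar f_{i,j}^{(2)}$ which, unlike in Step~4 of Theorem~\ref{lemma:syst}, cannot be killed by a truncation and must instead be controlled using the a priori $H^1$ regularity of $\bar f_{i,j}^{(2)}$ combined with the uniform lower bound $L_j^{(k)}\ge \ell_j$ from \eqref{dis-L}. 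Treating the advective term $y(L_j^{(k)})'/L_j^{(k)}\,\partial_y \bar f_{i,j}$ requires in particular an integration by parts and Young's inequality with a small parameter, so that only a fraction of the diffusion on the left is consumed, leaving a coercive residual on $\partial_y \delta \bar f_{i,j}$. Once these cross terms are tamed, the rest of the argument is parallel to the existence proof and uses only Lipschitz continuity of the boundary nonlinearities and Gronwall's inequality.
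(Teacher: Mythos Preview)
Your proposal is correct and follows essentially the same route as the paper: pull both solutions back to the fixed domain via \eqref{chang-var}, subtract the transformed equations, test with the difference, absorb gradient terms using Young's inequality and the trace estimate, and close by Gronwall, with the Lipschitz constant depending on $\|\bar f_{i,j}^{(2)}\|_{L^2((0,T);H^1(0,1))}$ (which the paper then bounds uniformly in the remark immediately following the lemma). The paper's own proof is in fact much terser than your outline---it merely says the argument is ``similar to Lemma~\ref{lemma:B1}'' with the change of variables \eqref{chang-var} and a Gronwall inequality---so the obstacle you anticipate (the cross terms involving $\delta L_j\,\partial_y \bar f_{i,j}^{(2)}$) is exactly the reason the constant $\overline C$ in \eqref{lip-3} carries the $H^1$-norm of one of the solutions.
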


\begin{proof}

Thanks to Theorem \ref{lemma:syst}, $\mathcal{B}_3$ is well defined. We choose $\boldsymbol{\Lambda}^{(a)} \in C([0,T])^3$ and $\boldsymbol{L}^{(a)} \in C([0, T])^2 $ and set $(\bbf_1^{(a)}, \bbf_2^{(a)}
%, {\boldsymbol{L}}^{(a)}
):= \mathcal{B}_3(\boldsymbol{\Lambda}^{(a)}, \boldsymbol{L}^{(a)})$, $a=1,2$. Then we start from equations \eqref{weak2.1}, \eqref{weak2.2} for the respective copies, take the corresponding differences and use a Gronwall inequality to achieve
    \begin{align}
    \label{lip-3.1}      
        &\|(\bbf_1^{(1)}, \bbf_2^{(1)})- (\bbf_1^{(2)}, \bbf_2^{(2)})\|_{\prod_{j=1,2} (L^2(0,T; H^1(0, 1)))^2} \nonumber\\
        &\qquad \le \overline{C}\,
        e^{CT}\,\Big(\|\boldsymbol{\Lambda}^{(1)}-\boldsymbol{\Lambda}^{(2)}\|_{C([0,T])^3}^2 
        +  \|\boldsymbol{L}^{(1)}- \boldsymbol{L}^{(2)}\|_{C([0, T])^2}^2\Big),
    \end{align}
where the constant $\overline C $  is shown to be uniform. 
% On the other hand, the Lipschitz continuity of the identity finally entails
%     \begin{equation}
%     \label{lip-3.1}
%     \|(\bbf_1^{(1)}, \bbf_2^{(1)}, {\boldsymbol{L}}^{(1)})- (\bbf_1^{(2)}, \bbf_2^{(2)}, {\boldsymbol{L}}^{(2)})\|_X^2 \le \big(C T e^{2CT}+1 \big)  \Big(\|\boldsymbol{\Lambda}^{(1)}-\boldsymbol{\Lambda}^{(2)}\|_{C([0,T])^3}^2 
%         +  \|\boldsymbol{L}^{(1)}- \boldsymbol{L}^{(2)}\|_{C([0, T])^2}^2\Big),
%     \end{equation}
% from which the conclusion.
%
%Choosing $T>0$ in \eqref{lip-3} small enough in such a way that $\tilde{C}(T)\,e^{C\,T}< 1$ implies that $\mathcal{B}_3$ is a contraction. 
\end{proof}

\begin{proof}[{Proof of Theorem \ref{thm:existence}}]

We use the Banach fixed point theorem to show the existence of a unique solution 
%\newline 
$(\bbf_1, \bbf_2, \boldsymbol{\Lambda}, \boldsymbol{L})$ to \eqref{eq:model}--\eqref{ode}.
We consider the set
    \[
    \mathcal{K}= \{(\bbf_1, \bbf_2
    %, \boldsymbol{L}
    ) \in X : \,
    %L_j(t)>0 \text{ and } 
    0 \le f_{i, j}(t,x) \le 1  \text{ for a.e. $x \in (0, 1)$ and $t \in [0, T]$}\}.     
    \]
Let $\mathcal{B} \colon \mathcal{K} \to X $ be  given by $\mathcal{B}(\bbf_1, \bbf_2
%, {\boldsymbol{L}}
) = \mathcal{B}_3(\mathcal{B}_1(\bbf_1, \bbf_2
%, {\boldsymbol{L}}
), \mathcal{B}_2(\mathcal{B}_1(\bbf_1, \bbf_2
%, {\boldsymbol{L}}
)))$, 
where $\mathcal{B}_1, \mathcal{B}_2, \mathcal{B}_3$ are defined in Lemmas~\ref{lemma:B1}, \ref{lemma:B2}, and \ref{lemma:B3}, respectively. Thus $\mathcal{B}$ is well defined, while  Theorem \ref{lemma:syst} implies that $\mathcal{B}$ is  self-mapping, that is, $\mathcal{B}\colon \mathcal{K} \to \mathcal{K}$.

We next show that $\mathcal{B}$ is a contraction. Let $(\widehat \bbf_1^{(1)}, \widehat \bbf_2^{(1)}
%, \widehat{\boldsymbol{L}}^{(1)}
), (\widehat \bbf_1^{(2)}, \widehat \bbf_2^{(2)}
%, \widehat{\boldsymbol{L}}^{(2)}
) \in \mathcal{K}$ and set $( \bbf_1^{(1)}, \bbf_2^{(1)}
%, {\boldsymbol{L}}^{(1)}
):= \mathcal{B}(\widehat \bbf_1^{(1)}, \widehat \bbf_2^{(1)}
%, \widehat{\boldsymbol{L}}^{(1)}
)$ as well as $(\bbf_1^{(2)},  \bbf_2^{(2)}
%, {\boldsymbol{L}}^{(2)}
):= \mathcal{B}(\widehat \bbf_1^{(2)}, \widehat \bbf_2^{(2)}
%, \widehat{\boldsymbol{L}}^{(2)}
)$. Using \eqref{lip-3.1}, \eqref{lip-2}, and \eqref{lip-1}, and summarizing the not essential constants gives 
    \begin{equation*}
%    \label{lip-5}
\begin{split}
       & \|(\bbf_1^{(1)}, \bbf_2^{(1)}
       %, \boldsymbol{L}^{(1)}
       )- (\bbf_1^{(2)}, \bbf_2^{(2)}
       %, {\boldsymbol{L}}^{(2)}
       )\|_X  \\
        %\le CT e^{2C\,T}  \|\boldsymbol{L}^{(1)}- \boldsymbol{L}^{(2)}\|_{C([0, T])^2}^2 \\
        %& \le  C T^3\,e^{2CT} \l(\max_{j=1,2} \bigl\{e^{2\,L_{h_j}\,T}\, L_{h_j}\bigr\}\r)^2 \|\boldsymbol{\Lambda}^{(1)}- \boldsymbol{\Lambda}^{(2)}\|_{C([0, T])^3}^2 \\
        &  \le C T  e^{CT} \big(T \max_{j=1,2} \big\{L_{h_j} e^{2T L_{h_j}}\big\}+1 \big)    \|(\widehat \bbf_1^{(1)}, \widehat \bbf_2^{(1)}
        %, \widehat{\boldsymbol{L}}^{(1)}
        )- (\widehat \bbf_1^{(2)}, \widehat \bbf_2^{(2)}
        %, \widehat{\boldsymbol{L}}^{(2)}
        )\|_X,
        \end{split}
%        &=: C(T)\,\|(\widehat \bbf_1^{(1)}, \widehat \bbf_2^{(1)})- (\widehat \bbf_1^{(2)}, \widehat \bbf_2^{(2)})\|_X^2.
    \end{equation*}
%In particular we observe that $C(T)>0$ depends on the data and on $T$ via $\|\partial_y f_{i, j}^{(2)}\|_{L^2((0,T); L^2(0,1))} $, $i= +, -$, $j= 1, 2$,  cf. \eqref{lip-3}.
from which the contractivity follows if $T>0$ is small enough that $C T e^{CT}< 1$.
%Choosing $T>0$ small enough in such a way that 
%    \[
 %   \widetilde C T^3\, e^{2CT} \l(\max_{j=1,2} \bigl\{e^{2\,L_{h_j}\,T} L_{h_j}\bigr\}\r)^2 < 1
  %  \]
%implies that $\mathcal{B}$ is a contraction. Therefore, 
Thanks to the Banach's fixed point theorem  we then infer the existence of a unique $( \bbf_1, \bbf_2
%, \boldsymbol{L}
) \in \mathcal{K}$ such that $(\bbf_1, \bbf_2
%, \boldsymbol{L}
)= \mathcal{B}( \bbf_1, \bbf_2
%, \boldsymbol{L}
)$.
From \eqref{lip-1}, this implies the uniqueness of $\boldsymbol{\Lambda}$, too. %Therefore, the solution
 %$(\bbf_1, \bbf_2, \boldsymbol{\Lambda}, \boldsymbol{L})$ to \eqref{eq:model}--\eqref{ode} is uniquely determined. 
Due to the uniform $L^\infty$-bounds on $\bbf_j$, a concatenation argument yields existence on the complete interval $[0,T]$. The proof is thus complete. 
\end{proof}

\section{Stationary solutions}
\label{sect4}
This section is dedicated to a brief investigation of stationary solutions $(\bbf_1^{\infty}, \bbf_2^{\infty}, \boldsymbol{\Lambda}^{\infty}, \boldsymbol{L}^{\infty})$ to \eqref{eq:model}--\eqref{ode}. That is, they satisfy 
%the following equations
%
%\subsection{Constant stationary solutions}
%
%In this subsection we look for stationary solutions to \eqref{eq:model}--\eqref{ode}, that is, a $9$-uple $(\bbf_1^{\infty}, \bbf_2^{\infty}, \boldsymbol{\Lambda}^{\infty}, \boldsymbol{L}^{\infty})$ such that $\bbf_j$ solves
    \begin{equation} \label{eq:model-stat}
    \begin{aligned}
    0&= -\frac{1}{(L_j^\infty)^2}\,\partial_x  \left[\phantom{-}L_j^\infty\,v_0\,f_{+,j}^{\infty}\,(1- \rho_j^{\infty})- D_T\,\partial_{x} f_{+,j}^{\infty} \right] + \lambda\,(f_{-,j}^{\infty} - f_{+,j}^{\infty}), \\
    0&= -\frac{1}{(L_j^\infty)^2}\,\partial_x \left[-L_j^\infty\, v_0\,f_{-,j}^{\infty}\,(1- \rho_j^{\infty})- D_T\,\partial_{x} f_{-,j}^{\infty} \right] + \lambda\, (f_{+,j}^{\infty} - f_{-,j}^{\infty}),
    \end{aligned}
\quad \text{in }  (0, 1),
    \end{equation}
with boundary conditions
    \begin{equation}
    \label{boundary-stat}
    \begin{aligned}
     J_{+,j}^{\infty}(0)  &= L_j^\infty\alpha_{+,j}(\Lambda_{\text{som}}^\infty)\, g_{+,j}(\bbf_{j}^{\infty}(0)), &\quad
    -J_{-,j}^{\infty}(0)&= L_j^\infty\beta_{-,j}(\Lambda_{\text{som}}^{\infty})\,f_{-,j}^{\infty}(0),\\
    J_{+,j}^{\infty}(1)  &= L_j^\infty\beta_{+,j}(\Lambda_j^\infty)\,f_{+,j}^{\infty}(1),  
    &\quad -J_{-,j}^{\infty}(1) &= L_j^\infty\alpha_{-,j}(\Lambda_j^\infty)\,g_{-,j}(\bbf_{j}^{\infty}(1)),
    \end{aligned}
    \end{equation}
while $\boldsymbol{\Lambda}^\infty$ and $\boldsymbol{L}^\infty$ solve
    \begin{equation}
    \label{ode-stat}
    \begin{aligned}
    0 &=  \sum_{j=1,2}  \bigl( \beta_{-,j}(\Lambda_{\text{som}}^{\infty})\,f_{-,j}^{\infty}(0) - \alpha_{+,j}(\Lambda_{\text{som}}^\infty)\, g_{+,j}(\bbf_{j}^{\infty}(0)) \bigr), \\ %\quad &&\text{at } x= 0\\
    0 &=  \beta_{+,j}(\Lambda_j^\infty)\,f_{+,j}^{\infty}(L_j^\infty)- \alpha_{-,j}(\Lambda_j^\infty)\, g_{-,j}(\bbf_{j}^{\infty}( L_j^\infty)) - \chi\,h_j(\Lambda_j^\infty, L_j^\infty), \\
            0 &= h_j(\Lambda_j^\infty, L_j^\infty), 
    %&& \text{at } x= L_1(t)\\    
    %\bb{\beta \fL - \alpha g(\fR, \fL, \Lambda_1)}, \\
    %\Lambda_2'(t) &= \bb{\beta \fL - \alpha g(\fR, \fL, \Lambda_2)}, \\
    %0 &= \beta_{+,2}(\Lambda_2(t)) f_{+,2}(t, L_2(t))- \alpha_{-,2}(\Lambda_2(t)) g_{-,2}(f_{+,2}(t, L_2(t)), f_{-,2}(t, L_2(t))), %&& \text{at } x= L_2(t).
    \end{aligned}
    \end{equation}
respectively. Notice that we assumed $\gamma_{\text{prod}}(t) \to 0$ as $t\to\infty$, cf. hypothesis (H$_7$), in order to guarantee a finite total mass of the stationary state {(which, in turn, results in finite values of $L_j^\infty$)}. From the modelling point of view, this means that at the end of the growth phase, when the neuron is fully developed, there is no more production of vesicles in the soma. In addition to equations \eqref{eq:model-stat}--\eqref{ode-stat}, stationary solutions are parametrized by their total mass
$$
m_\infty := \sum_{j=1,2} \left(L_j^\infty\int_0^{1} \rho_j^\infty(y)\dy + \Lambda_j^\infty {+ \chi\,L_j^\infty}\right) + \Lambda_{\text{som}}^\infty.
$$
For fixed $0< m_\infty < +\infty$, we expect three possible types of stationary solutions {(where the upper bound on $m_\infty$ excludes neurites of infinite length)}:
%\begin{itemize}
\,

\noindent $\bullet$ No mass inside the neurites, i.e., $\rho_j^\infty = 0$, and $m_\infty = \Lambda_1^\infty+ \Lambda_2^\infty + \chi\,L_1^\infty + \chi\,L_2^\infty + \Lambda_{\text{som}}^\infty$. This solution is always possible as it automatically satisfies \eqref{boundary-stat}. The length depends on the fraction of mass stored in each $\Lambda_j$, which yields a family of infinite solutions.

\noindent $\bullet$ Constant  solutions with mass inside the neurites, i.e., $\bbf_j^\infty \neq (0,0)$. In this case, for $\lambda > 0$, the reaction term enforces $f_{-,j}^\infty = f_{+,j}^\infty=: f_j^\infty$. However, such solutions only exist if the nonlinearities at the boundary satisfy   conditions so that \eqref{boundary-stat} holds. In this case, compatibility with a given total mass $m_\infty$ can be obtained by adjusting the concentration  $\Lambda_{\text{som}}^\infty$, which decouples from the remaining equations. 

\noindent $\bullet$ Non-constant  solutions,  featuring boundary layers at the end of the neurites.

%\end{itemize}
A natural question is the existence of 
non-constant
stationary solutions as well as their uniqueness and stability properties which we postpone to future work. Instead, we focus on conditions for the existence of non-trivial constant solutions. {Let us note, however, that even if the biological system reaches a stationary state in terms of the length of the neurites, we would still expect a flux of vesicles through the system (i.e., the system will still be out of thermodynamic equilibrium). Thus, the fluxes at the boundary would be non-zero and one would expect non-constant stationary solutions even in this case.}

\subsection{Constant stationary solutions}
%For specific choices of the non-linearities we discuss conditions that ensure the existence of constant stationary solutions.
% We point out that it is not  easy to obtain stationary solutions to \eqref{eq:model}--\eqref{ode}, due to the nature of the problem. Nevertheless, we will see that
% under suitable assumptions it is possible to show the existence of at least one constant equilibrium to \eqref{eq:model-stat}--\eqref{free-boundary-stat}. To this end
We assume a strictly positive reaction rate $\lambda > 0$ which requires $f_{+,j} = f_{-,j} =: f_j^\infty \in [0,1]$. {Assuming in addition $f_j^\infty = \textrm{const}.$, \eqref{eq:model-stat} is automatically satisfied (note that $(L_j^\infty)'=0$). Thus, the actual constants are determined via the total mass, the stationary solutions to the ODEs and the boundary couplings, only, where 
% We also make the simplified assumption that
%     \[
%     g_{+,j}= f_{+,j}(1- \rho_j) \quad \text{as well as} \quad g_{-,j}= f_{-,j}(1- \rho_j).
%     \]
% Thus \eqref{eq:model-stat} reads as
%     \begin{equation*}
% %        \label{eq:model-stat1}
%     0= \partial_x  \left[v_0\,f_{j}^{\infty}\,(1- \rho_j^{\infty}) \right] 
% \quad \text{in }  (0, 1),
%     \end{equation*}
the fluxes take the form}
    \begin{equation}
        \label{J-infty}
    J_{+,j}^{\infty}= -J_{-,j}^{\infty}= v_0\,f_{j}^{\infty}\,(1- \rho_j^{\infty})= v_0\,f_{j}^{\infty}\,(1- 2f_j^{\infty}).
    \end{equation}
    Making the choice
\begin{align}\label{eq:choice_g}
    g_{+,j}(f_{+,j}, f_{-,j})= f_{+,j}\,(1- \rho_j) \quad \text{as well as} \quad g_{-,j}(f_{+,j}, f_{-,j})= f_{-,j}\,(1- \rho_j),
    \end{align}
 the boundary conditions \eqref{boundary-stat} become
    \begin{equation}
        \label{boundary-stat-inf}
    \begin{split}
    J_{+,j}^{\infty}&= {L_j^\infty}\alpha_{+,j}(\Lambda_{\text{som}}^\infty)\, f_j^{\infty}\,(1- \rho_j^{\infty})= {L_j^\infty}\beta_{+,j}(\Lambda_j^{\infty})\, f_j^{\infty}, \\ -J_{-,j}^{\infty}&=   {L_j^\infty}\alpha_{-,j}(\Lambda_{j}^\infty)\,f_j^{\infty}\,(1- \rho_j^{\infty})= {L_j^\infty}\beta_{-,j}(\Lambda_{\text{som}}^{\infty})\, f_j^{\infty},
    \end{split}
    \end{equation}
which together with \eqref{J-infty} yield
    \begin{equation*}
%        \label{obs}
    \alpha_{+,j}(\Lambda_{\text{som}}^{\infty})= \alpha_{-,j}(\Lambda_j^{\infty})= \frac{v_0}{L_j^\infty} \quad \text{as well as} \quad \beta_{-,j}(\Lambda_{\text{som}}^{\infty})= \beta_{+,j}(\Lambda_j^{\infty})= \frac{v_0(1- \rho_j^{\infty})}{L_j^\infty}.
     \end{equation*}
%    
 %   
%We are interested in the existence of constant stationary solutions to  \eqref{eq:model}--\eqref{ode}, that is, solutions to the system
%    
%
%
%In order to find a constant  solution to \eqref{eq:model-stat}--\eqref{boundary-stat} we make some observations. First of all, due to the presence of the reaction terms in \eqref{eq:model-stat},  we need $f_{+,j} = f_{-,j} =: f_j^\infty \in [0,1]$. 
%
%This has the immediate consequence that $J_{+,j} = - J_{-,j} = v_0 f_j^\infty(1-2f_j^\infty)$.
%
%
Thus, fixing the values of $f_j^\infty$, we obtain $\Lambda_j^\infty$, $\Lambda_{\text{som}}^\infty$ by inverting $\alpha_{+,j}$, {$\alpha_{-,j}$}, $\beta_{+,j}$, and $\beta_{-,j}$, respectively, together with the compatibility conditions 
$$
\alpha_{-,j}(\Lambda_j^\infty)(1-\rho_j^\infty) = \beta_{+,j}(\Lambda_j^\infty) \quad \text{as well as} \quad \alpha_{+,j}(\Lambda_{\text{som}}^\infty)(1- \rho_j^\infty)= \beta_{-,j}(\Lambda_{\text{som}}^\infty).
$$
We further observe that  the equations in \eqref{ode-stat} are the differences of in- and outflow at the respective boundaries, which in the case of constant $f_j^\infty$-s read as
    \begin{equation}
        \label{ode-const}
        \begin{split}
            0&= \sum_{j=1,2} \l(\beta_{-,j}(\Lambda_{\text{som}}^\infty)\,f_j^\infty -  \alpha_{+,j}(\Lambda_{\text{som}}^\infty)\,f_j^\infty\,(1- \rho_j^\infty)\r), \\
            0&= \beta_{+,j}(\Lambda_j^\infty)\,f_j^\infty - \alpha_{-,j}(\Lambda_j^\infty)\,f_j^\infty\,(1- \rho_j^\infty) - \chi\,h_j(\Lambda_j^\infty, L_j^\infty).
        \end{split}
    \end{equation}
It turns out that \eqref{ode-const} will be automatically satisfied as soon as \eqref{boundary-stat-inf} and  \eqref{ode-stat} hold.
%and, in addition,
%$$
%h_j(\Lambda_j^\infty, L_j^\infty) = 0.
%$$
In particular, the last equation in \eqref{ode-stat} will determine the values of $L_j^\infty$. We make the choices
\begin{align}
\label{eq:choice_beta}
            \beta_{+,j}(s)&= c_{\beta_{+,j}} \l(1- \frac{s}{\Lambda_{j, \text{max}}}\r), \quad \beta_{-,j}(s)= c_{\beta_{-,j}} \l(1- \frac{s}{\Lambda_{\text{som}, \text{max}}}\r), \\
            \alpha_{+,j}(s)&= c_{\alpha_{+,j}}  \frac{s}{\Lambda_{\text{som}, \text{max}}}, \quad \alpha_{-,j}(s)= c_{\alpha_{-,j}}  \frac{s}{\Lambda_{j, \text{max}}}, \label{eq:choice_alpha}
            \end{align}
        for some $c_{\beta_{i,j}}, c_{\alpha_{i,j}} \ge 0$, with $\Lambda_{\text{som}, \text{max}}$, $\Lambda_{j,\text{max}}$ being the maximal capacity of soma and growth cones.
        Then, for given $f_j^\infty \in (0,\frac{1}{2}]$ to be a stationary solution, we need
        \begin{align*}
        c_{\alpha_{+,j}}&= {\frac{v_0}{L_j^\infty}} \frac{\Lambda_{\text{som,max}}}{\Lambda_{\text{som}}^\infty}, \quad c_{\beta_{-,j}}= {\frac{v_0}{L_j^\infty}}(1- \rho_j^\infty) \frac{\Lambda_{\text{som,max}}}{\Lambda_{\text{som,max}}- \Lambda_{\text{som}}^\infty}, \\ c_{\alpha_{-,j}}&= {\frac{v_0}{L_j^\infty}} \frac{\Lambda_{j, \text{max}}}{\Lambda_j^\infty}, \quad c_{\beta_{+,j}}= {\frac{v_0}{L_j^\infty}}(1- \rho_j^\infty) \frac{\Lambda_{j, \text{max}}}{\Lambda_{j, \text{max}}- \Lambda_j^\infty}.
        \end{align*}
The interesting question of stability of these states will be treated in future work.

\section{Finite volume scheme and scaling}
\label{sect5}

%Unless explicitely stated, throughout this section, we will make a concrete choice for the non-linearities in our model, i.e. 

\subsection{Finite volume scheme}\label{sec:FV}
We now present a computational scheme for the numerical solution of model \eqref{eq:model}--\eqref{ode}. The scheme relies on a spatial finite volume discretization of the conservation law \eqref{eq:model_cont} and adapted implicit-explicit time stepping schemes.
Starting point for the construction of a discretization is the transformed equation \eqref{eq:discr1}. First, we introduce an equidistant grid 
\begin{equation*}
    0 = y_{-1/2} < y_{1/2} < \ldots < y_{n_e-1/2} < y_{n_e+1/2} = 1
\end{equation*}
of the interval $(0,1)$ and define control volumes $I_k := (y_{k-1/2},y_{k+1/2})$, $k=0,\ldots,n_e$. The mesh parameter is $h=y_{k+1/2}-y_{k-1/2} = (n_e+1)^{-1}$. The cell averages of the approximate (transformed) solution are denoted by
\begin{equation*}
    \bar f_{i,j}^k(t) := \frac1h \int_{I_k}  f_{i,j}(t,y)\dy,\qquad k=0,\ldots,n_e,
\end{equation*}
$i\in\{+,-\}$, $j\in \{1,2\}$.
To shorten the notation we omit the vesicle index $j\in\{1,2\}$ in the following.
Integrating  \eqref{eq:discr1} over an arbitrary control volume $I_k$, $k=0, \ldots, n_e$, yields
\begin{align*}
  0 = \int_{I_k} \Big[
    \partial_t f_+ + \frac{1}{L(t)^2} \partial_y\left[- D_T\,\partial_y f_+ + L(t)\,((v_0\,(1-\rho) - L'(t)\,L(t)\,y)\,f_+)\right]&\nonumber\\
    + \frac{L'(t)}{L(t)}\, f_+ - \lambda\,(f_--f_+)
    \Big]\dy &\nonumber\\
    = h\,\partial_t \bar f_+^k + \left[- \frac{D_T}{L(t)^2}\,\partial_y f_+ + \frac1{L(t)}\, \left((v_0\,(1-\rho) - L'(t)\,y)\,f_+\right)\right]_{y_{k-1/2}}^{y_{k+1/2}} &\nonumber\\
    + h\,\frac{L'(t)}{L(t)}\, \bar f_+^k - h\,\lambda\,(\bar f_-^k-\bar f_+^k)&
    . %\label{eq:PDE_control_volume}
\end{align*}
%With similar arguments we get the following equation for $\bar f_{-,j}$:
%\begin{align}
%      \int_I \Big[
%    \partial_t \bar f_- + \partial_y\left(-\frac{D_T}{L(t)^2}\,\partial_y \bar f_- + \frac1{L(t)}\, \left((-v_0\,(1-\bar\rho) - L'(t)\,y)\,\bar f_-\right)\right)
%    + \frac{L'(t)}{L(t)}\,\bar f_- - \lambda\,(\bar f_+-\bar f_-) \Big]\dy &= 0.\label{eq:PDE_control_volume_minus}
%\end{align}
We denote the convective flux by $\mathbf{v}_+(t,y) := v_0\,(1-\rho)-L'(t)\,y$ and use a Lax-Friedrichs approximation at the endpoints of the control volume
\begin{equation*}
     F_+^{k+1/2}(t) := \avg{(\mathbf{v}_+\,\bar f_+)(t,y_{k+1/2})} - \frac{1}{2}\,\jump{\bar f_+(t,y_{k+1/2})} \approx (\mathbf{v}_+\,f_+)(t,y_{k+1/2}),
\end{equation*}
with $\avg{\cdot}$ and $\jump{\cdot}$ denoting the usual average and jump operators.
For the diffusive fluxes we use an approximation by central differences
\begin{equation*}
    \partial_y f_+(t,y_{k+1/2}) \approx \frac{\bar f_+^{k+1}(t) - \bar f_+^{k}(t)}{h}.
\end{equation*}
For the inner intervals $I_k$ with $k \in \{1,\ldots,n_e-1\}$ this gives the equations
\begin{subequations}\label{eq:fvm_f_plus}
\begin{align}
   \partial_t \bar f_+^k + \frac{D_T}{(h\,L)^2}\left(-\bar f_+^{k-1} + 2\bar f_+^k - \bar f_+^{k+1}\right) 
    &= \frac1{h\,L} \left(F_+^{k-1/2}-F_+^{k+1/2}\right) \nonumber\\
    & \quad + \lambda\,(\bar f_-^k - \bar f_+^k) - \frac{L'}{L}\,\bar f_+^k,
\end{align}
while for $k=0$ and $k=n_e$ we insert the boundary conditions \eqref{boundary_ref_domain} to obtain
\begin{align}
  \partial_t\bar f_+^0 + \frac{D_T}{(h\,L)^2}\,(\bar f_+^0 - \bar f_+^1) 
  &= \frac1{h\,L}\left(\alpha_+(\Lambda_{\text{som}})\,g_+(\bar f_+^0, \bar f_-^0) - F_+^{1/2} \right) \nonumber\\
 & \quad + \lambda\,(\bar f_-^0 - \bar f_+^0) - \frac{L'}{L}\,\bar f_+^0, \\
  \partial_t\bar f_+^{n_e} + \frac{D_T}{(h\,L)^2}\,(\bar f_+^{n_e} - \bar f_+^{n_e-1})  
    &= \frac1{h\,L} \left(F_+^{n_e-1/2} - \beta_+(\Lambda)\,\bar f_+^{n_e}
    \right) \nonumber\\
    & \quad + \lambda\,(\bar f_-^{n_e} - \bar f_+^{n_e}) - \frac{L'}{L}\,\bar f_+^{n_e},
\end{align}
\end{subequations}
almost everywhere in $(0,T)$.
In the same way we deduce a semi-discrete system for $\bar f_-^k$, $k=0,\ldots,n_e$, taking into account the corresponding boundary conditions from \eqref{boundary_ref_domain}.

To treat the time-dependency we use an implicit-explicit time-stepping scheme. We introduce a time grid $t_n = \tau\,n$, for $n=0,\ldots,n_t$, and for some time-dependent function $g\colon [0,T]\to X$ we use the notation
%\begin{equation*}
    $g(t_n) =: g^{(n)}$.
%\end{equation*}
To deduce a fully-discrete scheme we replace the time derivatives in \eqref{eq:fvm_f_plus} by a difference quotient $\partial_t\bar f_{+}(t_{n+1}) \approx \tau^{-1}(\bar f_+^{(n+1)}-\bar f_+^{(n)})$ and evaluate the remaining terms related to diffusion in the successive time-point $t_{n+1}$ and all convection and reaction related terms in the current time point $t_n$. This yields a system of linear equations of the form
\begin{equation}\label{eq:full_discrete_f}
    \left(M+\tau\,\frac{D_T}{(L^{(n)})^2}\,A\right)\,\vec f_{\pm}^{(n+1)} = M\,\vec f_{\pm}^{(n)} + \tau\,\vec b_{\pm}^{(n)} + \tau\,\vec c_{\pm}^{(n)},
\end{equation}
with vector of unknowns $\vec f_\pm^{(n)} = (f_\pm^{0,(n)}, \ldots, f_\pm^{n_e,(n)})^\top$, mass matrix $M$, diffusion matrix $A$, a vector $\vec b_\pm^{(n)}$ summarizing the convection related terms and another vector for the reaction related terms $\vec c_\pm^{(n)}$. In the same way we deduce equations for the discretized 
ordinary differential equations \eqref{ode} and \eqref{free-boundary} which correspond to a standard backward Euler discretization:
\begin{align}
    \Lambda_{\text{som}}^{(n+1)} &=  \Lambda_{\text{som}}^{(n)} + \tau\,\sum_{j=1,2}  \left( \beta_{-,j}\,\bar f_{j,-}^{0, (n+1)} - \alpha_{+,j}(\Lambda_{\text{som}}^{(n+1)})\, g_{+}(\bar f_{+,j}^{0,(n+1)}, \bar f_{-,j}^{0,(n+1)}) \right) + \tau\,\gamma_{\text{prod}}^{(n)}, \label{eq:fvm_Lambda_som}\\
    \Lambda_j^{(n+1)} &= \Lambda_j^{(n)} + \tau\,\beta_{+,j}(\Lambda_j^{(n+1)})\,\bar f_{+,j}^{n_e,(n+1)} \label{eq:fvm_Lambda_j}\\
    &\quad -\tau\,\alpha_{-,j}(\Lambda_j^{(n+1)})\,g_{-,j}(\bar f_{+,j}^{n_e,(n+1)}, \bar f_{-,j}^{n_e,(n+1)}) - 
    \tau\,\chi\, h_j(\Lambda_j^{(n)}, L_j^{(n)}), \nonumber\\
L_j^{(n+1)} &= L_j^{(n)} + \tau\,h_j(\Lambda_j^{(n+1)}, L_j^{(n+1)}),\qquad j=1,2.\label{eq:fvm_L}
\end{align}
Equations \eqref{eq:full_discrete_f}--\eqref{eq:fvm_L} even decouple. One after the other, we can compute
\begin{equation*}
    \boldsymbol f^0, L_j^0, \Lambda_k^0\;\rightarrow\;
    \vec f_{\pm,j}^{(1)}\;\rightarrow\; \Lambda_{k}^{(1)}\;\rightarrow\; L_j^{(1)}
    \; \rightarrow\; \vec f_{\pm,j}^{(2)}\;\rightarrow\; \Lambda_k^{(2)}\;\rightarrow\; L_j^{(2)} \;\rightarrow \;\ldots
\end{equation*}
for $k\in \{1,2,\text{som}\}$ and $j\in\{1,2\}$.

\subsection{Non-dimensionalisation of the model}

To transform the model to a dimensionless form we introduce  a typical time scale $\tilde t$, a typical length $\tilde L$, etc., and dimensionless quantities $\bar t,\, \bar L$ such that $t = \tilde t \bar t$, $L = \tilde L \bar L$. {This is performed on the original system from Section \ref{sect2}, not on the one transformed to the unit interval, as we want to work with appropriate physical units for all quantities, including the length of the neurites.} Realistic typical values are taken from \cite{Humpert2021_transport} (see also \cite{Pfenninger2009, Urbina2018, Tsaneva-Atanasova2009}) which yield the following choices: the \textit{typical length} is $\tilde{L} = 25 ~\mu \text{m}$, the \textit{typical time} is $\tilde{t} = 7200 ~\text{s}$, the \textit{diffusion constant} is $D_T = 0.5~ \frac{\mu\text{m}^2}{\text{s}}$, the \textit{velocity} is $\tilde{v}_0 = 50 \frac{\mu\text{m}}{\text{min}} =  \frac56~\frac{\mu\text{m}}{\text{s}}$. For the \textit{reaction rate} we assume $\tilde \lambda = \frac{1}{s}$.
%
%\textcolor{red}{The \textit{typical influx} is $\tilde{\alpha} = 1~ \frac{\text{vesicles}}{\text{s}}$ and the \textit{typical outflow velocity} is $\tilde{\beta} = 10^{-1}~ \frac{\mu\text{m}}{\text{s}}$. }
% 
The \textit{typical influx and outflow velocity} is $\tilde{\alpha} = \tilde{\beta} = 0.4~ \frac{\mu\text{m}}{\text{s}}$. 
Finally we choose a typical production of $\tilde \gamma = 10\, \text{vesicles}/\text{sec}$.

The remaining quantities to be determined are the maximal density of vesicles inside the neurites, the factor which translates a given number of vesicles with length change of the neurite and the maximal capacity of soma and growth cones.
\subsubsection*{Maximal density}
We assume the neurite to be tube-shaped, pick a circular cross-section at an arbitrary point and calculate the maximal number of circles having the diameter of the vesicles that fit the circle whose diameter is that of the neurite. In this situation, hexagonal packing of the smaller circles is optimal, which allows to cover about $90\,\%$ of the area (neglecting boundary effects). As the \textit{typical diameter} of one vesicle is $130\,\rm{nm}$ and the neurite diameter is 1 $\mu$m, we obtain the condition
\begin{align*}
	\underbrace{0.9\cdot n_{\text{max}}\pi \left(\frac{130\,\rm{nm}}{2}\right)^2}_{\text{area covered by $n_{\text{max}}$ circles of vesicle diameter}}\le \underbrace{\pi \left(\frac{1000\,\rm{nm}}{2}\right)^2}_{\text{area of neurite cross-section}},
\end{align*}
which implies $n_{\text{max}} \le 65$. Now for a tube segment of length $1\,\mathrm{\mu m}$, one can stack  $7$ fully packed cross-section slices, each of which has the diameter of the vesicles, i.e., $130\,\rm{nm}$. This results in a maximal density of $455 \frac{\text{vesicles}}{\mu m}$. As the neurite also contains microtubules and as an optimal packing is biologically unrealistic, we take one third of this value as maximal density, which yields
$
\rho_{\text{max}} = 155 \frac{\text{vesicles}}{\mathrm{\mu m}}.
$
The \textit{typical density} of anterograde and retrograde particles is fixed to $\tilde f := \tilde{f}_+= \tilde{f}_- = 39~ \frac{\text{vesicles}}{\mu \text{m}}$, so that their sum corresponds to a half filled neurite. Thus, for the scaled variables $\bar f_+, \,\bar f_-$, their sum being $\bar \rho= \bar f_+ + \bar f_- = 2$ corresponds to a completely filled neuron. This implies that the term $1- \rho$ has to be replaced by $1-\frac{\bar \rho}{2}$. 
\subsubsection*{Vesicles and growth}
We again consider the neurite as a cylinder with a diameter of $1\,\mu\rm{m}$. Thus the surface area of a segment of length $1\,\mu\rm{m}$ is 
$
A_{\text{surf}} = 2\pi \frac{1\,\mu\rm{m}}{2}1\,\mu\rm{m} \approx 3.14\, (\mu\rm{m})^2.
$
We consider vesicles of $130$ nm diameter, which thus possess a surface area of $0.053$ $(\mu\rm{m})^2$. Thus the number of vesicles needed for an extension of $1\,\mu\rm{m}$ is 
$$
\frac{3.14\,  \mu\rm{m}^2}{0.053\,\mu\rm{m}^2} \approx 59,\quad\text{ i.e., we fix } c_1 = c_2 = c_h := \frac{58.4 \text{ vesicles}}{\mu\rm{m}}.
$$
% As the growth rate of neurons is about $6.3$ $\mu\rm{m}$ per hour (cf. \cite{Humpert2021_transport}), we chose a typical value of $\tilde h = 0.00175\,\mu\rm{m}/s$.
% }
\subsubsection*{Maximal capacities and minimal values}
Finally, we fix the maximal amount of vesicles in the pools and soma to $\Lambda_{\text{som,max}} = 6000\,\text{vesicles}$ and $\Lambda_{j,\text{max}}= 400\,\text{vesicles}$, and choose the typical values $\tilde \Lambda_{\text{som}}$ and $\tilde \Lambda_{\text{cone}}$ as half of the maximum, respectively. It remains to fix the minimal length of each neurite as well as the number of vesicles in the growth cone which defines the switching point between growth and shrinkage. We choose a minimal length of $5\,\mu\rm{m}$ while the sign of $h_j$ changes when the number of vesicles in the growth cone reaches a value of $100\,\text{vesicles}$. This yields the dimensionless quantities 
$
\bar \Lambda_{\text{growth}} = 1, \; \bar L_{\text{min}} = 0.1.
$
Applying the scaling,  model \eqref{eq:model}--\eqref{ode} transforms to
    \begin{align*} %\label{eq:model_cont_scaled}
    \begin{aligned}
    \partial_t \bar f_{+,j}& +\partial_x  \left(\kappa_v\,\bar f_{+,j}\left(1-  \frac{\bar \rho_j}{2}\right)- \kappa_D\,\partial_x \bar f_{+,j}\right) = \kappa_\lambda\bar\lambda (\bar f_{-,j} - \bar f_{+,j}), \\
    \partial_t \bar f_{-,j}& - \partial_x\left(\kappa_v\,\bar f_{-,j}\left(1-  \frac{\bar \rho_j}{2}\right)- \kappa_D\,\partial_x \bar f_{-,j}\right) =  \kappa_\lambda \bar \lambda(\bar f_{+,j} - \bar f_{-,j}),
    \end{aligned}
    \end{align*}
$\text{in } (0, T) \times (0, \bar L_j(t)) $, with dimensionless parameters
    $
    \kappa_v = \frac{v_0 \tilde t }{\tilde L}, \; \kappa_D = D_T\frac{\tilde t }{\tilde L^2}, \; \kappa_\lambda = \tilde t \tilde \lambda, 
    $
and with boundary conditions (keeping the choices \eqref{eq:choice_g}, \eqref{eq:choice_beta}, \eqref{eq:choice_alpha})
    \begin{equation*}
%    \label{eq:boundary_scaled}
    \begin{aligned}
     \bar J_{+,j}(t,0) &= \kappa_{\alpha_{+,j}}\frac{\bar \Lambda_{\text{som}}(t)}{2}\,\bar f_{+,j}(\bar t,0)\left(1-\frac{\bar \rho_j(t,0)}{2}\right), \\
    -\bar J_{-,j}(t,0)  &= \kappa_{\beta_{-,j}}\left(1-\frac{\bar \Lambda_{\text{som}}(t)}{2}\right)\,\bar f_{-,j}(t,0),\\
    \bar J_{+,j}(t, L_j(t)) - \bar L'_j(t) \bar f_{+,j}(t, \bar L_j(t)) &= \kappa_{\beta_{+,j}}\left(1-\frac{\bar \Lambda_{j}(t)}{2}\right)\,\bar f_{+,j}(\bar t, \bar L_j(\bar t)), \\ 
    -\bar J_{-,j}(t, L_j(t))  +  \bar L'_j(t) \bar f_{-,j}(t, \bar L_j(t))&= \kappa_{\alpha_{-,j}}\frac{\bar\Lambda_j(t)}{2}\,\bar f_{-,j}(\bar t,0)\left(1-\frac{\bar \rho_j(t,0)}{2}\right),
    \end{aligned}
    \end{equation*}
    with $\kappa_{\alpha_{+,j}} = \frac{\tilde t}{\tilde L} c_{\alpha_{+,j}},  \kappa_{\alpha_{-,j}} = \frac{\tilde t}{\tilde L} c_{\alpha_{-,j}},  \kappa_{\beta_{+,j}} = \frac{\tilde t}{\tilde L} c_{\beta_{+,j}},  \kappa_{\beta_{-,j}} = \frac{\tilde t}{\tilde L} c_{\beta_{-,j}}$.
It remains to fix the values of the constants appearing in the functions $\alpha_{\pm,j},\,\beta_{\pm,j}$. As they correspond to velocities, we fix them to the typical in-/outflux velocity
%half the bulk velocity $v_0$ in all cases, i.e.,
$$
    \tilde c := c_{\alpha_{+,j}} = c_{\alpha_{-,j}} = c_{\beta_{+,j}}= c_{\beta_{-,j}} = 0.4 ~\frac{\mu\text{m}}{\text{s}}.
$$
For the soma and the growth cones   we choose half of the maximal amount of vesicles as typical values, i.e., $\tilde \Lambda_{\text{som}} = 3000\,\text{vesicles}$, $\tilde \Lambda_{j} = 200\,\text{vesicles}$, $j=1,2$. 
We obtain 
   \begin{equation*}
%    \label{eq:ode_scaled}
    \begin{aligned}
  &  \bar \Lambda'_{\text{som}}(t)= \kappa_{\text{som}}\sum_{j=1,2}  \left[\left(1-\frac{\bar \Lambda_{\text{som}}(t)}{2}\right)\,\bar f_{-,j}(t,0) -  \frac{\bar \Lambda_{\text{som}}(t)}{2}\,\bar f_{+,j}(\bar t,0)\left(1-\frac{\bar \rho_j(t,0)}{2}\right) \right] \\
    & \qquad \qquad \qquad + \kappa_\gamma\bar \gamma_{\text{prod}}(t) , \\ %\quad &&\text{at } x= 0\\
     &\bar \Lambda'_{j}(t) = \kappa_{\text{cone}}\left[\left(1-\frac{\bar \Lambda_{j}(t)}{2}\right)\,\bar f_{+,j}(\bar t, \bar L_j(\bar t))- \frac{\bar\Lambda_j(t)}{2}\,\bar f_{-,j}(\bar t, \bar L_j(t))\left(1-\frac{\bar \rho_j(\bar t, \bar L_j(t))}{2}\right)\right] \\
     & \qquad \qquad \qquad -\kappa_{h} \bar h_j(\bar\Lambda_j(t), \bar L_j(t)),
    \end{aligned}
    \end{equation*}
with $
    \kappa_{\text{som}} = \frac{\tilde t}{\tilde \Lambda_{\text{som}}}\tilde  c \tilde f,  \kappa_\gamma = \tilde \gamma \frac{\tilde t}{\tilde \Lambda_{\text{som}}},  \kappa_{\text{cone}} = \frac{\tilde t}{\tilde \Lambda_{\text{cone}}}\tilde  c \tilde f,  \kappa_{h} = \frac{\tilde t}{\tilde \Lambda_{\text{cone}}} \tilde h \chi. $
    Finally, for the scaled production function $\bar h$ in  \eqref{free-boundary} we make the choice
    %\begin{align*}
	$\bar h_j(\bar \Lambda, \bar L) = \operatorname{atan}(\bar \Lambda - \bar \Lambda_{\text{growth}})H(\bar L - \bar L_{\text{min}})$, $j=1,2$,
%\end{align*}
where $H$ is a smoothed Heaviside function. 
We have
\begin{align*}
\bar L_j'(t) = \kappa_{L}\, \bar h_j(\bar\Lambda_j(t), \bar L_j(t)), \; \text{ with } \kappa_L = \frac{\tilde t}{\tilde L}\,\tilde h.
\end{align*}

\section{Numerical studies}
\label{sect6}
We present two examples that demonstrate the capability of our model to reproduce observations in biological systems. Both start with an initial length difference of the two neurites. The first example shows that the shorter neurite can become the longer one due to a local advantage of the number of vesicles present in the growth cone while the second showcases oscillatory behaviour in neurite lengths that is observed experimentally.
Both simulations are performed in \textsc{Matlab}, using the finite volume scheme introduced in Section \ref{sec:FV}, using the parameters $\eta = 10$, $n_e = 100$, $\tau = 1\mathrm{e}{-4}$. We chose $T = 9$ (corresponding to $18$ hours) as a maximal time of the simulation, yet when a stationary state is reached before (measured by the criterium $\|f_{\pm,j}^{(n+1)} - f_{\pm,j}^{(n)}\|_2 \le 1\mathrm{e}{-9}$), the simulation is terminated. We also set $\lambda = 0$ in all simulations.

\subsection{Fast growth by local effects}
\label{sec:experiment1}
The first example shows that an initial length deficiency of a neurite can be overcome by a local advantage of vesicles on the growth cone. In this set-up we fixed (all scaled quantities) the following initial data: $L_1^0 = 1.1$, $L_2^0 = 0.9$, $\Lambda_{\text{soma}}^0 = 1$, $\Lambda_1^0= 0.65$,  $\Lambda_2^0= 1.15$, $\bbf_1^0 = \bbf_2^0 = (0.2,0.2)$. Furthermore, the functions 
\begin{align*}
g_\pm(f_{+},f_{-}) &= 1-\frac12\left(f_{+}+f_{-}\right),\quad \alpha_+(\Lambda) = 0.05 \,\frac{\Lambda}2,\quad \alpha_-(\Lambda) = 0.1\, \frac{\Lambda}2,\\ \beta_{\pm}(\Lambda) &= 0.7 \, \left(1-\frac{\Lambda}2\right),\quad h(\Lambda, L) = \frac{\tan^{-1}(\Lambda - \Lambda_{\textup{growth}})}{1+\exp(-4\,(L - L_{\textup{min}}-0.2))},\quad v_0=0.1
\end{align*} 
were chosen in this example. {The choices of $\alpha_+$ and $\alpha_-$ are motivated as follows: both are proportional to $\Lambda / 2$ as the typical values within the soma and growth cones, respectively, are chosen to be half the maximum which means that $\Lambda / 2 = 1$ if this value is reached. Thus, relative to their capacity, the outflow rates from soma and growth cone behave similarly. Now, since we are interested in the effect of vesicle concentration in the respective growth cones, we chose a small constant in $\alpha_+$ in order to limit the influence of new vesicles entering from the soma, relative to $\alpha_-$. This is a purely heuristic choice to examine if such a local effect can be observed in our model at all.}
Figure \ref{fig:Ex1_Snapshots} shows snapshots of the simulation at different times while Figure \ref{fig:Ex1_Lengths_Concentrations_small_alpha} shows the evolution of neurite lengths and vesicle concentrations over time.
The results demonstrate that the local advantage of a higher vesicle concentration in the growth cone of the shorter neurite is sufficient to outgrow the competing neurite. Yet, this requires a weak coupling in the sense that the outflow rate at the soma is small, see the constant $0.05$ in $\alpha_+$. Increasing this value, the local effect does not prevail and indeed, the longer neurite will always stay longer while both neurites grow at a similar rate as shown in Figure \ref{fig:Ex1_Lengths_Concentrations_large_alpha}. Thus we consider this result as biologically not very realistic, in particular as it cannot reproduce cycles of extension and retraction that are observed in experiments.
\begin{figure}[htb]
    \centering
    \includegraphics[width=.99\textwidth]{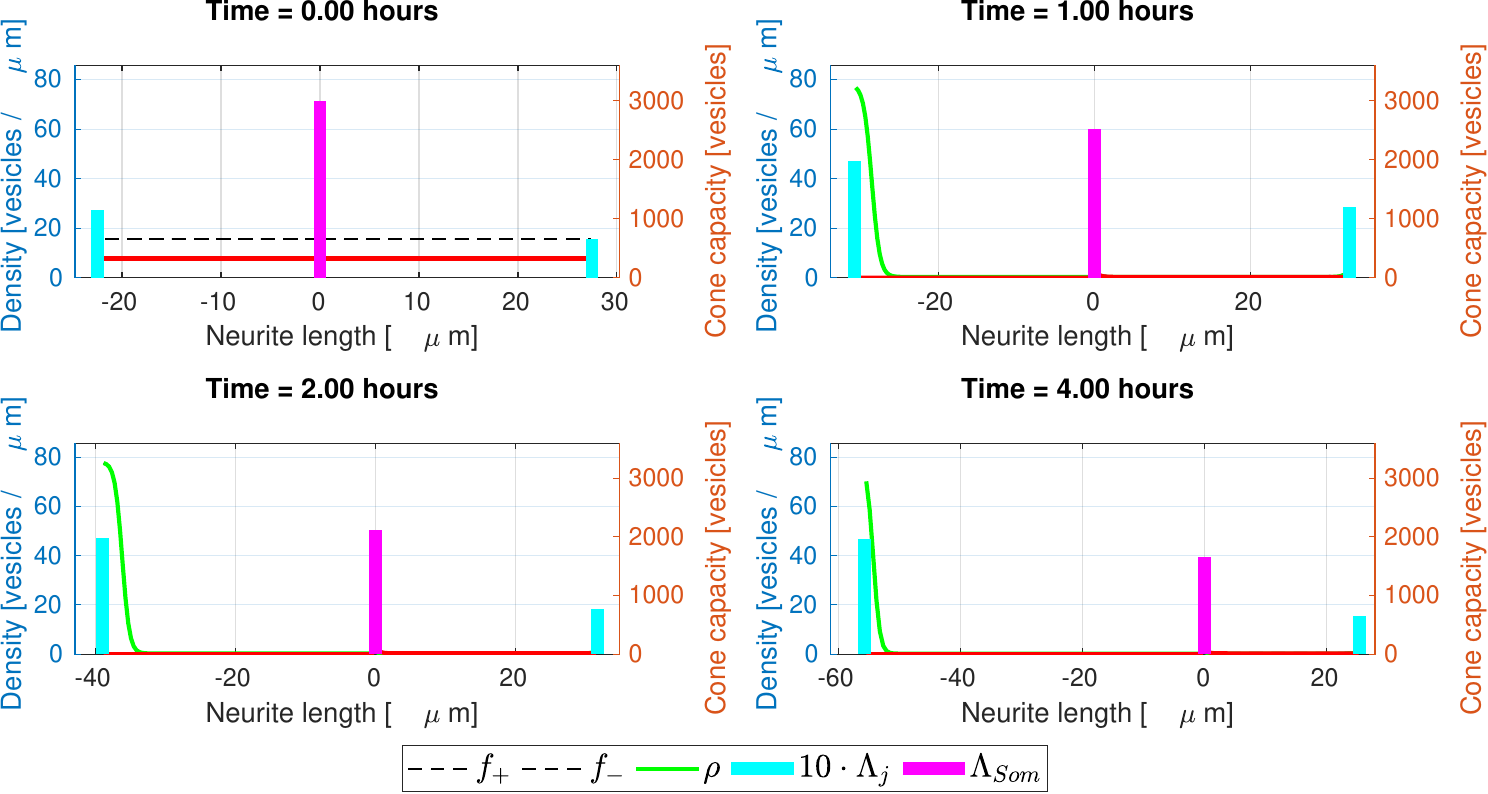}
    \caption{The vesicle densities $f_{\pm, j}$, $j=1,2$, and pool capacities $\Lambda_k$, $k\in\{\text{som},1,2\}$,  for the example from Section \ref{sec:experiment1} plotted at different time points.}
    \label{fig:Ex1_Snapshots}
\end{figure}
\begin{figure}[htb]
    \centering
    \subfloat[$\alpha_+(\Lambda) = 0.05\,\frac{\Lambda}2$\label{fig:Ex1_Lengths_Concentrations_small_alpha}]{
    \includegraphics[width=0.8\textwidth]{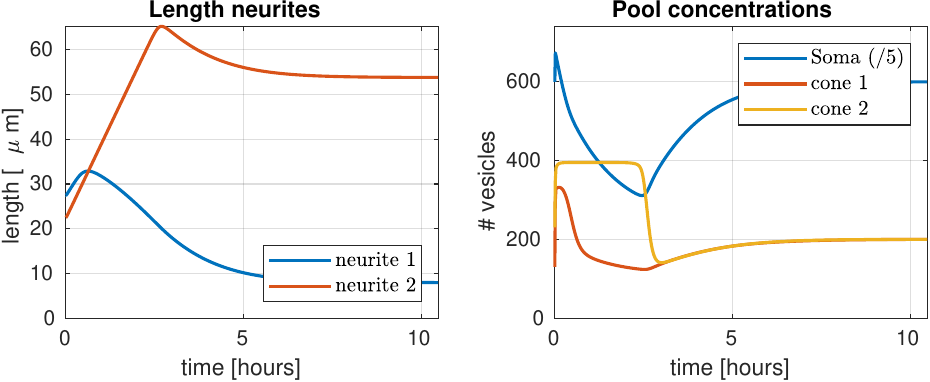}
    }\\
    \subfloat[$\alpha_+(\Lambda) = \frac{\Lambda}2$\label{fig:Ex1_Lengths_Concentrations_large_alpha}]{
    \includegraphics[width=0.8\textwidth]{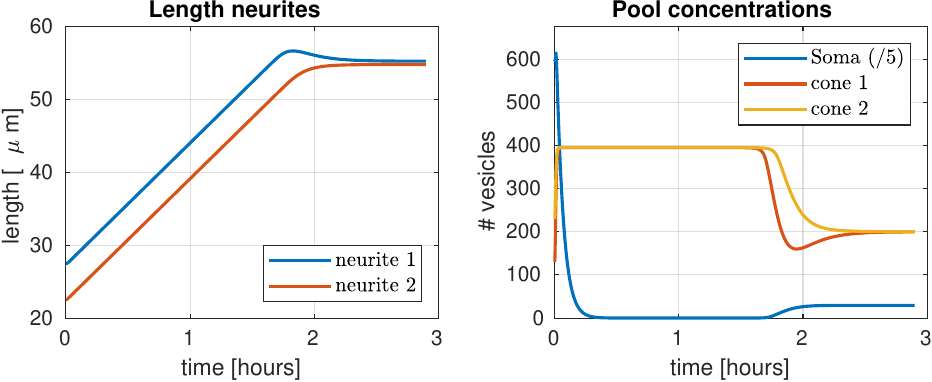}
    }
    \caption{The neurite lengths $L_j$, $j=1,2$, and pool capacities $\Lambda_k$, $k\in\{\text{som},1,2\}$, for the example from Section \ref{sec:experiment1} plotted over time.}
    \label{fig:Ex1_Lengths_Concentrations}
\end{figure}
%\begin{figure}[htb]
%    \centering
%    
%    \caption{The neurite lengths $L_i$, $i=1,2$ and pool capacities $\Lambda_j$, $j\in\{\text{soma},1,2\}$ for the example from Section \ref{sec:experiment1} with $\alpha_+(\Lambda) = \frac\Lambda2$ plotted over time.}
%    \label{fig:Ex2_Lengths_Concentrations}
%\end{figure}

\subsection{Oscillatory behaviour due to coupling of soma outflow rates to density of retrograde vesicles}
\label{sec:experiment2}
In order to overcome the purely local nature of the effect in the previous example, it seems reasonable to include effects that couple the behaviour at the growth cones to that of the soma via the concentrations of vesicles in the neurites. We propose the following two mechanisms: first, we assume that a strongly growing neurite is less likely to emit a large number of retrograde vesicles as it wants to use all vesicles for the growth process. In addition, we assume that the soma aims to reinforce strong growth and is doing so by measuring the density of arriving retrograde vesicles. The lower it becomes, the more anterograde vesicles are released. Such behaviour can easily be incorporated in our model by choosing
\begin{align*}
g_+(f_+, f_-) &= \left(\sqrt{\max(0,1-3\,f_-)^2 + 0.1}+0.5\right)\,\left(1-\frac12\,(f_++f_-)\right),\\
\alpha_+(\Lambda) &= 0.6\frac{\Lambda}2,\quad \alpha_-(\Lambda)=(1-\frac\Lambda2)\,\frac\Lambda2,\quad v_0=0.04.
\end{align*}
The remaining functions are defined as in Section~\ref{sec:experiment1}.
The initial data in this example are $L_1^0 = 1.1$, $L_2^0 = 1$, $\Lambda_{\textup{som}}^0 = 1$, $\Lambda_{1}^0=\Lambda_2^0 = 0.9$, $f_{\pm, 1}^0 = f_{\pm,2}^0 = 0$.

The results are presented in Figures \ref{fig:Ex3_Snapshots} (snapshots) and \ref{fig:Ex3_Lengths_Concentrations} and are rather interesting: first, it is again demonstrated that the shorter neurite may outgrow the larger one. Furthermore, as a consequence of the non-local coupling mechanism, {the model is able to reproduce the oscillatory cycles of retraction and growth that are sometimes observed, see e.\,g.\ \cite{Cooper2013_MechanismsCellMigration,Winans2016_ActinWaves}. Also the typical oscillation period of 2-4 hours observed in \cite[Figure 1]{Winans2016_ActinWaves} can be confirmed in our computation.} Finally, the model predicts one neurite being substantially longer than the other which one might interpret as axon and dendrite.
\begin{figure}[htb]
    \centering
    \includegraphics[width=.99\textwidth]{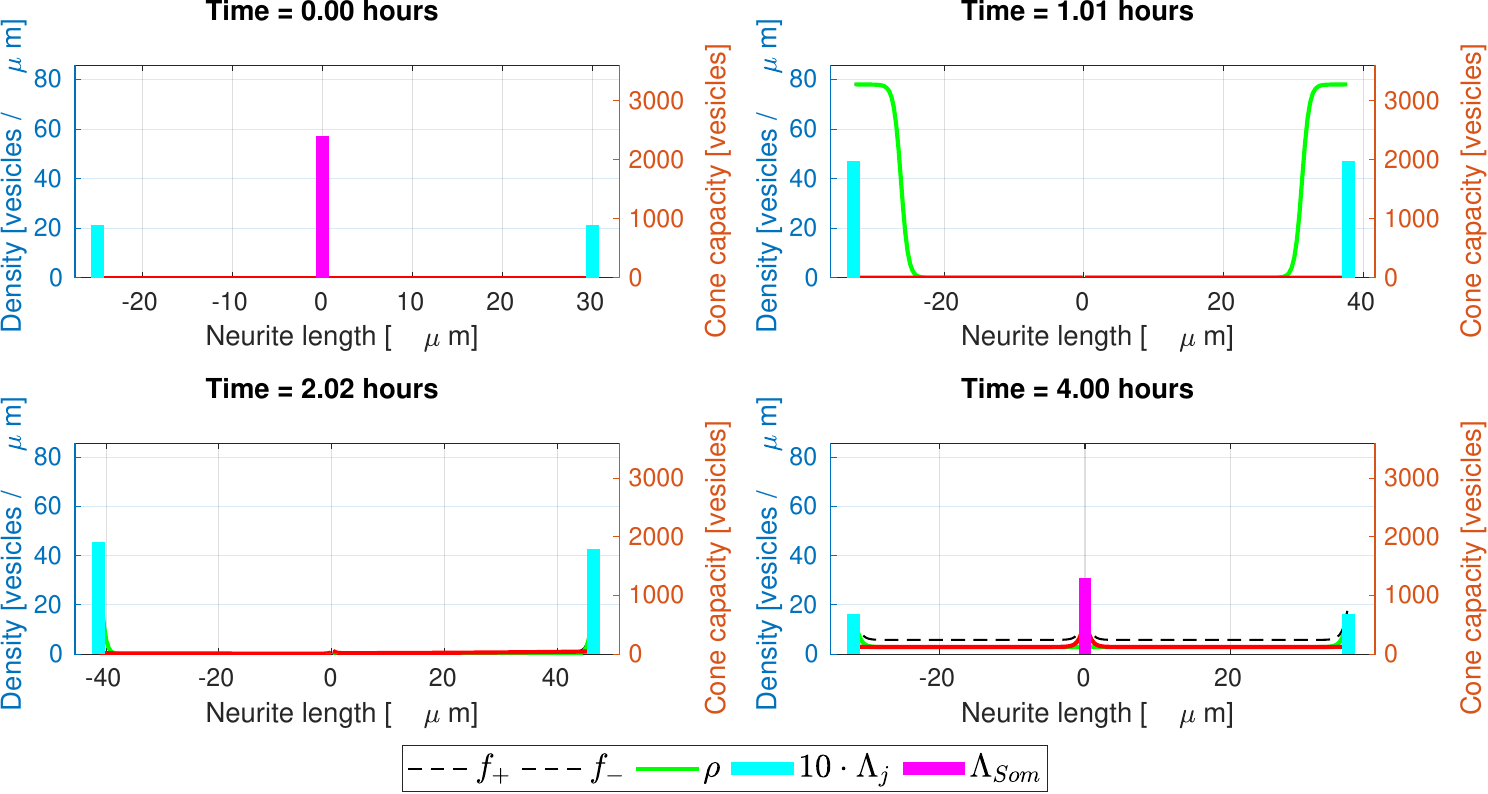}
    \caption{The vesicle densities $f_{\pm,j}$, $j=1,2$, and pool capacities $\Lambda_k$, $k\in\{\text{som},1,2\}$, for the example from Section \ref{sec:experiment2} plotted at different time points.}
    \label{fig:Ex3_Snapshots}
\end{figure}
\begin{figure}[htb]
    \centering
    \includegraphics[width=0.8\textwidth]{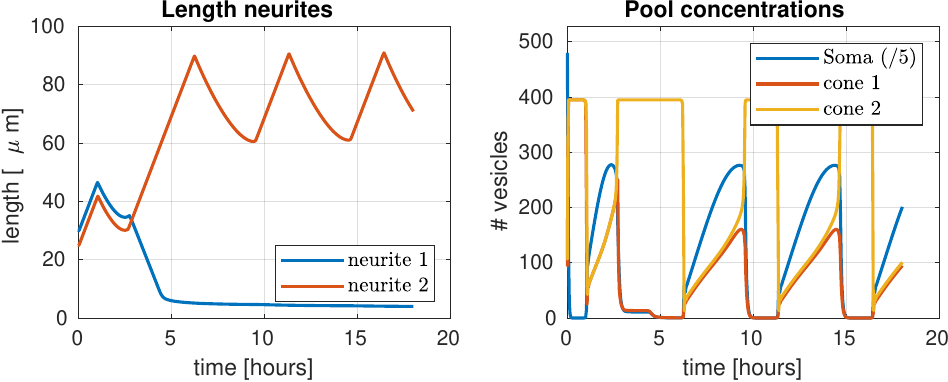}
    \caption{The neurite lengths $L_j$, $j=1,2$, and pool capacities $\Lambda_k$, $k\in\{\text{soma},1,2\}$, for the example from Section \ref{sec:experiment2} plotted over time.}
    \label{fig:Ex3_Lengths_Concentrations}
\end{figure}

\section{Conclusion \& Outlook}\label{sect7}
We have introduced a free boundary model for the dynamics of developing neurons based on a coupled system of partial- and ordinary differential equations. We provided an existence and uniqueness result for weak solutions and also presented a finite volume scheme in order to simulate the model. {Our results show that the model is able to reproduce behaviour such as retraction-elongation cycles on scales comparable to those observed in experimental measurements as shown in Section \ref{sec:experiment2}. On the other hand, the numerical results show that the density of vesicles within the neurites is, for most of the time, rather small. Thus, the effect of the non-linear transport term that we added may be questioned and indeed, rerunning the simulations with linear transport yields rather similar results. It remains to be analysed if such low vesicle densities are biologically reasonable and thus  offer the opportunity to simplify the model.}
%
%The in silico experiments we perform demonstrate the capability of the model to produce important effect observed in real experiments, based on a conjectured coupling mechanism between growth cones and soma.

A further natural question that arises at this point is what can be learned form these results. We think that while the transport mechanisms within the neurites as well as the growth and shrinkage are reasonable and fixed (up to the discussion about linear vs. non-linear transport above), most of the behaviour of the model is encoded in the coupling via the boundary conditions. These, on the other hand, allow for a large variety of choice out of which it will be difficult to decide which is the one actually implemented in a real cell. Thus, as a next step for future work, we propose to consider these couplings as unknown parameters that need to be learned using experimental data that come from experiments. We are confident that this will allow to identify possible interactions between soma and growth cones and will give new insight into the actual mechanisms at work.

Finally we remark that as our  model only focuses on the role of vesicle transport, many other effects are neglected and clearly our approach is nowhere near a complete description of the process of neurite outgrowth. To this end, we plan to extend our model further in the future, adding effects  such as the role of microtubule assembly as well as chemical signals, which are neglected so far.

\section*{Acknowledgments}
{\small GM acknowledges the support of DFG via grant MA 10100/1-1 project number 496629752, JFP via grant HE 6077/16-1 (eBer-22-57443).  JFP thanks the DFG for support via the Research Unit FOR 5387 POPULAR, Project No. 461909888. We thank Andreas P\"uschel (WWU M\"unster) for valuable discussions on the biological background. In addition, we are very grateful to the comments of the anonymous referees.}

\section*{Competing interests} The author(s) declare none.

\section*{Appendix}

For convenience of the reader we state \cite[Theorem 5.3]{Deimling} about invariant regions of solutions of ODEs.
\begin{theorem}\label{thm:invariant_deimling}
Let $X$ be a real normed linear space, $\Omega \subset X$ an open set and $D \subset X$ a distance set with $D \cap \Omega \neq \emptyset$. Let $f:(0, a) \times \Omega \rightarrow X$ be such that
\begin{itemize}
\item[(A1)]
$$
\begin{aligned}
& (f(t, x)-f(t, y), x-y)_{+} \leq \omega(t,|x-y|)|x-y| \\
& \quad \text { for } x \in \Omega \backslash D, y \in \Omega \cap \partial D, t \in(0, a),
\end{aligned}
$$
where $\omega:(0, a) \times \mathbb{R}^{+} \rightarrow \mathbb{R}$ is such that $p(t) \leq 0$ in $(0, \tau) \subset(0, a)$ whenever $\rho:[0, \tau) \rightarrow \mathbb{R}^{+}$is continuous, $\rho(0)=0$ and $D^{+} \rho(t) \leq \omega(t, \rho(t))$ for every $t \in(0, \tau)$ with $\rho(t)>0$ (where $D^+$ denotes the one-sided derivative with respect to $t$).
\item[(A2)] If $x \in \Omega \cap \partial D$ is such that the set of outward normal vectors $N(x)$ is non-empty and % $ \neq \emptyset$ and %then either
$$
%\lim _{\lambda \rightarrow 0+} \lambda^{-1} \rho(x+\lambda f(t, x), D)=0 \quad\text { or }\quad 
(f(t, x), \nu)_{+} \leq 0
$$
for all $\nu \in N(x)$ and $t \in(0, a)$.
\end{itemize}
Then $D \cap \Omega$ is forward invariant with respect to $f$, i.e., any continuous $x:[0, b) \rightarrow \Omega$, such that $x(0) \in D$ and $x^{\prime}=f(t, x)$ in $(0, b)$, satisfies $x(t) \in D$ in $[0, b)$.
\end{theorem}
\bibliographystyle{siamplain}
\bibliography{references}
\end{document}